\numberwithin{equation}{section}
\newtheorem{thm}{Theorem}[section]
\newtheorem{lem}[thm]{Lemma}
\newtheorem{defn}[thm]{Definition}
\newtheorem{rmk}[thm]{Remark}
\def\MB{\mathbb}
\def\MC{\mathcal}
\author[S. Hong]{Sunghyun Hong}
\email{shhong7523@kaist.ac.kr}
\author[S. Kwon]{Soonsik Kwon}
\email{soonsikk@kaist.edu}
\address{Department of Mathematical Sciences, Korea Advanced Institute of Science and Technology, 
 291 Daehak-ro, Yuseong-gu,
 Daejeon, 34141, Korea}
\keywords{Coupled KdV system, Hamiltonian system, nonsqueezing property, normal form}
\subjclass[2010]{Primary  35Q53}
\begin{document}

\title[Nonsqueezing property of the coupled KdV type system]{Nonsqueezing property of the coupled KdV type system without Miura transform}
\begin{abstract}
We prove the nonsqueezing property of the coupled Korteweg-de Vries (KdV) type system. Relying on Gromov's nonsqueezing theorem for finite dimensional Hamiltonian systems, the argument is to approximate the solutions to the original infinite dimensional Hamiltonian system by a frequency truncated finite dimensional system, and then the nonsqueezing property is transferred to the infinite dimensional system. This is the argument used by Bourgain \cite{Bourgain:1994tr} for the 1D cubic NLS flow, and Colliander et. al. \cite{Colliander:2005vv} for the KdV flow. One of main ingredients of \cite{Colliander:2005vv} is to use the Miura transform to change the KdV flow to the mKdV flow. In this work, we consider the coupled KdV system for which the Miura transform is not available. Instead of the Miura transform, we use the method of the normal form via the differentiation by parts. Although we present the proof for the coupled KdV flow, the same proof is applicable to the KdV flow, and so we provide an alternative simplified proof for the KdV flow.
\end{abstract}
\thanks{} 

\maketitle

\section{Introduction}
We consider a Hamiltonian dynamics property, the symplectic nonsqueezing, of the coupled KdV type system,
\begin{equation*}
\left\{\begin{array}{ll}\label{eq:Coupled KdV system}\tag{CKdV}
u_t + u_{xxx} + \frac{1}{2}\left(vv\right)_x = 0 \\
v_t + v_{xxx} + \left(uv\right)_x =0 \\
\left. \left(u,v\right) \right|_{t=0}= \left(u_0\left(x\right), v_0\left(x\right)\right), &\left(u_0,v_0\right) \in H^{-1/2}\left(\MB{T}\right) \times H^{-1/2}\left(\MB{T}\right),
\end{array}
\right.
\end{equation*}
where $\left(x,t\right) \in \MB{T} \times \MB{R} = \left[0,2\pi\right) \times \MB{R}$, and $u$, $v$ are real valued functions.  (\ref{eq:Coupled KdV system}) is a special version of the coupled KdV system,
\begin{equation}\label{eq:Generalized Coupled KdV system}
\begin{cases}{} 
u_t + a_{11} u_{xxx} + a_{12}v_{xxx} + b_1 u u_{x} +b_2 uv_{x}+b_3 u_{x}v +b_4 v v_{x}= 0 \\
v_t + a_{21} u_{xxx} + a_{22}v_{xxx} + b_5 u u_{x} +b_6 uv_{x}+b_7 u_{x}v +b_8 v v_{x}= 0,
\end{cases}
\end{equation}
where $u$,$v$ are real valued functions and $A=\left[\begin{matrix} a_{11} & a_{12} \\ a_{21} & a_{22} \end{matrix}\right]$ is self-adjoint and non-singular. By diagonalization of $A$, we can reduce (\ref{eq:Generalized Coupled KdV system}) to
\begin{equation}\label{eq:Diagonalize Coupled KdV system}
\begin{cases}{} 
u_t +  u_{xxx} + b_1 u u_{x} +b_2 uv_{x}+b_3 u_{x}v +b_4 v v_{x}= 0 \\
v_t  + \alpha v_{xxx} + b_5 u u_{x} +b_6 uv_{x}+b_7 u_{x}v +b_8 v v_{x}= 0,
\end{cases}
\end{equation}
where $\alpha \not =0$ on the same domain. There are many examples of this type of systems, such as the Majda-Biello system,
\begin{equation}\label{eq:Majda-Biello system}
\begin{cases}{} 
u_t +  u_{xxx} + \frac{1}{2}\left(vv\right)_{x}= 0 \\
v_t + \alpha v_{xxx} + \left(uv\right)_{x}= 0. 
\end{cases}
\end{equation}
The KdV type equations can be seen as examples of nonlinear dispersive equations or Hamiltonian systems. If one consider them on a compact domain, due to lack of dispersion, it is better to see them as Hamiltonian systems. They have studied by many authors for both periodic and nonperiodic settings. The studies of the well-posedness have done via local smoothing estimates and Bourgain's $X^{s,b}$ analysis \cite{KENIG:1993ts, Bourgain:1993cl, KENIG:1996tq}. In the low regularity below $L^2$ the global well-posedness is obtained by I-method \cite{Colliander:2003td}. The Majda-Biello system \eqref{eq:Majda-Biello system} is an example of the coupled KdV system. As an extension of results of the KdV equations, the Cauchy problem of the Majda-Biello was well studied by Oh \cite{Oh:2009uu, Oh:2009ux, Oh:2009um}. More precisely, the local well-posedness and the almost surely global well-posedness were proved in \cite{Oh:2009ux, Oh:2009um}, respectively. In \cite{Oh:2009uu}, the global well-posedness of (\ref{eq:Majda-Biello system}) in $H^{s}\left(\MB{T}\right) \times H^{s}\left(\MB{T}\right)$ for $s \ge -1/2$, when $\alpha=1$, was proved (moreover, Oh \cite{Oh:2009uu} obtained the global well-posedness of (\ref{eq:Majda-Biello system}) in $H^{s}\left(\MB{T}\right) \times H^{s}\left(\MB{T}\right)$, $s \ge \tilde s$, where $\tilde s = \tilde s\left(\alpha \right) \in \left( \left. 5/7,1 \right] \right.$, and $\alpha$ satisfies the certain Diophantine condition). Note that the phase space should be $H^{-1/2}(\MB{T}) \times H^{-1/2}(\MB{T})$ to consider the solution flow of \eqref{eq:Coupled KdV system} as a symplectic map. Recently, Guo, Simon and Titi \cite{Guo:2013up} proved the unconditional well-posedness of (\ref{eq:Coupled KdV system}) by the differentiation by parts. \\
\\
The purpose of this paper is to show the symplectic nonsqueezing property of the solution flow of \eqref{eq:Coupled KdV system}. The Lebesgue measure is a typical invariant of a symplectic transform. In \cite{Gromov:1985ww}, Gromov discovered another invariant of a symplectic transform which is called Darboux width. Later, Hofer and Zehnder \cite{Hofer:2011vo} developed the theory of the symplectic capacity. Moreover, Kuksin \cite{Kuksin:1995ue} extended the symplectic capacity to Hamiltonian PDEs. 
The main idea of \cite{Kuksin:1995ue} is that one can approximate the solution flow of the given Hamiltonian PDE as a finite dimensional symplectic map on the phase space. 
Concrete examples were studied by Bourgain \cite{Bourgain:1994tr} for the 1D cubic nonlinear Schr\"odinger equation (NLS), and Colliander et al. \cite{Colliander:2005vv} for the KdV equation. Recently, Roum\'egoux \cite{Roumegoux:2010sn} also proved nonsqueezing for the BBM equation. Mendelson \cite{Mendelson:2014vh} proved the nonsqueezing property of the Klein-Gordon equation on $\MB{T}^3$ via a probabilistic approach.\\ 
\\
As mentioned above, Bourgain \cite{Bourgain:1994tr} proved the nonsqueezing property of the 1D cubic NLS on $L^2_x\left(\MB{T}\right)$ space, and the basic strategy in \cite{Bourgain:1994tr} was an approximation by a finite dimensional truncated flow. The main step is to approximate the 1D cubic NLS flow which is the flow of the infinite dimensional Hamiltonian system by a frequency truncated finite dimensional system. Then due to Gromov's nonsqueezing of finite dimensional Hamiltonian systems, we have the nonsqueezing property of the truncated flow, and the result is transferred to the infinite dimensional NLS flow. Note that the truncated flow should be a symplectic map. Thus, the main here is to find a good such frequency truncation. The 1D cubic NLS is turned out to be well-behaved with the frequency truncations. Indeed,  Bourgain used a basic (or a sharp) frequency truncation, and $X^{s,b}$ space to apply this argument. Later, this argument extended by Colliander et al. \cite{Colliander:2005vv} for the KdV flow on its phase space $H^{-1/2}_x\left(\MB{T}\right)$. In \cite{Colliander:2005vv}, there are two additional ingredients. Firstly, it turned out that the sharp frequency truncation is not working efficiently. They provided a counterexample that a sharp truncation does not approximate the original flow. Instead, they use a smooth truncation to resolve this problem. Secondly, they use the Miura transform to change the KdV flow to the mKdV flow. In fact, they proved the approximation by the truncated flow for the mKdV flow, and using the Miura transform and the inverse of it, concluded the approximation for the KdV flow.  \\
The main goal of our work is to show the second ingredient, the Miura transform, is not necessary but can be replaced by more general technique, so called the normal form. In fact, the Miura transform is a special feature of the KdV flow due to integrability, and so it not widely applicable. Indeed, the system \eqref{eq:Coupled KdV system} does not enjoy the Miura transform. Although we presented the proof for (CKdV), the same proof works for the KdV flow, and so we think this provides an alternative simplified proof of the result in \cite{Colliander:2005vv}.\\
\\
The method of the normal form via the differentiation by parts first introduced by Babin, Ilyin and Titi \cite{Babin:2011wb} for the unconditional well-posedness of the KdV equation on $L^2(\MB{T})$, in which the normal form replaces the use of $X^{s,b}$ spaces. This argument is extended to other equations \cite{Kwon-Oh,Guo-Kwon-Oh}. 
Also, Erdogan and Tzirakis \cite{Erdogan:2012gs} used this method with $X^{s,b}$ multilinear estimates to show the global smoothing for the periodic KdV equation. The method of normal form is a way to detect and cancel out the nonresonancy in the nonlinear term. In general, if the characteristic surface is curved, then from the dispersion relation there is no quadratic resonance. Thus, by taking the normal form, the equation is changed to a cubic equation with quadratic boundary terms. See the detail in Section~\ref{Section: proof of different flow}. In \cite{Colliander:2005vv}, we observe that the role of the Miura transform is to change the KdV equation to the mKdV equation to do analysis for trilinear nonlinearity. Thus, we have thought this could be replaced by the normal form method. Note that in this example, both the Miura transform (in \cite{Colliander:2005vv}) and the method of normal form do not utilize full information of \emph{integrability} of the KdV flow. Thus, the method of normal form is more widely applicable to nonintegrable equations. \\
\\
The rest of paper is organized as follows: In Section \ref{Sec2}, we present theorems for the nonsqueezing property. In Section~\ref{Sec3}, we prove lemmas of bilinear and trilinear estimates in $X^{s,b}$ space setting. In Section~\ref{Section: proof of different flow}, we apply the differentiation by parts to the equation (\ref{eq:Coupled KdV system}) and prove key theorems using multilinear estimates.
 

\subsection*{Notations} $ $ \\
For each dyadic number $N$, we denote the Littlewood-Paley projection by
\begin{align*}
\widehat{P_Nu}\left(k\right) := 1_{N \le \left|k\right| <2N}\left(k\right) \hat u_k, \\
\widehat{P_{\le N} u}\left(k \right) := 1_{\left|k\right| \le N}\left(k\right) \hat u_k, \\
\widehat{P_{\ge N} u}\left(k \right) := 1_{\left|k\right| \ge N}\left(k\right) \hat u_k,
\end{align*}
where $1_{\Omega}$ is a characteristic function on $\Omega$. For positive real numbers $x, y$, $x \lesssim y$ denotes $x \le Cy$ for some $C >0$, and $x \sim y$ means $x \lesssim y$ and $y \lesssim x$. We also denote $f = \MC{O}(g)$ by $f \lesssim g$ for positive real valued functions $f$ and $g$. Moreover, $x \ll y$ denotes $x \le cy$ for some small positive constant $c$.\\
\\

\section{Setting and Statement}\label{Sec2}

We consider (\ref{eq:Coupled KdV system}) for simplicity of the argument. Denote  $S_{CKdV}\left(t\right)$ be the nonlinear solution flow of (\ref{eq:Coupled KdV system}). The system (\ref{eq:Coupled KdV system}) enjoys several conservation laws,
\begin{align*}
E_1 = \int_{\MB{T}} u dx, ~ E_2 = \int_{\MB{T}} v dx, \label{eq:mean} \\
M\left(u,v\right) = \int_{\MB{T}} u^2 +v^2 dx,
\end{align*}
and 
\begin{equation}\label{eq:Hamiltonian}
H\left(u,v\right) = \frac{1}{2}\int_{\MB{T}} u_x^2 + v_x^2 - uv^2 dx.
\end{equation}
Especially, (\ref{eq:Hamiltonian}) is the Hamiltonian, i.e., the system (\ref{eq:Coupled KdV system}) has Hamiltonian structure with respect to (\ref{eq:Hamiltonian}). We denote the spatial Fourier transform and the inverse Fourier transform by
\begin{align*}
\MC{F}_x \left(u\right) &= \hat u_k =  \int _{\MB{T}} e^{-ikx}u\left(x\right) dx, \\
 u \left(x \right) &=  \int  e^{ikx}\hat u\left(k\right) dk:=\frac{1}{2 \pi}\sum_{k \in \MB{Z}} \hat u_k e^{ikx}.
\end{align*}
We use the spatial Sobolev space
\begin{equation*}\label{eq:H^s space}
\left\|u\right\|_{H^s_x} = \left\|\left<k\right>^s \hat u\right\|_{L^2_k} := \frac{1}{\left(2\pi\right)^{1/2}} \left(\sum_{k \in \MB{Z}} \left<k\right>^{2s}\left|\hat u\right|^2\right)^{1/2},
\end{equation*}
where $s \in \MB{R}$ and  $\left<k\right> = \left(1+\left|k\right|^2\right)^{1/2}$. Mostly, we work on the mean zero $H^s$ space as follows,
\begin{equation*}
H^s_0 = \left\{u \in H^s : \frac{1}{2\pi} \int_{\MB{T}} u =0 \right\}  ~\text{and}~ \left\|u\right\|_{H_0^s} :=  \left\|\left<k\right>^s \hat u(k)\right\|_{L^2_k}.
\end{equation*}
Since $E_1$ and $E_2$ are preserved quantities, the function space $H^{s}_0 \times H^s_0$ is well-suited for the solution to (\ref{eq:Coupled KdV system}). Note that due to the Galilean transform, one can switch from mean zero solutions to general mean solutions. \\
The equation (CKdV) is a Hamiltonian PDE associated with Hamiltonian (\ref{eq:Hamiltonian}). More precisely, we can write (\ref{eq:Coupled KdV system}) as
\begin{equation*}
\begin{cases}{}
u_t = \nabla_{\omega,u} H\left(u\left(t\right),v\left(t\right)\right) \\
v_t = \nabla_{\omega,v} H\left(u\left(t\right),v\left(t\right)\right), \\
\end{cases}
\end{equation*}
where 
\begin{equation*}
\begin{cases}{}
\omega\left(h,\nabla_{\omega,u}H\left(u\left(t\right),v\left(t\right)\right)\right) := \left. \frac{d}{d\varepsilon}\right|_{\varepsilon=0} H\left(u+\varepsilon h, v\right) \\
\omega\left(h,\nabla_{\omega,v}H\left(u\left(t\right),v\left(t\right)\right)\right) := \left. \frac{d}{d\varepsilon}\right|_{\varepsilon=0} H\left(u, v+\varepsilon h\right),
\end{cases}
\end{equation*}
and 
\begin{equation*}
\omega\left(u,v\right) := \int_{\MB{T}} u \partial^{-1}_x v dx.
\end{equation*}
Thus, we say 
\begin{equation}\label{eq:symplectic form}
\omega_H \left(\left(u,u'\right),\left(v,v'\right)\right)= \omega\left(u,v\right)+ \omega\left(u',v'\right)
\end{equation}
the symplectic form associated with (\ref{eq:Hamiltonian}). Thus, a solution flow of (\ref{eq:Coupled KdV system}) is the Hamiltonian flow in $(H^{-1/2}_0\left(\MB{T}\right) \times H^{-1/2}_0\left(\MB{T}\right), \omega_H)$ corresponding to (\ref{eq:Hamiltonian}). Note that the system is globally well-posed on its phase space $H^{-1/2}\left(\MB{T}\right) \times H^{-1/2}\left(\MB{T}\right)$, and so the solution flow from data $(u_0,v_0)$ to $(u(t),v(t))$ is a symplectic map at any time $t$. \\
\\
Now, we discuss the nonsqueezing theorem. We first recall Gromov's finite dimensional nonsqueezing theorem. 
\begin{thm}[Finite dimensional nonsqueezing theorem]\label{thm:Finite nonsqueezing}
Let $\MC{S}$ be the symplectic map on the $2n$-dimensional phase space. Let $B_R$, $C_{k,r}$ denote a ball with radius $R$, and a cylinder with radius $r$ at $k$-th component, respectively. If
\begin{equation*}
\MC{S}\left(B_R\right) \subseteq C_{k,r},
\end{equation*} 
then $r \ge R$.
\end{thm}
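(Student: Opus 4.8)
Since this is Gromov's celebrated nonsqueezing theorem, I would invoke it as a black box and only indicate the idea of proof. The plan is to argue by contradiction: assume $r<R$ and derive a contradiction from a symplectic invariant that measures the ``width'' of a domain rather than its volume. The cleanest route is via a symplectic capacity, i.e.\ an assignment $c(U,\omega)\in[0,\infty]$ to symplectic manifolds that is monotone under symplectic embeddings ($U\hookrightarrow V\Rightarrow c(U)\le c(V)$), conformal ($c(U,\lambda\omega)=\lambda\,c(U,\omega)$), and normalized by $c(B_R)=c(C_{k,R})=\pi R^2$. Granting such a $c$, the hypothesis $\MC{S}(B_R)\subseteq C_{k,r}$ and the fact that $\MC{S}$ is a symplectomorphism give at once $\pi R^2=c(B_R)=c(\MC{S}(B_R))\le c(C_{k,r})=\pi r^2$, whence $R\le r$.

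The substance lies entirely in constructing $c$ and, above all, in the \emph{lower} bound $c(C_{k,R})\ge\pi R^2$ for the cylinder; the matching upper bound is elementary, coming from explicit Hamiltonian flows supported near the cylinder. I would carry the lower bound out through the Hofer--Zehnder capacity, defined by an action minimax over admissible, compactly supported Hamiltonians on the domain: its nonvanishing on the cylinder reduces to a variational argument producing a nonconstant fast periodic orbit whose action is bounded below by $\pi R^2$.

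As an alternative --- Gromov's original argument --- I would compactify the $k$-th symplectic plane. Writing the phase space as $\MB{C}\times\MB{C}^{n-1}$ with the first factor the $k$-th plane, one embeds the cylinder $C_{k,r}$ symplectically into $S^2\times\MB{C}^{n-1}$, where the sphere carries total area $\pi r^2+\varepsilon$, and then studies $J$-holomorphic spheres in the class $[S^2\times\mathrm{pt}]$ for an almost complex structure $J$ tamed by the symplectic form and compatible with the image $\MC{S}(B_R)$. Such a sphere through any prescribed point exists for the split structure, and it persists for $J$ by Gromov compactness, bubbling being ruled out by the homology class together with positivity of intersections. Choosing the sphere through the center of the embedded ball and invoking the monotonicity inequality --- a $J$-holomorphic curve through the center of a Darboux ball of radius $R$ has symplectic area at least $\pi R^2$ --- yields $\pi R^2\le\pi r^2+\varepsilon$, and letting $\varepsilon\downarrow 0$ closes the argument.

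The hard part, common to both routes, is of the same nature: producing a ``holomorphic object'' --- a periodic orbit of controlled action, or a $J$-holomorphic sphere of controlled area --- and bounding its symplectic area from below, which requires Fredholm theory and Gromov compactness for pseudoholomorphic curves, or a delicate infinite-dimensional minimax scheme. Since all of this is classical, I would not reproduce it and would simply refer to \cite{Gromov:1985ww,Hofer:2011vo} for the details, using only the statement in what follows.
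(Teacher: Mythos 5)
Your treatment matches the paper's: the authors state this classical result without proof and rely on the references \cite{Gromov:1985ww, Hofer:2011vo}, exactly as you do, and your sketches of the capacity and $J$-holomorphic curve routes are accurate summaries of the standard arguments. Nothing further is needed.
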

Our strategy is to find a frequency truncated finite dimensional solution flow which is also the Hamiltonian flow, and approximate to the original flow for some sense. Moreover, once we find the finite dimensional approximation, we can transfer the nonsqueezing theorem to the original flow. \\
The first guess is a sharp frequency truncation like \cite{Bourgain:1994tr}, as an approximation of the flow. However, this is not a good approximation for (\ref{eq:Coupled KdV system}) (see Remark \ref{rmk:counter example}). Naturally, we next choose a smooth truncation like \cite{Colliander:2005vv}. More precisely, let $\phi\left(x\right)$ be a smooth even bump function supported to $\left[-N,N\right]$ which equals 1 on $\left[-N/2,N/2\right]$, and $b\left(k\right)$ be the restriction to integers of $\phi\left(x\right)$. We thus consider the smooth truncated system,
\begin{equation}\label{eq:smooth truncated system}\tag{BKdV}
\left\{\ 
\begin{aligned}
\partial_t u + \partial_{xxx} u + \frac{1}{2}B\left(\left(vv\right)_x\right) = 0 \\
\partial_t v + \partial_{xxx} v + B\left(\left(uv\right)_x\right) =0,
\end{aligned}
\right.
\end{equation}
where
\begin{equation*}
\widehat{Bu}\left(k\right) = b\left(k\right) \hat u\left(k\right).
\end{equation*}
Let $S_{BKdV}\left(t\right)$ be the solution flow of (\ref{eq:smooth truncated system}). Clearly, $S_{BKdV}\left(t\right)$ is a finite dimensional solution flow. However, $S_{BKdV}\left(t\right)$ is not a symplectic map, so we need more steps. To construct an appropriate finite dimensional symplectic map with respect to (\ref{eq:symplectic form}), we first consider a modified Hamiltonian. Let $H_N\left(u,v\right)$ be a Hamiltonian which is defined by
\begin{equation*}
H_N\left(u,v\right) := \frac{1}{2} \int u_x^2 + v_x^2 - B\left(u\right)\left(B\left(v\right)\right)^2 dx,
\end{equation*}
on $P_{\le N} H^{-1/2}_0 \left(\MB{T}\right) \times P_{\le N} H^{-1/2}_0 \left(\MB{T}\right)$. Then, we can get the appropriate truncated system by using $H_N\left(u,v\right)$. By the usual gradient with respect to (\ref{eq:symplectic form}),
\begin{equation*}
\begin{aligned}
\left.\frac{d}{d\varepsilon} H_N \left(u+\varepsilon w,v\right) \right|_{\varepsilon=0} &= \int \left(u_x w_x-B\left(w\right)\left(B\left(v\right)\right)^2\right) = \omega\left(w,-u_{xxx}-\frac{1}{2}B\left(\left(B\left(v\right)B\left(v\right)\right)_x\right)\right), \\
\left.\frac{d}{d\varepsilon} H_N \left(u,v+\varepsilon w\right) \right|_{\varepsilon=0} &= \int \left(v_x w_x-B\left(u\right)B\left(v\right)B\left(w\right)\right) = \omega\left(w,-v_{xxx}-B\left(\left(B\left(u\right)B\left(v\right)\right)_x\right)\right).
\end{aligned}
\end{equation*}
Hence, we conclude that the smooth truncated system with respect to $H_N\left(u,v\right)$ is given by
\begin{equation}\label{eq:symplectic truncated system}\tag{FKdV}
\left\{\ 
\begin{aligned}
\partial_t u + \partial_{xxx} u + \frac{1}{2}B\left(\left(B\left(v\right)B\left(v\right)\right)_x\right) = 0, \\
\partial_t v + \partial_{xxx} v + B\left(\left(B\left(u\right)B\left(v\right)\right)_x\right)=0, 
\end{aligned}
\right.
\end{equation}
for initial data $\left(u_0,v_0\right) \in P_{\le N} H^{-1/2}_0 \left(\MB{T}\right) \times P_{\le N} H^{-1/2}_0 \left(\MB{T}\right)$. Let $S_{FKdV}\left(t\right)$ be the solution flow of (\ref{eq:symplectic truncated system}). It is the finite dimensional symplectic map at any time $t$ by the construction and the global well-posedness. We now consider that $S_{FKdV}\left(t\right)$ as a candidate of the good approximation. 
\begin{rmk}\label{rmk:relation of flow}
We observe relation between \eqref{eq:smooth truncated system} and \eqref{eq:symplectic truncated system}. First of all, we apply the operator $B$ to the both sides of \eqref{eq:symplectic truncated system}. Then we can obtain the system,
\begin{equation}\label{eq:double truncated system}\tag{BBKdV}
\left\{\ 
\begin{aligned}
\partial_t u + \partial_{xxx} u + \frac{1}{2}B^2\left(\left(vv\right)_x\right) = 0 \\
\partial_t v + \partial_{xxx} v + B^2\left(\left( uv\right)_x\right)=0, 
\end{aligned}
\right.
\end{equation}
for initial data $\left(Bu_0,Bv_0\right) \in P_{\le N} H^{-1/2}_0 \left(\MB{T}\right) \times P_{\le N} H^{-1/2}_0 \left(\MB{T}
\right)$. We let $S_{BBKdV}\left(t\right)$ be the solution flow of \eqref{eq:double truncated system}, and then by the definition of $S_{FKdV}\left(t\right)$ and $S_{BBKdV}\left(t\right)$,
\begin{equation*}
BS_{FKdV}\left(t\right)\left(u_0\right) = S_{BBKdV}\left(t\right)\left(Bu_0\right).
\end{equation*}
From the definition of $B$, \eqref{eq:double truncated system} is \eqref{eq:smooth truncated system} with $B$ replaced by $B^2$. This relation will be used in the proof of the approximation to the solution flow. 
\end{rmk}
\begin{rmk}\label{rmk:counter example}
The sharp truncation (it uses $P_{\le N}$ instead of $B$ for truncation) turns out to be not a good finite approximation, due to a counterexample by \cite{Colliander:2005vv}. We consider the initial data
\begin{equation*}\label{eq:counter example}
u_0 = v_0 = \sigma^3 \cos\left(k_0x\right) + \sigma N^{1/2} \cos(Nx),
\end{equation*}
and by the similar iterating argument in \cite{Colliander:2005vv} (or consecutive substitution), we can show that the sharp truncated coupled KdV flow does not approximate the original coupled KdV flow. Since the coupled KdV flow with the same initial data can be regarded as the KdV flow, the same counterexample as in \cite{Colliander:2005vv} works in (CKdV).
\end{rmk}
\begin{rmk}
By the same argument in \cite{Colliander:2005vv, Oh:2009uu}, I-method, we can show the global well-posedness of \eqref{eq:symplectic truncated system}. See \cite{Oh:2009uu} for the detail.
\end{rmk}
So far, we have chosen the appropriate truncation and the function spaces. We now define balls and cylinders in the phase space and state the main theorem, the nonsqueezing property of the coupled KdV type system (\ref{eq:Coupled KdV system}).
\begin{defn}
Let $B^{N}_r\left(u_*\right)$ be a finite dimensional ball in $P_{\le N}{H^{-1/2}_0}$ which has radius $r$ and centered at $u_* \in P_{\le N}{H^{-1/2}_0}$. Likewise, $B^{\infty}_r\left(u_*\right)$ is an infinite dimensional ball in ${H^{-1/2}_0}$ which has radius $r$ and centered at $u_* \in {H^{-1/2}_0}$. That is,
\begin{align*}
B^{N}_r\left(u_*\right) := \left\{u \in P_{\le N}{H^{-1/2}_0} : \left\|u-u_*\right\|_{{H^{-1/2}_0}} \le r\right\}, \\
B^{\infty}_r\left(u_*\right) := \left\{u \in {H^{-1/2}_0} : \left\|u-u_*\right\|_{{H^{-1/2}_0}} \le r\right\}.
\end{align*}

For any $k \in \MB{Z}\setminus \left\{0\right\}\left(:=\MB{Z}^*\right)$, we define that $C^{N}_{k,r}\left(z\right)$ is the finite dimensional cylinder in $P_{\le N}{H^{-1/2}_0}$ which has radius $r$ and centered at $z \in \MB{C}$. Likewise, $C^{\infty}_{k,r}\left(z\right)$ is the infinite dimensional cylinder in ${H^{-1/2}_0}$ which has radius $r$ and centered at $z \in \MB{C}$. That is,
\begin{align*}
C^{N}_{k,r}\left(z\right) := \left\{u \in P_{\le N}{H^{-1/2}_0} : \left|k\right|^{-1/2}\left|\hat u_k-z\right| \le r\right\}, \\
C^{\infty}_{k,r}\left(z\right) := \left\{u \in {H^{-1/2}_0} : \left|k\right|^{-1/2}\left|\hat u_k-z\right| \le r\right\}.
\end{align*}
\end{defn}
Now we state our main theorem.
\begin{thm}\label{thm:nonsqueezing of CKdV(main thm)}
Let $k_1, k_2 \in \MB{Z}^*$, $r_1<R_1$, $r_2<R_2$ and $T>0$. In addition, $\left(u_*,v_*\right) \in H_0^{-\frac{1}{2}} \left(\MB{T}\right) \times H_0^{-\frac{1}{2}} \left( \MB{T}\right)$ and $\left(z,w\right) \in \MB{C}^2$. Then
\begin{equation*}
S_{CKdV}\left(T\right)\left(B^{\infty}_{R_1}\left(u_*\right) \times B^{\infty}_{R_2}\left(v_*\right)\right) \not \subseteq C^{\infty}_{k_1,r_1}\left(z\right) \times C^{\infty}_{k_2,r_2}\left(w\right).
\end{equation*}
In other words, there exists a global solution $S_{CKdV}\left(t\right)\left(u_0,v_0\right) \in H^{-\frac{1}{2}}_0 \times H^{-\frac{1}{2}}_0$ to \eqref{eq:Coupled KdV system} such that
\begin{equation*}
\left\|u_0-u_*\right\|_{H^{-1/2}_0} \le R_1, ~ \left|k_1\right|^{-1/2} \left|\left(S_{CKdV}\left(T\right)u_0\right) \hat{}\left(k_1\right)-z\right| > r_1,
\end{equation*}
and
\begin{equation*}
\left\|v_0-v_*\right\|_{H^{-1/2}_0} \le R_2  , ~ \left|k_2\right|^{-1/2} \left|\left(S_{CKdV}\left(T\right)v_0\right) \hat{}\left(k_2\right)-w\right| > r_2,
\end{equation*}
respectively\footnote{Obviously, $S_{CKdV}(t)$ is the flow $\MB{R} \to \MB{R}^2$. However by abuse of notation, let $S_{CKdV}(t)u_0$ and $S_{CKdV}(t)v_0$ denote the first and the second component of $S_{CKdV}(t)(u_0,v_0)$, respectively. Here and in the sequel, we use these notations for all solution flow as well.}.
\end{thm}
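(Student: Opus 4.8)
The plan is to follow the Bourgain--Colliander-et-al.\ scheme, but with the normal form replacing the Miura transform. First I would reduce Theorem~\ref{thm:nonsqueezing of CKdV(main thm)} to an approximation statement: if $S_{FKdV}(T)$ approximates $S_{CKdV}(T)$ uniformly on the relevant ball (after composing with $P_{\le N}$), then the finite dimensional nonsqueezing theorem (Theorem~\ref{thm:Finite nonsqueezing}), applied to the symplectic map $S_{FKdV}(T)$ on $P_{\le N}H_0^{-1/2}\times P_{\le N}H_0^{-1/2}$, forces a point of the image to escape the cylinder; letting $N\to\infty$ and using the approximation passes this to $S_{CKdV}(T)$. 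The center $(u_*,v_*)$ and cylinder data $(z,w)$ enter only through an affine translation, which is harmless on the finite dimensional side. So the crux is the approximation: for any $\varepsilon>0$ and $R$, there is $N$ large so that
\begin{equation*}
\sup_{\|(u_0,v_0)\|\le R}\ \bigl\| P_{\le N}S_{CKdV}(T)(u_0,v_0) - S_{FKdV}(T)(P_{\le N}u_0,P_{\le N}v_0)\bigr\|_{H_0^{-1/2}\times H_0^{-1/2}} < \varepsilon.
\end{equation*}

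To prove the approximation I would split it, as in \cite{Colliander:2005vv}, into two comparisons: (i) $S_{CKdV}$ versus $S_{BKdV}$ (the smoothly truncated, non-symplectic flow), and (ii) $S_{BKdV}$ versus $S_{FKdV}$, the latter handled through the relation in Remark~\ref{rmk:relation of flow} that $BS_{FKdV}(t)=S_{BBKdV}(t)B$, so that comparing $S_{BKdV}$ and $S_{BBKdV}$ (same equation with $B$ versus $B^2$) suffices. Each comparison is a difference-of-solutions estimate. The key point is that one cannot work directly at regularity $H^{-1/2}$ with a bare difference estimate, because the bilinear nonlinearity $(vv)_x$, $(uv)_x$ is not perturbatively controlled there. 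This is where the normal form enters: I would apply differentiation by parts (the computation deferred to Section~\ref{Section: proof of different flow}) to transform \eqref{eq:Coupled KdV system} into an equation of the schematic form $\partial_t(u+\mathcal{N}_2(u,v)) + \partial_{xxx}u = \mathcal{N}_3(u,v) + (\text{boundary terms})$, where $\mathcal{N}_2$ is a bounded bilinear correction and $\mathcal{N}_3$ is a genuinely trilinear term; the nonresonance of the quadratic interaction (the characteristic polynomial $k^3-k_1^3-k_2^3 = 3kk_1k_2$ does not vanish off the resonant set) makes $\mathcal{N}_2$ and $\mathcal{N}_3$ well-behaved in $X^{s,b}$ at $s=-1/2$. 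The multilinear estimates controlling $\mathcal{N}_2$, $\mathcal{N}_3$, and their differences are exactly the bilinear and trilinear lemmas of Section~\ref{Sec3}; granting those, the difference estimate closes by a standard contraction/bootstrap on short time intervals, iterated $O_{R,T}(1)$ times to reach time $T$.

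The main obstacle is the truncation error inside the multilinear estimates: the operators $B$ and $B^2$ do not commute with the normal-form transformation, so applying differentiation by parts to \eqref{eq:smooth truncated system} and \eqref{eq:double truncated system} produces extra commutator terms of the form $[\partial_x^{-1}, b(k)]$-type symbols. These must be shown to be $o(1)$ as $N\to\infty$ when measured against the trilinear norms — this is the analogue of the smooth-truncation subtlety identified in \cite{Colliander:2005vv}, and is precisely why a sharp cutoff fails (Remark~\ref{rmk:counter example}). I expect the bound to come from the fact that $b$ is supported in $|k|\le N$ and equals $1$ on $|k|\le N/2$, so the commutator symbols are supported in the dyadic shell $|k|\sim N$ and carry a gain from the $\langle k\rangle^{-1/2}$ weight together with the off-diagonal smallness of $b(k)-b(k_1)b(k_2)$; quantifying this gain uniformly over all frequency configurations is the technical heart of the argument and is carried out via the estimates of Section~\ref{Sec3}. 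Once the approximation holds, a final diagonal argument in $N$ — choosing $N$ so large that the escaping finite dimensional point lies within $\varepsilon$ of a genuine solution of \eqref{eq:Coupled KdV system} — completes the proof.
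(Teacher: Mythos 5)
Your overall architecture is exactly the paper's: Gromov's theorem applied to the symplectic truncated flow $S_{FKdV}(T)$ on $P_{\le N}H^{-1/2}_0\times P_{\le N}H^{-1/2}_0$, transferred to $S_{CKdV}(T)$ by an approximation at the observation frequencies, with the normal form via differentiation by parts replacing the Miura transform so that the difference estimates close at $H^{-1/2}$. The $\varepsilon$-bookkeeping you gloss over is also as in the paper (ball of radius $R_1-\varepsilon$ centered at $P_{\le N}u_*$, cylinder of radius $r_1+\varepsilon$, with $\varepsilon<(R_1-r_1)/2$), and in fact only the single Fourier mode $k_1$ (resp.\ $k_2$) of the difference needs to be controlled, which is weaker than the uniform $P_{\le N}H^{-1/2}$ bound you state (the paper only controls $P_{\le N^{1/2}}$ of the difference, and that suffices).

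There is, however, one concrete step missing from your decomposition. The relation of Remark~\ref{rmk:relation of flow} reads $BS_{FKdV}(t)(u_0)=S_{BBKdV}(t)(Bu_0)$: the data is replaced by $Bu_0$, and the escaping point $u_0$ produced by Gromov's theorem may well have frequency content in $[N/2,N]$ where $Bu_0\neq u_0$. Hence the chain $S_{CKdV}\to S_{BKdV}\to S_{BBKdV}$, all evaluated at the \emph{same} data, does not reach $S_{FKdV}(T)u_0$; one also needs a stability estimate comparing two solutions of the same flow whose initial data agree only on low frequencies (the paper's Theorem~\ref{thm:est. of different data}, proved by a local-in-time perturbation argument iterated over $[0,T]$). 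This is of the same flavor as your difference-of-flows estimates and is provable by the same multilinear machinery, but it is a logically necessary separate ingredient. A smaller point: the truncation error in the normal form is not a commutator of $B$ with $\partial_x^{-1}$ (both are Fourier multipliers and commute); it is the factor $1-b(k_1+k_2)$ appearing in the trilinear term after differentiating the truncated boundary term, whose dangerous contribution is the resonant one with $|k_1+k_2|\sim|k_3|\gtrsim N$ and output frequency $|k|\le N^{1/2}$. The paper handles it by the mean value theorem, $b(k+k_3)-b(k-k_3)=\MC{O}(|k|/N)$, together with a cancellation between the two resonant pairings — this is precisely where smoothness of $b$ is used and where the sharp cutoff fails.
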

Note that no smallness conditions are imposed on $k_i$, $r_i$, $R_i$, $\left(u_*,v_*\right)$ and $\left(z,w\right)$. \\
Our strategy is to construct a truncated solution flow which has the nonsqueezing property, and approximate to the original solution flow. Hence, we need the nonsqueezing theorem associated with the truncated solution flow (\ref{eq:symplectic truncated system}). 

\begin{lem}\label{lem:Nonsqueezing property of finite solution flow}
Let $k_1, k_2 \in \MB{Z}^*$ such that $\left|k_1\right|, \left|k_2\right| \le N$. Let $r_1<R_1$, $r_2<R_2$ and $T>0$. Furthermore, let $\left(u_0,v_0\right) \in P_{\le N}H_0^{-\frac{1}{2}} \left(\MB{T}\right) \times P_{\le N}H_0^{-\frac{1}{2}} \left( \MB{T}\right)$ and $z,w \in \MB{C}$. Then
\begin{equation*}
S_{FKdV}\left(T\right)\left(B^{N}_{R_1}\left(u_0\right) \times B^{N}_{R_2}\left(v_0\right)\right) \not \subseteq C^{N}_{k_1,r_1}\left(z\right) \times C^{N}_{k_2,r_2}\left(w\right)\end{equation*}
\end{lem}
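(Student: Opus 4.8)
The plan is to reduce Lemma~\ref{lem:Nonsqueezing property of finite solution flow} to Gromov's finite-dimensional theorem, Theorem~\ref{thm:Finite nonsqueezing}. Two things must be arranged. First, the product of balls in the source and the intersection of cylinders in the target must be cut down to a single ball and a single cylinder to which Theorem~\ref{thm:Finite nonsqueezing} literally applies. Second, the truncated phase space $(P_{\le N}H^{-1/2}_0\times P_{\le N}H^{-1/2}_0,\omega_H)$ must be identified, through a linear symplectomorphism $\Phi$ onto a standard Euclidean symplectic space $(\MB{R}^{4N},\omega_0)$, in such a way that $\Phi$ carries the balls $B^N_r$ to round Euclidean balls and the cylinders $C^N_{k,r}$ to standard cylinders over symplectic coordinate $2$-planes, with \emph{matching} normalization of the radii.

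For the second point, recall that $S_{FKdV}(T)$ is already known to be, for every $T$, a symplectomorphism of $(P_{\le N}H^{-1/2}_0\times P_{\le N}H^{-1/2}_0,\omega_H)$, being the Hamiltonian flow of $H_N$ and globally defined by the well-posedness of \eqref{eq:symplectic truncated system}. The heart of the matter is the \emph{compatibility} of the symplectic form $\omega(u,v)=\int_{\MB{T}}u\,\partial^{-1}_x v\,dx$ with the $H^{-1/2}_0$ structure. Writing $\widehat u_k=a_k+ib_k$ for a real mean-zero $u$ and using $\widehat u_{-k}=\overline{\widehat u_k}$, one computes that both $\omega$ and the homogeneous norm $\|u\|_*^2:=\tfrac1{2\pi}\sum_{k\neq0}|k|^{-1}|\widehat u_k|^2$ --- which is equivalent to $\|\cdot\|_{H^{-1/2}_0}$ on $P_{\le N}H^{-1/2}_0$ and satisfies $\|\cdot\|_{H^{-1/2}_0}\le\|\cdot\|_*$ --- decompose as orthogonal sums over the positive frequencies $k$, and on the real $2$-plane attached to the $k$-th mode they equal, respectively, $\tfrac{c}{k}$ times the standard $2\times2$ skew-symmetric matrix and $\tfrac{c}{k}$ times the identity, with the \emph{same} constant $c$ and the \emph{same} weight $|k|^{-1}$. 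Hence a suitably rescaled Fourier basis is at once orthonormal for $\|\cdot\|_*$ and a Darboux basis for $\omega$; using it in each factor produces the desired $\Phi$, which is moreover an isometry for the product $\|\cdot\|_*$-norm. Under $\Phi$ a $\|\cdot\|_*$-ball of radius $r$ is exactly a Euclidean ball of radius $r$, while, the $2$-plane carrying $\widehat u_k$ being a symplectic subspace, $C^N_{k,r}(z)$ becomes a standard cylinder over a symplectic coordinate plane whose radius is a fixed multiple of $r$ that is no larger than $r$; in particular $r_i<R_i$ forces the cylinder's Euclidean radius to be smaller than $R_i$.

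Now the first point. Since $C^N_{k_1,r_1}(z)\times C^N_{k_2,r_2}(w)\subseteq C^N_{k_1,r_1}(z)\times P_{\le N}H^{-1/2}_0$, and since the Euclidean-type ball $\{(u,v):\|u-u_0\|_*^2+\|v-v_0\|_*^2\le\min(R_1,R_2)^2\}$ is contained in $B^N_{R_1}(u_0)\times B^N_{R_2}(v_0)$ (using $\|\cdot\|_{H^{-1/2}_0}\le\|\cdot\|_*$), it suffices --- after relabeling the two components so that $R_1=\min(R_1,R_2)$ --- to show that $S_{FKdV}(T)$ does not map the product $\|\cdot\|_*$-ball of radius $R_1$ about $(u_0,v_0)$ into $C^N_{k_1,r_1}(z)\times P_{\le N}H^{-1/2}_0$. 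If it did, then conjugating by $\Phi$ and translating the ball to the origin (both symplectic operations) would yield a symplectomorphism of the standard $(\MB{R}^{4N},\omega_0)$ taking a Euclidean ball of radius $R_1$ into a standard cylinder of radius strictly less than $R_1$, contradicting Theorem~\ref{thm:Finite nonsqueezing}. This proves the lemma.

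The only step that is not pure bookkeeping is the compatibility asserted above: that one rescaled Fourier basis simultaneously diagonalizes $\omega$ and the $H^{-1/2}_0$ metric and realizes the $\widehat u_k$-plane as a symplectic subspace. This is precisely the structural reason the phase space is taken to be $H^{-1/2}_0$, paired with $\omega(u,v)=\int u\,\partial^{-1}_x v$. The remaining ingredients --- symplecticity of $S_{FKdV}(T)$, the elementary fact that a bidisc contains the round ball of radius equal to the smaller of its two radii, and Gromov's theorem itself --- are standard.
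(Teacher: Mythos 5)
Your proposal is correct and follows the same route as the paper, which disposes of this lemma in a single sentence (``Since $S_{FKdV}(T)$ is the finite dimensional symplectic map at time $T$, Lemma~\ref{lem:Nonsqueezing property of finite solution flow} is a direct consequence of Theorem~\ref{thm:Finite nonsqueezing}''). You simply supply the bookkeeping the paper leaves implicit --- the simultaneous diagonalization of $\omega$ and the homogeneous $H^{-1/2}_0$ metric in rescaled Fourier coordinates, the verification that the mode-$k$ plane is symplectic so that $C^N_{k,r}$ is a genuine symplectic cylinder of Euclidean radius at most $r$, and the reduction of the product of balls and cylinders to a single round ball inside a single cylinder --- and all of these checks are carried out correctly.
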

Since $S_{FKdV}\left(T\right)$ is the finite dimensional symplectic map at time $T$, Lemma \ref{lem:Nonsqueezing property of finite solution flow} is a direct consequence of Theorem \ref{thm:Finite nonsqueezing}. Thus, in the rest we prove that two flows, the flow of (\ref{eq:Coupled KdV system}) and (\ref{eq:symplectic truncated system}) are close for sufficiently large $N$. We show that in two steps. Firstly, we prove that solutions agreeing on low frequency data stay close at frequencies $\le N$. Secondly, we show that solutions to the truncated flow stay close to the original flow in low frequencies. The first part is written as follows,

\begin{thm}\label{thm:est. of different data}
Let $T>0$, $\varepsilon>0$, $\left(u_0,v_0\right) \in {H^{{-\frac{1}{2}}}_0} \times  {H^{{-\frac{1}{2}}}_0}$ and $\left(u'_0,v'_0\right) \in {H^{{-\frac{1}{2}}}_0} \times  {H^{{-\frac{1}{2}}}_0}$. There exists a positive integer 
\begin{equation*}
N_0(T, \varepsilon,\left\|u_0\right\|_{H^{-1/2}_0},\left\|u'_0\right\|_{H^{-1/2}_0},\left\|v_0\right\|_{H^{-1/2}_0},\left\|v'_0\right\|_{H^{-1/2}_0})
\end{equation*}
such that for all $N>N_0$, and the data satisfying $P_{\le 2N}\left(u_0,v_0\right) = P_{\le 2N}\left(u'_0,v'_0\right)$,
\begin{equation*}
\begin{aligned}
&\sup_{\left|t\right|\le T}\left\|P_{\le N}\left(S_{CKdV}\left(t\right) u_0-S_{CKdV} \left(t\right)u'_0\right)\right\|_{{H^{-1/2}_0}}  \\
&+\sup_{\left|t\right|\le T} \left\|P_{\le N}\left(S_{CKdV}\left(t\right)v_0-S_{CKdV}\left(t\right)v'_0\right)\right\|_{{H^{-1/2}_0}} \\
&\lesssim \varepsilon.
\end{aligned}
\end{equation*}
\end{thm}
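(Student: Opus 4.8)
The plan is to pass to the normal form variables obtained by the differentiation by parts of Section~\ref{Section: proof of different flow}, write down the equation satisfied by the difference of the two (transformed) solutions, and propagate the low-frequency smallness of the difference by a dyadic-in-frequency bootstrap on short time intervals using the bilinear and trilinear estimates of Section~\ref{Sec3}, finally iterating over $[-T,T]$.

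Set $(u,v)=S_{CKdV}(t)(u_0,v_0)$ and $(u',v')=S_{CKdV}(t)(u'_0,v'_0)$. By the global well-posedness the four functions stay bounded in $H^{-1/2}_0$ on $[-T,T]$ by a constant $R=R(T,\|u_0\|_{H^{-1/2}_0},\|u'_0\|_{H^{-1/2}_0},\|v_0\|_{H^{-1/2}_0},\|v'_0\|_{H^{-1/2}_0})$, and by time reversal we may assume $t\in[0,T]$. Applying the differentiation by parts of Section~\ref{Section: proof of different flow} to each solution produces normal form variables $\widetilde u=u+\Phi^u(u,v)$, $\widetilde v=v+\Phi^v(u,v)$ solving a system whose nonlinearity is cubic plus quadratic boundary terms. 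The point is that for $k=k_1+k_2$ with $k_1,k_2\neq 0$ the resonance function of each equation equals $3kk_1k_2$ (all dispersion coefficients being $1$, there is no near-resonance), so the quadratic corrections $\Phi^u,\Phi^v$ carry a weight $|kk_1k_2|^{-1}$ and are bounded, smoothing operators on $H^{-1/2}_0$, and every nonlinear term of the transformed equation has schematically the form $\partial_x\bigl(|kk_1k_2|^{-1}(\cdots)\bigr)$. Put $w_1=\widetilde u-\widetilde u'$, $w_2=\widetilde v-\widetilde v'$; then $(w_1,w_2)$ solves a system which is linear in $(w_1,w_2)$ with coefficients built from the four solutions, again with this weighted structure. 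The hypothesis $P_{\le 2N}(u_0,v_0)=P_{\le 2N}(u'_0,v'_0)$ is used exactly once, to control the transformed data: the raw difference $(u_0-u'_0,v_0-v'_0)$ is supported in $|k|>2N$, so on the band $|k|\le 2N$ the correction difference $\Phi(u_0,v_0)-\Phi(u'_0,v'_0)$ receives contributions only from interactions in which one input frequency exceeds $2N$, and the weight $|kk_1k_2|^{-1}$ then lets the bilinear estimates of Section~\ref{Sec3} give $\|P_{\le 2N}(w_1,w_2)(0)\|_{H^{-1/2}_0}\lesssim N^{-\theta}C(R)$ for some $\theta>0$, whereas $\|(w_1,w_2)(0)\|_{H^{-1/2}_0}\lesssim R$ only. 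Thus the transformed difference is quantitatively small below frequency $2N$ and merely bounded above it, and it suffices to show the band $|k|\le N$ stays small.

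Fix a short interval $I=[t_0,t_0+\delta]$ with $\delta=\delta(R,\varepsilon,T)$. For each dyadic $M\ge N$ one estimates $\|P_{\le M}(w_1,w_2)\|_{X^{-1/2,b}(I)}$ by the multilinear estimates of Section~\ref{Sec3}, splitting the difference factor in each nonlinearity as $P_{\le 2M}+P_{>2M}$: if that factor has frequency $>2M$, frequency conservation forces another factor to have frequency $>M\ge N$, and the weight $|kk_1k_2|^{-1}$ beats the single derivative and leaves a gain $M^{-\theta}\le N^{-\theta}$ against the merely bounded high-frequency factors; if it has frequency $\le 2M$, one peels off its $P_{\le M}$ part, which is absorbed on the left (the coefficient being $\lesssim\delta^{\theta}C(R)<1$), and its $P_{(M,2M]}$ part, which is controlled by $\|P_{\le 2M}(w_1,w_2)\|_{X(I)}$ with a coefficient $\lesssim\delta^{\theta}C(R)$. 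Summing this geometric recursion over the dyadic bands $M=N,2N,4N,\dots$ (each a priori $\lesssim C(R)$) together with $\|P_{\le 2N}(w_1,w_2)(0)\|_{H^{-1/2}_0}\lesssim N^{-\theta}C(R)$ gives $\|P_{\le N}(w_1,w_2)\|_{X(I)}\lesssim \|P_{\le N}(w_1,w_2)(t_0)\|_{H^{-1/2}_0}+N^{-\theta}C(R)+\delta^{\theta}C(R)$. Concatenating the $\lesssim T/\delta$ intervals covering $[0,T]$, and choosing first $\delta$ (in terms of $R,\varepsilon,T$) so that the per-step loss is small and then $N_0$ large, gives $\sup_{|t|\le T}\|P_{\le N}(w_1,w_2)(t)\|_{H^{-1/2}_0}\lesssim\varepsilon$; since the inverse normal form transform is bounded on $H^{-1/2}_0$, the same bound holds for $S_{CKdV}(t)u_0-S_{CKdV}(t)u'_0$ and $S_{CKdV}(t)v_0-S_{CKdV}(t)v'_0$ after decreasing $\varepsilon$ by a fixed factor.

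The main obstacle is the frequency-band analysis in the previous step: one has to verify, case by case against the bilinear and trilinear estimates of Section~\ref{Sec3}, that every cubic term and every boundary term of the normal form equation feeding the band $|k|\le N$ is either tied to the already-small low-frequency part of the difference or is a high$\times$high$\to$low interaction on which the weight $|kk_1k_2|^{-1}$ leaves a genuine positive power of $N$ to spare; and, interlocked with this, one must choose the interval length and the initial frequency threshold so that the geometric sum over dyadic bands converges and the accumulation of errors over the $\sim T/\delta$ intervals does not eat up the gain. This frequency-band bookkeeping is what, in \cite{Colliander:2005vv}, the Miura transform simplified by turning the nonlinearity cubic; here the normal form of Section~\ref{Section: proof of different flow} plays that role.
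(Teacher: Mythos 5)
Your overall architecture (normal form via differentiation by parts, difference equation, short-time iteration, frequency-localized multilinear estimates) matches the paper's, but the central quantitative claim on which your dyadic bootstrap rests is not correct, and the paper's proof contains a specific device precisely to get around it. You assert that whenever the difference factor sits at frequency $>2M$ and the output at frequency $\le M$, the weight $|kk_1k_2|^{-1}$ ``beats the single derivative and leaves a gain $M^{-\theta}$ against the merely bounded high-frequency factors.'' At regularity $-1/2$ this is false: for the nonresonant trilinear term the frequency excess $|k_1k_2|^{1/2}/|k_0k_3|^{1/2}$ is exactly cancelled by the modulation gain $L_{\max}^{1/2}\gtrsim n_1 n_4^{-1/2}$, and the only leftover gain is the factor $(N_i/n_1)^{\sigma}n_3^{-\sigma}$ of Lemma~\ref{lem:local tri. est.}, which degenerates to $O(1)$ for adjacent-frequency interactions such as $k_1=2M+2$ (difference), $k_2=-2M$, $k_3=k_0=-1$, where $n_3=O(1)$. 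The resonant term (Lemma~\ref{lem:tri resonant est}) carries no gain at all. So the ``stuff'' in your recursion at each dyadic level is $O(C(R)^3)$, not $O(M^{-\theta})$, and the geometric sum does not close. This is exactly why the paper (following \cite{Colliander:2005vv}) does \emph{not} split at sharp dyadic thresholds: in Lemma~\ref{lem:local est. of different data} a pigeonhole argument selects a buffer band $[M,M+(N')^{1/4}]\subseteq[N'-(N')^{1/2},N']$ on which the solutions are quantitatively small ($\lesssim (N')^{-\sigma}$), so that any interaction sending a ``hi'' factor ($\ge M+(N')^{1/4}$) to a ``low'' output ($\le M$) must bridge the gap and hence has $n_3\gtrsim (N')^{1/4}$, activating the $n_3^{-\sigma}$ gain; terms touching the buffer band are small because the band itself is. Correspondingly, the frequency threshold must shrink by $(N')^{1/2}$ at each time step (agreement up to $N'$ yields smallness only up to $N'-(N')^{1/2}$), which is why the hypothesis is agreement up to $2N$ while the conclusion is only about $P_{\le N}$; your scheme keeps the threshold fixed, which the adjacent-frequency contamination does not permit.

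A second, smaller gap: you absorb the low-frequency part of the difference using a coefficient ``$\lesssim\delta^{\theta}C(R)<1$.'' At the endpoint spaces $Y^{-1/2}$, $Z^{-1/2}$ (with $b=\pm 1/2$) there is no power of the time-interval length to spare, and the paper instead achieves the needed smallness of $\|P_{\le N^{1/2}}u\|_{Y^{-1/2}}$ etc.\ by the large-torus rescaling $u_\alpha(x,t)=\alpha^{-2}u(x/\alpha,t/\alpha^3)$ of Remark~\ref{rmk:rescaling est.} and Step 3 of Subsection~\ref{sec: pf of thm trun}, invoked again in the proof of Lemma~\ref{lem:local est. of different data}. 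Finally, note that where smallness of genuinely high-frequency solution factors is used, it is the qualitative decay $\|P_{>N}v\|_{Y^{-1/2}}=o_N(1)$ of the tails of the fixed solutions, not a quantitative $N^{-\theta}$ coming from the normal-form weight; your proof should be restructured around the pigeonhole band and the shrinking threshold to be salvageable.
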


We now compare the solutions to the original flow and the truncated flow. The proof of this case is more involved and form main analysis of this work. However, we introduce a relatively easier way than the former result \cite{Colliander:2005vv}. In this step, we use the method of the normal form to change the flow with trilinear nonlinear terms and bilinear boundary terms. See the detail in Section \ref{Section: proof of different flow}. 
\begin{thm}[Truncation of the flow]\label{thm:est. of truncated flow}
Let $T>0$ and $\varepsilon >0$. There exists a positive integer $N_0(T,\varepsilon,\left\|u_0\right\|_{H^{-1/2}_0},\left\|v_0\right\|_{H^{-1/2}_0})$ such that for all $N> N_0$,
\begin{equation}\label{eq:est. of truncated flow}
\begin{aligned}
&\sup_{\left|t\right|\le T}\left\|P_{\le N^{\frac{1}{2}}}\left(S_{CKdV}\left(t\right)u_0-S_{BKdV}\left(t\right)u_0\right)\right\|_{H_0^{-\frac{1}{2}}} \\
&+ \sup_{\left|t\right|\le T}\left\|P_{\le N^{\frac{1}{2}}}\left(S_{CKdV}\left(t\right)v_0-S_{BKdV}\left(t\right)v_0\right)\right\|_{H_0^{-\frac{1}{2}}} \\
&\lesssim \varepsilon,
\end{aligned}
\end{equation}
where $\left(u_0,v_0\right) \in {H^{{-\frac{1}{2}}}_0} \times {H^{{-\frac{1}{2}}}_0}$ which has the frequency support on $\left[-N,N\right] \times \left[-N,N\right]$.
\end{thm}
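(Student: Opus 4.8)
The strategy is to trade the quadratic-derivative nonlinearities of \eqref{eq:Coupled KdV system} and \eqref{eq:smooth truncated system} for cubic ones via a normal form reduction, and then to estimate the difference of the two flows in the $X^{s,b}$ setting of Section~\ref{Sec3}, using that the genuine truncation error is supported at frequencies $\gtrsim N$ while the normal form correction is smoothing. To begin, put $(u,v)=S_{CKdV}(\cdot)(u_0,v_0)$ and $(u^{B},v^{B})=S_{BKdV}(\cdot)(u_0,v_0)$; since $B$ hits every nonlinear term, $(u^{B},v^{B})$ stays frequency supported in $[-N,N]$. By the global theory of \cite{Oh:2009uu,Colliander:2005vv}, both pairs are bounded on $[-T,T]$ by a quantity depending only on $T$ and $\|(u_0,v_0)\|_{H^{-1/2}_0\times H^{-1/2}_0}$, so $[-T,T]$ splits into $O_{T,\|u_0\|,\|v_0\|}(1)$ subintervals $I_j$ of a fixed length $\delta$ on which $\|u\|_{X^{-1/2,1/2}(I_j)}+\|v\|_{X^{-1/2,1/2}(I_j)}\lesssim 1$, and, since $\|B\|\le1$, the same for $(u^{B},v^{B})$.

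\emph{Normal form.} Passing to the interaction representation and integrating by parts in time in the Duhamel formula, one uses that the quadratic resonance function $(k_1+k_2)^3-k_1^3-k_2^3=3k k_1 k_2$ is nonzero on the mean-zero space. This produces identities
\begin{equation*}
u=u^{\sharp}+\MC{B}(v,v),\qquad v=v^{\sharp}+\MC{B}(u,v),
\end{equation*}
where $\MC{B}$ is a bilinear Fourier multiplier of symbol $\sim(k_1k_2)^{-1}$ and $(u^{\sharp},v^{\sharp})$ solve Airy equations with trilinear forcing (each term carrying at least one smoothing factor) plus endpoint values of $\MC{B}$; the same reduction applied to \eqref{eq:smooth truncated system} gives the analogous identities with copies of $B$ inserted, the multipliers unchanged.

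\emph{Difference estimate and iteration.} Set $W=u^{\sharp}-u^{\sharp,B}$ and $Z=v^{\sharp}-v^{\sharp,B}$. On each $I_j$, expand the difference of the trilinear forcings telescopically into (i) terms carrying a factor $u-u^{B}$ or $v-v^{B}$ and (ii) terms carrying a factor $(1-B)(\,\cdot\,)$. By the trilinear estimates of Section~\ref{Sec3}, the type-(i) terms are bounded by $\delta^{\theta}\big(\|W\|_{X^{-1/2,1/2}(I_j)}+\|Z\|_{X^{-1/2,1/2}(I_j)}\big)$ up to a term of size $O(N^{-\theta})$ coming from $\MC{B}(v,v)-\MC{B}^{B}(v^{B},v^{B})$, which at output frequency $\le N^{1/2}$ is small because of the $(k_1k_2)^{-1}$ smoothing; taking $\delta$ small absorbs the type-(i) contribution into the left side. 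For the type-(ii) terms, $1-B$ forces one leg to frequency $\gtrsim N/2$, and an output restricted to $\le N^{1/2}$ forces a second high leg, so combining the resulting resonance denominator with the trilinear estimate gives $O(N^{-\theta})$. Iterating over the $O_T(1)$ subintervals --- keeping $\sup_t\|P_{\le N}(u-u^{B})\|_{H^{-1/2}_0}$ merely bounded while $a_j:=\|P_{\le N^{1/2}}(u-u^{B})(t_j)\|_{H^{-1/2}_0}$ obeys $a_{j+1}\le C a_j+C N^{-\theta}$ with $a_0=0$ --- yields $\sup_{|t|\le T}\|P_{\le N^{1/2}}(u-u^{B})\|_{H^{-1/2}_0}\lesssim_T N^{-\theta}$, and likewise for $v-v^{B}$; choosing $N_0=N_0(T,\varepsilon,\|u_0\|_{H^{-1/2}_0},\|v_0\|_{H^{-1/2}_0})$ large enough completes the proof.

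The main obstacle is the difference estimate: one must verify that the resonance-denominator gain survives the $X^{s,b}$ trilinear estimates at the endpoint regularity $s=-1/2$, where naive frequency counting fails and the modulation weights must be exploited, both for the genuine truncation error and for the high-frequency part of $\MC{B}$ feeding the low output $P_{\le N^{1/2}}$. A secondary point is to check that $u\mapsto u+\MC{B}(v,v)$ is a uniformly-in-$N$ bounded perturbation of the identity on the relevant balls, so that $X^{s,b}$ bounds pass freely between $(u,v)$ and $(u^{\sharp},v^{\sharp})$.
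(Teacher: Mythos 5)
Your overall architecture (normal form via differentiation by parts, then multilinear $X^{s,b}$ estimates on the difference of the two flows) matches the paper's, but there is a genuine gap in your treatment of the type-(ii) terms, and it is exactly the point where the whole theorem lives or dies. You claim that when $1-B$ forces one leg to frequency $\gtrsim N$ and the output is restricted to $|k|\le N^{1/2}$, a second high leg appears and ``the resulting resonance denominator'' combined with the trilinear estimate gives $O(N^{-\theta})$. This works only for the nonresonant configurations, where $\Phi(k_1,k_2,k_3)=3(k_1+k_2)(k_2+k_3)(k_3+k_1)\neq 0$. But the trilinear term produced by the normal form excludes only $k_1+k_2=0$; the configurations $(k_1,k_2,k_3)=(k,-k_3,k_3)$ and $(-k_3,k,k_3)$ with $|k_3|\gtrsim N$ survive, have $\Phi=0$, and contribute sums of the schematic form $\sum_{|k_3|\gtrsim N}\frac{1-b(k-k_3)}{k_3}\,v_k\,u_{-k_3}u_{k_3}$, which are bounded but \emph{not} small in $N$ term by term --- there is no resonance denominator and no second derivative gain to exploit. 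The paper's Step 2 handles precisely this: it pairs the two resonant configurations and the $\pm k_3$ symmetry so that the dangerous parts cancel up to a factor $b(k+k_3)-b(k-k_3)=\MC{O}(|k|/N)=\MC{O}(N^{-1/2})$ by the mean value theorem (using that $b$ is a smooth even bump with derivative $\MC{O}(1/N)$), plus a tail $\sum_{|k_3|\gtrsim N}e_{k_3}$ of a convergent series. This cancellation is the entire reason a \emph{smooth} truncation is used; your argument, as written, would apply verbatim to the sharp truncation $P_{\le N}$, which is known to fail by the counterexample recalled in Remark~\ref{rmk:counter example}. Without this step your proof does not close.

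A secondary issue: your absorption of the type-(i) difference terms relies on a factor $\delta^{\theta}$ from shrinking the time interval, but the trilinear estimate of Lemma~\ref{lem:tri est.} is at the endpoint $b=1/2$ in $Y^{-1/2}\to Z^{-1/2}$, where restriction to a short interval does not produce a positive power of its length in the usual way. The paper instead makes the prefactors $\left\|P_{\le N^{1/2}}u\right\|_{Y^{-1/2}}$, $\left\|P_{\le N^{1/2}}v\right\|_{Y^{-1/2}}$ small by rescaling to a large torus $\alpha\MB{T}$ (Step 3 and Remark~\ref{rmk:rescaling est.}), where the data norm scales like $\alpha^{-1}$ while the multilinear constants degrade only like $\alpha^{0+}$; the difference terms are then absorbed into the left-hand side in one stroke rather than iterated. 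You should either adopt that rescaling or justify a genuine $\delta^{\theta}$ gain at the endpoint regularity, which the function spaces of Section~\ref{Sec3} do not obviously provide.
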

Note that we consider $S_{BKdV}\left(t\right)$ instead of $S_{FKdV}\left(t\right)$ in Theorem \ref{thm:est. of truncated flow}. However, it is enough to prove the approximation, because $S_{FKdV}\left(t\right)$ can be represented $S_{BBKdV}\left(t\right)$ by Remark \ref{rmk:relation of flow} and the support of initial data and $b\left(k\right)$. Thus, Theorem \ref{thm:est. of truncated flow} is equivalent to the approximation between $S_{CKdV}\left(t\right)$ and $S_{FKdV}\left(t\right)$. We can now reach the approximation lemma by assuming Theorem \ref{thm:est. of different data} and \ref{thm:est. of truncated flow}.
\begin{lem}[Approximation lemma]\label{lem:Approximation of solution flow}
Let $k_1, k_2 \in \MB{Z}^*$, $A_1, A_2>0$, $T>0$ and $0< \varepsilon \ll 1$. Then there exists a positive integer $N_0\left(k_1,k_2,A_1,A_2,T,\varepsilon\right) \gg \left|k_1\right|, \left|k_2\right|$ such that
\begin{equation*}
\left|k_1\right|^{-\frac{1}{2}}\left|\left({S_{CKdV}\left(T\right)u_0}\right)^{\wedge{}} \left(k_1\right)-\left({S_{FKdV}\left(T\right)u_0}\right)^{\wedge{}}\left(k_1\right) \right| \le \varepsilon,
\end{equation*}
and 
\begin{equation*}
\left|k_2\right|^{-\frac{1}{2}}\left|\left({S_{CKdV}\left(T\right)v_0}\right)^{\wedge{}} \left(k_2\right)-\left({S_{FKdV}\left(T\right)v_0}\right)^{\wedge{}}\left(k_2\right) \right| \le \varepsilon 
\end{equation*}
for $N>N_0\left(k_1,k_2,A_1,A_2,T,\varepsilon\right)$ and all initial data $u_0 \in B^{N}_{A_1}\left(0\right)$ and $v_0 \in B^{N}_{A_2}\left(0\right)$.
\end{lem}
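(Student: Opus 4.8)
The plan is to deduce Lemma~\ref{lem:Approximation of solution flow} by combining the two approximation theorems, Theorem~\ref{thm:est. of different data} and Theorem~\ref{thm:est. of truncated flow}, through a triangle inequality, and using Remark~\ref{rmk:relation of flow} to exchange $S_{FKdV}$ for $S_{BBKdV}$. The reason the argument is uniform over the balls $B^{N}_{A_1}(0)$ and $B^{N}_{A_2}(0)$ is that any $u_0 \in B^{N}_{A_1}(0)$ obeys $\|u_0\|_{H^{-1/2}_0} \le A_1$, and since the cutoff satisfies $0 \le b \le 1$ we also get $\|Bu_0\|_{H^{-1/2}_0} \le A_1$ (similarly for $v_0$ with $A_2$). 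At the end I will pick $N_0=N_0(k_1,k_2,A_1,A_2,T,\varepsilon)$ so that for $N>N_0$: $N>2\max(|k_1|,|k_2|)$, whence $b(k_i)=1$; $\lfloor N/4\rfloor>\max(|k_1|,|k_2|)$ and $N^{1/2}>\max(|k_1|,|k_2|)$; $\lfloor N/4\rfloor$ exceeds the threshold produced by Theorem~\ref{thm:est. of different data} for data of $H^{-1/2}_0$-norm $\le A_1$ (resp. $\le A_2$); and $N$ exceeds the threshold produced by Theorem~\ref{thm:est. of truncated flow} for data of $H^{-1/2}_0$-norm $\le A_1$ (resp. $\le A_2$). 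In both of those theorems one first replaces $\varepsilon$ by $\varepsilon$ divided by a fixed constant, large enough to absorb the elementary bound $|k|^{-1/2}|\hat f(k)| \lesssim \|f\|_{H^{-1/2}_0}$ valid for all $k\in\MB{Z}^*$ and all $f\in H^{-1/2}_0$.

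Now fix $u_0\in B^{N}_{A_1}(0)$ and $v_0\in B^{N}_{A_2}(0)$. Since $|k_1|\le N/2$ we have $b(k_1)=1$, so Remark~\ref{rmk:relation of flow} gives $(S_{FKdV}(T)u_0)^{\wedge}(k_1)=(B\,S_{FKdV}(T)u_0)^{\wedge}(k_1)=(S_{BBKdV}(T)(Bu_0))^{\wedge}(k_1)$, and the same with $(v_0,k_2)$; hence it suffices to bound $|k_1|^{-1/2}\,|(S_{CKdV}(T)u_0-S_{BBKdV}(T)(Bu_0))^{\wedge}(k_1)|$ (and the analogous $v$-quantity). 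I split
\begin{align*}
&\bigl(S_{CKdV}(T)u_0-S_{BBKdV}(T)(Bu_0)\bigr)^{\wedge}(k_1) \\
&\quad = \bigl(S_{CKdV}(T)u_0-S_{CKdV}(T)(Bu_0)\bigr)^{\wedge}(k_1) \\
&\qquad + \bigl(S_{CKdV}(T)(Bu_0)-S_{BBKdV}(T)(Bu_0)\bigr)^{\wedge}(k_1).
\end{align*}
For the first term, $P_{\le N/2}u_0=P_{\le N/2}(Bu_0)$ (since $b\equiv1$ on $[-N/2,N/2]$) and likewise for $v_0$; applying Theorem~\ref{thm:est. of different data} with its parameter taken to be $\lfloor N/4\rfloor$, so that the hypothesis of agreement on $[-2\lfloor N/4\rfloor,2\lfloor N/4\rfloor]$ is met, to the data $(u_0,v_0)$ and $(Bu_0,Bv_0)$ gives $\sup_{|t|\le T}\|P_{\le\lfloor N/4\rfloor}(S_{CKdV}(t)u_0-S_{CKdV}(t)(Bu_0))\|_{H^{-1/2}_0}\lesssim\varepsilon$; since $|k_1|\le\lfloor N/4\rfloor$, the elementary bound above converts this into the desired estimate for the first term. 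For the second term, I apply Theorem~\ref{thm:est. of truncated flow} with the common initial datum $Bu_0$ (which lies in $H^{-1/2}_0$, has frequency support in $[-N,N]$, and $H^{-1/2}_0$-norm $\le A_1$): because $b^2$ satisfies exactly the structural properties of $b$ used in that proof — it is even, supported in $[-N,N]$, identically $1$ on $[-N/2,N/2]$, and $0\le b^2\le1$ — the theorem holds verbatim with $S_{BKdV}$ replaced by $S_{BBKdV}$, yielding $\sup_{|t|\le T}\|P_{\le N^{1/2}}(S_{CKdV}(t)(Bu_0)-S_{BBKdV}(t)(Bu_0))\|_{H^{-1/2}_0}\lesssim\varepsilon$, and since $|k_1|\le N^{1/2}$ this controls the second term. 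Summing the two and running the same argument for $(v_0,k_2,A_2)$ gives the lemma once $N_0$ is chosen to dominate all the finitely many thresholds listed above; this $N_0$ depends only on $k_1,k_2,A_1,A_2,T,\varepsilon$ and is $\gg|k_1|,|k_2|$.

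For this lemma there is no new hard analysis: every multilinear and normal-form estimate is packaged inside Theorem~\ref{thm:est. of different data} and Theorem~\ref{thm:est. of truncated flow}. The points that need attention are the bookkeeping — the two inputs operate at different frequency scales (the data $u_0$ and $Bu_0$ agree only on $[-N/2,N/2]$, which forces Theorem~\ref{thm:est. of different data} to be used at the reduced scale $\lfloor N/4\rfloor$, whereas Theorem~\ref{thm:est. of truncated flow} only controls frequencies up to $N^{1/2}$), so one must check that a single $N_0\gg|k_1|,|k_2|$ simultaneously beats all thresholds; the verification that $\|Bu_0\|_{H^{-1/2}_0}\le A_1$ and $\|Bv_0\|_{H^{-1/2}_0}\le A_2$, which is what makes $N_0$ independent of the particular data in the balls; and the observation that Theorem~\ref{thm:est. of truncated flow} is insensitive to replacing the multiplier $b$ by $b^2$, so that it applies to $S_{BBKdV}$ exactly as stated for $S_{BKdV}$.
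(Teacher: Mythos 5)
Your proposal is correct and follows essentially the same route as the paper: Remark~\ref{rmk:relation of flow} to replace $S_{FKdV}$ by $S_{BBKdV}$, then the triangle inequality through the intermediate point $S_{CKdV}(T)(Bu_0)$, with Theorem~\ref{thm:est. of different data} controlling the first piece and Theorem~\ref{thm:est. of truncated flow} the second. You are merely more explicit than the paper about the bookkeeping (the $\lfloor N/4\rfloor$ scale, the bound $\|Bu_0\|_{H^{-1/2}_0}\le A_1$, and the fact that Theorem~\ref{thm:est. of truncated flow} applies verbatim with $b$ replaced by $b^2$), which are details the paper leaves implicit.
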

\begin{proof}
We assume that Theorem \ref{thm:est. of different data} and \ref{thm:est. of truncated flow} are true for a while. The following equalities are obtained by support of the operator $B$ and Remark \ref{rmk:relation of flow},
\begin{equation*}
\left({S_{FKdV}\left(t\right)u_0}\right)^{\wedge{}}\left(k_1\right) = \left({BS_{FKdV}\left(t\right)u_0}\right)^{\wedge{}}\left(k_1\right) = \left({S_{BBKdV}\left(Bu_0\right)}\right)^{\wedge{}}\left(k_1\right)
\end{equation*}
for $\left|k_1\right| \ll N_0$. 

The constant $\varepsilon$ in Lemma \ref{lem:Approximation of solution flow} is different from the constant $\varepsilon$ in Theorem \ref{thm:est. of different data} and \ref{thm:est. of truncated flow}, so we let $\varepsilon'$ denote the upper bounds in the theorems. We choose the sufficiently large $N_0\left(k_1,k_2,A_1,A_2,T,\varepsilon\right)$ such that for all $N>N_0$, `$\lesssim \varepsilon'$' can be changed into `$\le \frac{1}{2} \varepsilon$' in Theorem \ref{thm:est. of different data} and \ref{thm:est. of truncated flow}. Thus, we have
\begin{equation*}
\begin{split}
&\left|k_1\right|^{-\frac{1}{2}}\left|\left({S_{CKdV}\left(T\right)u_0}\right)^{\wedge{}} \left(k_1\right)-\left({S_{BBKdV}\left(T\right)Bu_0}\right)^{\wedge{}}\left(k_1\right) \right| \\
&\le \left|k_1\right|^{-\frac{1}{2}}\left|\left({S_{CKdV}\left(T\right)u_0}\right)^{\wedge{}} \left(k_1\right)-\left({S_{CKdV}\left(T\right)Bu_0}\right)^{\wedge{}}\left(k_1\right) \right| \\
&+ \left|k_1\right|^{-\frac{1}{2}}\left|\left({S_{CKdV}\left(T\right)Bu_0}\right)^{\wedge{}} \left(k_1\right)-\left({S_{BBKdV}\left(T\right)Bu_0}\right)^{\wedge{}}\left(k_1\right) \right| \\
&\le \varepsilon,
\end{split}
\end{equation*}
for $N>N_0\left(k_1,k_2,A_1,A_2,T,\varepsilon\right)$ and $\left|k_1\right| \le N^{1/2}$. In the first inequality, we use the triangle inequality. To have the second inequality, we apply Theorem \ref{thm:est. of different data} to the first term, and Theorem \ref{thm:est. of truncated flow} to the second term, respectively. Similarly, we can obtain the estimate with respect to $v_0$.
\end{proof}
\noindent Assuming Lemma \ref{lem:Approximation of solution flow}, we provide a proof of Theorem \ref{thm:nonsqueezing of CKdV(main thm)}.
\begin{proof}[Proof of Theorem \ref{thm:nonsqueezing of CKdV(main thm)}] $ $\\
Let $r_1$, $R_1$, $u_*$, $k_1$, $z$ and $T$ as in the theorem \ref{thm:nonsqueezing of CKdV(main thm)}. Choose $0 < \varepsilon < \frac{R_1-r_1}{2}$, and the ball $B^{\infty}_{R_1}\left(u_*\right) \subset B^{\infty}_{A_1}\left(0\right)$. We also choose $N > N_0\left(k_1, A_1, T, \varepsilon\right)$ so large that
\begin{equation*}
\left\|u_*- P_{\le N} u_*\right\|_{H^{-1/2}_0} \le \varepsilon.
\end{equation*}
By Lemma \ref{lem:Nonsqueezing property of finite solution flow}, we can find initial data $u_0 \in P_{\le N} H^{-\frac{1}{2}}_0\left(\MB{T}\right)$ satisfying
\begin{equation*}
\left\|u_0 - P_{\le N}u_*\right\|_{H^{-1/2}_0} \le R_1-\varepsilon
\end{equation*}
and then at time T,
\begin{equation*}
\left|k_1\right|^{-\frac{1}{2}} \left|\left(S_{FKdV}\left(T\right)u_0\right)^{\wedge{}}\left(k_1\right)-z\right| > r_1+\varepsilon.
\end{equation*}
From the triangle inequality,
\begin{equation*}
\left\|u_0 - u_*\right\|_{H^{-1/2}_0} \le \left\|u_0 - P_{\le N }u_*\right\|_{H^{-1/2}_0} + \left\|P_{\le N }u_*- u_*\right\|_{H^{-1/2}_0} \le R_1 - \varepsilon + \varepsilon = R_1.
\end{equation*}
We thus have the claim by the triangle inequality and Lemma \ref{lem:Approximation of solution flow},
\begin{equation*}
\begin{split}
&\left|k_1\right|^{-\frac{1}{2}} \left|z-\left(S_{CKdV}\left(T\right)u_0\right)^{\wedge{}}\left(k_1\right)\right| \\
&\ge \left|k_1\right|^{-\frac{1}{2}} \left[ \left|z-\left(S_{FKdV}\left(T\right)u_0\right)^{\wedge{}}\left(k_1\right)\right| - \left|\left(S_{FKdV}\left(T\right)u_0\right)^{\wedge{}}\left(k_1\right)-\left(S_{CKdV}\left(T\right)u_0\right)^{\wedge{}}\left(k_1\right)\right|\right] \\
& > r_1+\varepsilon -\varepsilon = r_1.
\end{split}
\end{equation*}
Similarly, we also get the result for $S_{CKdV}\left(t\right)v_0$.
\end{proof}
\noindent Hence, we remain to prove Theorem~\ref{thm:est. of different data} and Theorem~\ref{thm:est. of truncated flow}. In Section ~\ref{Sec3}, we introduce function spaces and prove bilinear and trilinear estimates. The analysis in this part is similar to \cite{Colliander:2004gc, Erdogan:2012gs}. In Section~\ref{Section: proof of different flow}, we use the normal form method via the differentiation by parts, to change the system to cubic system with bilinear boundary terms, and then we apply multilinear estimates to prove Theorem~\ref{thm:est. of different data} and Theorem~\ref{thm:est. of truncated flow}. 

\section{Bi- and Trilinear estimates}\label{Sec3}
In this section, we state and prove bilinear and trilinear estimates that are used in the proof of Theorem~\ref{thm:est. of different data} and \ref{thm:est. of truncated flow}. First of all, we define function spaces to obtain the multilinear estimates. These function spaces are Fourier restriction spaces that are known as the Bourgain space or the $X^{s,b}$-space. We slightly modify them to define $Y^s$ and $Z^s$ spaces for the solutions and nonlinear terms. For fixed $s,b \in \MB{R}$, and a mean-zero function $u\left(x,t\right)$ on $\MB{T} \times \MB{R}$, recall  
\begin{equation*}
\left\|u\right\|_{X^{s,b}}:=\left\|\left<k\right>^s \left<\tau-k^3\right>^b \MC{F} \left(u\right) \left(k, \tau \right)\right\|_{L^2_k L^2_{\tau}},
\end{equation*}
where $\MC{F}$ is the space-time Fourier transform,
\begin{equation*}
\MC{F}\left(u\right)\left(k,\tau\right) = \tilde u\left(k,\tau\right)= \int_{\MB{T} \times \MB{R}} e^{- i  \left(xk+t \tau\right)} u\left(x,t\right) dx dt.
\end{equation*}
However, $X^{s,b}$-space barely fails to control the $L^{\infty}_t H^s_x$ norm on $ \MB{T} \times \MB{R}$. Hence, we use slightly smaller spaces by adding an additional norm,
\begin{equation*}
\left\|u\right\|_{Y^s}:=\left\|u\right\|_{X^{s,1/2}}+\left\|\left<k\right>^s \MC{F}\left(u\right)\right\|_{L^2_k L^1_{\tau}},
\end{equation*}
and the space for nonlinear terms would be  
\begin{equation*}
\left\|u\right\|_{Z^s}:=\left\|u\right\|_{X^{s,-1/2}}+\left\|\frac{\left<k\right>^s}{\left<\tau-k^3\right>} \MC{F}\left(u\right)\right\|_{L^2_k L^1_{\tau}}.
\end{equation*}
Then, we have embeddings as follows: 
\begin{equation}\label{eq:space embedding}
\begin{aligned}
Y^s \subseteq C_tH^s_x \subseteq L^{\infty}_t H^s_x, \\
L^{\infty}_t H^s_x \subseteq  L^{2}_t H^s_x \subseteq  Z^s
\end{aligned}
\end{equation}
in a compact time interval $[0,T]$. \\
\\
We introduce bilinear and trilinear terms that will appear in normal form analysis. 
\begin{equation*}\label{eq: bilinear formula}
F_2\left(u,v\right) := \MC{F}^{-1}\left(\int \sum_{\substack{ k_i \in \MB{Z}^*\\k_0+k_1+k_2=0 }} \frac{\tilde u_{k_1}}{k_1} \frac{\tilde v_{k_2}}{k_2} d\Gamma\right),
\end{equation*}
where $\int \cdots \,d\Gamma$ means the integration taken on the hyperplane
\begin{equation*}
\left\{\left(\tau_0,\tau_1,\tau_2\right) \in \MB{R}^3:\tau_0+\tau_1+\tau_2=0\right\}.
\end{equation*}
In the analysis, we have two types of trilinear terms, namely, resonance or nonresonance terms:
\begin{equation}\label{eq:resonant trilinear}
F_{r}\left(u,v,w\right) :=  \MC{F}^{-1}\left(\int \tilde u_{-k} \sum_{\substack{k_i \in \MB{Z}^* \\k_2+k_3=0} }\tilde v_{k_2} \frac{\tilde w_{k_3}}{k_3} d\Gamma\right),
\end{equation}
and
\begin{equation*}
F_{nr}\left(u,v,w\right) := \MC{F}^{-1}\left(\int\sum_{\substack{k_i \in \MB{Z}^* \\ k_0+k_1+k_2+k_3=0 \\ (k_1+k_2)(k_2+k_3)(k_3+k_1) \not =0}} \tilde u_{k_1} \tilde v_{k_2} \frac{\tilde w_{k_3}}{k_3} d\Gamma\right),
\end{equation*}
the integral $ \int \cdots \, d\Gamma $ is taken on the set
\begin{equation*}
\left\{\left(\tau_0,\tau_1,\tau_2,\tau_3\right) \in \MB{R}^4:\tau_0+\tau_1+\tau_2+\tau_3=0\right\}.
\end{equation*}
For dyadic numbers $ N_i $, we assume $ N_i \sim \left|k_i\right|$. We denote by $n_1, n_2, n_3, n_4 \in \mathbb Z $ frequencies in order, i.e., 
\begin{equation*}
{|n_{1}|} \ge {|n_{2}|} \ge {|n_{3}|} \ge {|n_{4}|} \quad \text{and} \quad  \{n_1,n_2,n_3,n_4\} =\{k_0,k_1,k_2,k_3\}.
\end{equation*}
Similarly, in the case of three frequencies, let $n_1$, $n_2$ and $n_3$ be defined to be the maximum, median and minimum of $k_0$, $k_1$ and $k_2$, respectively. Namely, 
\begin{equation*}
{n_{1}} \ge {n_{2}} \ge {n_{3}} ~\text{and}~ \{n_1,n_2,n_3\} =\{k_0,k_1,k_2\}.
\end{equation*} 
\subsection{Bilinear estimate}
\begin{lem}\label{lem:bi est.}
Let $u,v \in Y^{-1/2}$. Then
\begin{equation}\label{eq: bi est.}
\left\|\partial_x^{-1}u\partial_x^{-1}v\right\|_{Y^{-1/2}} \lesssim \left\|u\right\|_{Y^{-1/2}} \left\|v\right\|_{Y^{-1/2}}. 
\end{equation}
\end{lem}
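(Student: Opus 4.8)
The plan is to prove the bilinear estimate \eqref{eq: bi est.} in the standard $X^{s,b}$-duality fashion, passing to Fourier side and bounding the relevant multiplier by exploiting the dispersive gain coming from the resonance function $\tau_0 - k_0^3 = (\tau_1 - k_1^3) + (\tau_2 - k_2^3) - (k_0^3 - k_1^3 - k_2^3)$ on the hyperplane $k_0 = k_1 + k_2$, $\tau_0 = \tau_1 + \tau_2$. Recall the algebraic identity $k_0^3 - k_1^3 - k_2^3 = 3 k_0 k_1 k_2$ when $k_0 = k_1 + k_2$, which furnishes the key off-diagonal smoothing: the largest of the three modulations $\langle \tau_j - k_j^3\rangle$ is $\gtrsim |k_0 k_1 k_2|$. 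I would first split the frequency interactions into the two regimes $|k_0| \sim \max(|k_1|,|k_2|)$ (high-high to high, or high-low) versus $|k_0| \ll \max(|k_1|,|k_2|)$ (high-high to low), since in the former the output weight $\langle k_0\rangle^{-1/2}$ is harmless while in the latter one must absorb the large factor $\langle k_0\rangle^{-1/2}$ against the two large input weights $\langle k_1\rangle^{-1/2}\langle k_2\rangle^{-1/2}$ and the $k_1^{-1} k_2^{-1}$ coming from the $\partial_x^{-1}$'s.

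Second, I would handle the $X^{s,1/2}$ component by the usual procedure: reduce to showing $\bigl| \iint \frac{m(k_1,k_2,\tau_1,\tau_2)}{\langle \tau_0 - k_0^3\rangle^{1/2}} \widehat{f}(k_1,\tau_1)\widehat{g}(k_2,\tau_2)\,\overline{\widehat{h}(k_0,\tau_0)}\bigr| \lesssim \|f\|_{L^2}\|g\|_{L^2}\|h\|_{L^2}$ where $m$ collects the weights $\langle k_0\rangle^{-1/2}\langle k_1\rangle^{1/2}\langle k_2\rangle^{1/2}|k_1 k_2|^{-1}\langle\tau_1-k_1^3\rangle^{-1/2}\langle\tau_2-k_2^3\rangle^{-1/2}$, then dyadically decompose in all frequencies and all three modulations. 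On each dyadic block, one of the three modulations dominates and is $\gtrsim |k_0 k_1 k_2|$; applying Cauchy--Schwarz in the appropriate two of the remaining variables (à la the standard periodic $L^4$ / bilinear Strichartz count, e.g. counting lattice points on $\{\tau = k^3 + O(M)\}$) and summing the resulting geometric series in the dyadic parameters closes the estimate, the worst case being precisely $N_1 \sim N_2 \gg N_0$, where the gain $|k_0 k_1 k_2|^{1/2} = N_1 N_0^{1/2}$ (up to lower order) exactly beats $N_0^{-1/2}N_1^{1/2}N_2^{1/2}N_1^{-1}N_2^{-1} = N_0^{-1/2}N_1^{-1}$ with room to spare. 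For the auxiliary $L^2_k L^1_\tau$ pieces defining the $Y^{-1/2}$ and $Z^{-1/2}$ norms, I would use $\|\langle\tau-k^3\rangle^{-1/2-}\|_{L^2_\tau} \lesssim 1$ together with Young's inequality in $\tau$ (convolution in the $\tau$ variables on the hyperplane) to reduce these contributions to the same multiplier estimates, at the cost of an $\varepsilon$ in the modulation exponents, which is absorbed harmlessly.

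Third, I would treat the subtlety that at endpoint regularity $b = 1/2$ on $\mathbb{T}$ the bare $X^{s,1/2}$ space does not embed into $C_t H^s_x$; this is exactly why the paper works in $Y^s$ and $Z^s$. The $L^2_k L^1_\tau$ add-ons are built precisely so that the bilinear output lands back in $Y^{-1/2}$ and not merely $X^{-1/2,1/2}$, so I must verify the output's $L^2_k L^1_\tau$ norm, which reduces (by Cauchy--Schwarz in $\tau_0$ after inserting and removing $\langle\tau_0-k_0^3\rangle^{1/2+}$) to an $X^{-1/2,-1/2-}$-type bound on $\partial_x^{-1}u\,\partial_x^{-1}v$ in terms of $\|u\|_{X^{-1/2,1/2}}$, $\|v\|_{X^{-1/2,1/2}}$ — again the same resonance-function analysis with slightly relaxed exponents.

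The main obstacle I expect is the high-high-to-low interaction $N_1 \sim N_2 \gg N_0$ combined with a small-modulation configuration: here one cannot afford to lose any power of $N_1$, so the dispersive smoothing $3k_0 k_1 k_2$ must be used sharply, and the lattice-point count on the resonant hyperplane must be done carefully (the standard fact that, for fixed $\tau_0, k_0$, the number of $(k_1, \tau_1)$ with $k_1 + k_2 = k_0$, $\langle\tau_1 - k_1^3\rangle, \langle\tau_2-k_2^3\rangle \lesssim L$ is $O(L \cdot (1 + L/|k_0|))$ or similar). Everything else — the two other modulation-dominant cases, the high-low cases, the summation of dyadic series, and the $L^2_k L^1_\tau$ reductions — is routine once this core count is in hand.
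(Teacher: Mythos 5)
Your route is genuinely different from the paper's, and it contains one real gap. For the $X^{-1/2,1/2}$ component the paper does not reprove anything: after absorbing the two factors of $\partial_x^{-1}$ into the weights, it observes that \eqref{eq: bi est.} is a variant of $\left\|uv\right\|_{X^{-1/2,1/2}} \lesssim \left\|u\right\|_{Y^{1/2}} \left\|v\right\|_{Y^{1/2}}$ and simply cites Section 4 of \cite{Colliander:2004gc}. Your duality/resonance-identity/dyadic-counting argument is essentially a sketch of that cited proof; its skeleton is sound, and your bookkeeping in the worst regime $N_1\sim N_2\gg N_0$ (the frequency weights balance exactly and the remaining $L^4$-Strichartz count closes the estimate) is the right accounting. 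One slip: for an output measured in $X^{-1/2,+1/2}$ the factor $\left<\tau_0-k_0^3\right>^{1/2}$ belongs in the numerator of your dualized multiplier, not the denominator.

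The genuine gap is in the $L^2_kL^1_{\tau}$ component of the output norm. You propose to insert and remove $\left<\tau_0-k_0^3\right>^{1/2+}$, apply Cauchy--Schwarz in $\tau_0$, and reduce to an ``$X^{-1/2,-1/2-}$-type bound.'' That reduction is the correct one for a $Z^{-1/2}$ output, whose $L^2_kL^1_{\tau}$ piece carries the weight $\left<\tau-k^3\right>^{-1}$; but Lemma \ref{lem:bi est.} lands in $Y^{-1/2}$, whose $L^2_kL^1_{\tau}$ piece carries no negative modulation weight. Cauchy--Schwarz there costs a factor $\left<\tau_0-k_0^3\right>^{1/2+\varepsilon}$ in the numerator, i.e.\ it demands an $X^{-1/2,1/2+\varepsilon}$ bound on $\partial_x^{-1}u\,\partial_x^{-1}v$ --- strictly stronger than the first component, and not something your worst-case computation delivers, since the frequency weights there balance with no room; any absorption of the extra modulation factor would need a separate argument. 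You have conflated the $Y$- and $Z$-norm add-ons. The fix is easy and is what the paper does: this piece needs no dispersion at all. It reduces to $\left\|\mathcal{F}(FG)\right\|_{L^2_kL^1_{\tau}} \lesssim \left\|\left<k\right>^{1/2}\mathcal{F}F\right\|_{L^2_kL^1_{\tau}}\left\|\left<k\right>^{1/2}\mathcal{F}G\right\|_{L^2_kL^1_{\tau}}$ with $F=\partial_x^{-1}u$, $G=\partial_x^{-1}v$, which follows from Young's inequality in the $\tau$-convolution, Minkowski, and the product estimate $H^{1/2}\cdot H^{1/2}\to L^2$ on the torus via H\"older and Sobolev embedding. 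If you insist on a modulation-based argument for this piece, you must first restrict $\tau_0$ to the region $\left<\tau_0-k_0^3\right>\lesssim \max(L_1,L_2,|k_0k_1k_2|)$ forced by the convolution constraint before converting $L^1_{\tau}$ to $L^2_{\tau}$.
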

\begin{proof}
The $X^{-1/2,1/2}$ part of \eqref{eq: bi est.} is a variant of 
\begin{equation}\label{eq:X^s,1/2 est.}
\left\|uv\right\|_{X^{-1/2,1/2}} \lesssim \left\|u\right\|_{Y^{1/2}} \left\|v\right\|_{Y^{1/2}},
\end{equation}
which was proved in Section 4 of \cite{Colliander:2004gc}, so it is done. To prove the $L^2_kL^1_{\tau}$ part, it is enough to show the estimate
\begin{equation*}
\left\|uv\right\|_{L^2_x L^1_{\tau}} \lesssim \left\|u\right\|_{H^{1/2}_x L^1_{\tau}} \left\|v\right\|_{H^{1/2}_x L^1_{\tau}}.
\end{equation*}
It can be obtained by the Young, H\"older, and Sobolev inequalities.
\end{proof}
\subsection{Trilinear estimate} $ $ \\
In this subsection, we prove the trilinear estimate of the following form.
\begin{lem}\label{lem:tri est.}
Let $u,v$ and $w \in Y^{-1/2}$. Then
\begin{equation}\label{eq:tri est.}
\left\|\left(uv  - P_0\left(uv\right)\right)\partial_x^{-1}w \right\|_{Z^{-1/2}} \lesssim \left\|u\right\|_{Y^{-1/2}} \left\|v\right\|_{Y^{-1/2}}\left\|w\right\|_{Y^{-1/2}},
\end{equation}
where $ P_0$ is the Dirichlet projection to zero frequency, i.e., $P_0\left(f\right) := \int_{\MB{T}} f dx$.
\end{lem}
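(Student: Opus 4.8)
The plan is to reduce \eqref{eq:tri est.} to a dyadic multilinear estimate in Fourier variables and then decompose according to the relative sizes of the frequencies. First I would write everything on the Fourier side: put $k_0 + k_1 + k_2 + k_3 = 0$, localize each factor to dyadic blocks $|k_i| \sim N_i$, and observe that the factor $uv - P_0(uv)$ kills the pieces where $k_1 + k_2 = 0$, so $n_1 \sim n_2$ always holds among the four ordered frequencies $|n_1| \ge |n_2| \ge |n_3| \ge |n_4|$. Using the definition of $Z^{-1/2}$, it suffices to control the $X^{-1/2,-1/2}$ piece together with the $L^2_k L^1_\tau$ piece; for the former the standard move is duality against a function $g \in X^{1/2,1/2}$ and an application of the $L^4_{x,t}$ Strichartz estimate for the periodic Airy group (Bourgain's $L^4$ estimate), while the $L^1_\tau$ piece follows from the same analysis by Cauchy--Schwarz in $\tau$ after paying a factor from $\langle \tau - k^3\rangle^{-1/2}$.

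Second, I would split into cases by the configuration of $(N_1, N_2, N_3, N_4)$. The benign regime is $N_3 \lesssim 1$ (equivalently, only two high frequencies $N_1 \sim N_2$ and the other two are $O(1)$): here the two low factors cost nothing, $\partial_x^{-1}$ only helps, and the bilinear estimate of Lemma~\ref{lem:bi est.}, or directly \eqref{eq:X^s,1/2 est.} from \cite{Colliander:2004gc}, closes the bound with room to spare. The genuinely multilinear regime is when $N_3 \gg 1$, so there are at least three comparable large frequencies; then one exploits the resonance function. On the hyperplane $k_0 + k_1 + k_2 + k_3 = 0$ one has the algebraic identity $k_0^3 + k_1^3 + k_2^3 + k_3^3 = 3(k_0+k_1)(k_0+k_2)(k_0+k_3)$, so the maximal modulation $\max_i \langle \tau_i - k_i^3\rangle \gtrsim |(k_0+k_1)(k_0+k_2)(k_0+k_3)|$. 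Since $uv - P_0(uv)$ removes $k_0 + k_3 = 0$ as well (that pairing corresponds to $k_1 + k_2 = 0$), and using $|k_3|$ in the denominator, one gains enough powers of the large frequencies from the modulation weight to absorb the $\langle k_i\rangle^{-1/2}$ losses; the $L^4$ Strichartz estimate applied to the pair with the smaller modulation, combined with an $L^2 \times L^\infty$-type estimate (Bernstein plus the modulation gain) on the remaining factors, finishes this case.

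Third, I would handle the distribution of the derivative: whether $N_3$, the third-largest frequency, sits on the $\partial_x^{-1} w$ factor or not. If $|k_3| = n_1$ or $n_2$ (the $w$ frequency is large), then $\partial_x^{-1}$ itself supplies an extra factor $\sim N_1^{-1}$, which more than compensates, so this subcase is easy. The remaining subcase, where $|k_3|$ is small while $N_1 \sim N_2 \sim N_3 \gg |k_3|$, is where the modulation/Strichartz argument above is essential, and $\partial_x^{-1} w$ then genuinely loses up to $|k_3|^{-1} \gtrsim 1$, i.e.\ it is at worst harmless. Throughout, the output modulation $\langle \tau_0 - k_0^3\rangle^{-1/2}$ and the $L^1_\tau$ refinement are handled exactly as in \cite{Erdogan:2012gs, Colliander:2004gc}: one never loses more than an $\varepsilon$ of derivative by summing the dyadic pieces geometrically once the main power gain is in hand.

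The main obstacle I expect is the case of three comparable high frequencies with a low output or low $w$-frequency: there the $\langle k\rangle^{-1/2}$ weights on the three large factors combine to a loss of $N_1^{-3/2}$ on the left while the inputs only provide $N_1^{-3/2}$ among them with no margin, so one must extract the full strength of the resonance identity $k_0^3+k_1^3+k_2^3+k_3^3 = 3(k_0+k_1)(k_0+k_2)(k_0+k_3)$ and carefully verify that $uv - P_0(uv)$ excludes precisely the degenerate pairings that would make this product small. Getting the bookkeeping of which dyadic sum is summable (and in which variable one applies Cauchy--Schwarz) right in that case is the delicate point; everything else is a routine combination of Hölder, Bernstein, and the periodic $L^4$ Strichartz estimate.
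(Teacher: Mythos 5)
There is a genuine gap in your treatment of the resonant interactions. You assert that the subtraction $uv - P_0(uv)$ ``excludes precisely the degenerate pairings that would make this product small,'' but it excludes only \emph{one} of the three: on the hyperplane $k_0+k_1+k_2+k_3=0$ the resonance function $3(k_1+k_2)(k_2+k_3)(k_3+k_1)$ vanishes on any of the three pairing sets $k_1+k_2=0$, $k_2+k_3=0$, $k_1+k_3=0$, and the mean-zero subtraction removes only the first (equivalently $k_0+k_3=0$). The pairings of the $w$-frequency with the $u$- or $v$-frequency survive, and on those sets $\max_i \langle \tau_i - k_i^3\rangle \gtrsim |(k_1+k_2)(k_2+k_3)(k_3+k_1)|$ gives you nothing, so the modulation/Strichartz mechanism that carries your ``three comparable high frequencies'' case simply does not apply there. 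Since that set contains configurations with two large paired frequencies and no modulation gain whatsoever, your argument as written does not close.

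This is exactly why the paper splits $(uv-P_0(uv))\partial_x^{-1}w = F_r(u,v,w)+F_r(v,u,w)+F_{nr}(u,v,w)$ and proves a separate estimate (Lemma~\ref{lem:tri resonant est}) for the resonant piece: there the sum over $k_2+k_3=0$ collapses to the scalar $\sum_k \tilde v_{-k}\tilde w_k/k$, which is bounded by $\|v\|_{H^{-1/2}}\|w\|_{H^{-1/2}}$ via Cauchy--Schwarz, and the whole term is then handled by H\"older and Sobolev in $x$ and $t$ directly, with no recourse to the resonance identity. Your nonresonant analysis (duality, the identity for $\sum k_i^3$, $L^4$/trilinear $L^2$ Strichartz, dyadic bookkeeping with $n_1\sim n_2$) is essentially the paper's Lemma~\ref{lem:local tri. est.} and is sound on the set where all three pairing factors are nonzero; you need to add the resonant case as a separate, elementary step before the proof is complete.
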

As opposed to the bilinear estimate, the trilinear term contains resonant interactions. We decompose it into resonant part and nonresonant part. We first consider the resonant part.
\begin{lem}\label{lem:tri resonant est}
Let $u,v$ and $w \in Y^{-1/2}$. We have
\begin{equation*}
\left\|F_{r}\left(u,v,w\right)\right\|_{Z^{-1/2}} \lesssim \left\|u\right\|_{Y^{-1/2}}\left\|v\right\|_{Y^{-1/2}}\left\|w\right\|_{Y^{-1/2}}.
\end{equation*}
\end{lem}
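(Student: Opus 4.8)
The plan is to exploit that a resonant interaction collapses two of the three inputs into a scalar function of time. First I would unwind \eqref{eq:resonant trilinear}: substituting $k_3=-k_2$, carrying out the inner sum, and using that the $d\Gamma$-integration in the $\tau$-variables is a time convolution, one sees
\[
F_r(u,v,w)(x,t)=c\,u^{\flat}(x,t)\,g(t),\qquad g(t):=-\sum_{k\in\MB{Z}^*}\frac{\tilde v_{k}(t)\,\tilde w_{-k}(t)}{k},
\]
where $c$ is a fixed constant, $u^{\flat}$ denotes $u$ composed with the reflection $x\mapsto-x$ (the exact reflection/conjugation is immaterial for every norm below, since it preserves $H^{-1/2}_x$ and $Y^{-1/2}$), and $g$ is a function of $t$ alone --- up to a constant it is the spatial pairing $\langle v(\cdot,t),\partial_x^{-1}w(\cdot,t)\rangle_{L^2_x}$. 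Since $v,w\in Y^{-1/2}\subseteq L^\infty_tH^{-1/2}_x$, Cauchy--Schwarz in $k$ shows the series defining $g$ converges absolutely for a.e.\ $t$, so the identity is meaningful.

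Next I would estimate in two short steps. By the embedding \eqref{eq:space embedding}, $L^2_tH^{-1/2}_x\subseteq Z^{-1/2}$ on $[0,T]$, so it suffices to bound the $L^2_tH^{-1/2}_x$ norm of $F_r(u,v,w)$. Because $k\mapsto-k$ preserves $\langle k\rangle$, the product structure gives $\|F_r(u,v,w)(\cdot,t)\|_{H^{-1/2}_x}=|c|\,|g(t)|\,\|u(\cdot,t)\|_{H^{-1/2}_x}$, whence, by H\"older in $t$,
\[
\|F_r(u,v,w)\|_{L^2_tH^{-1/2}_x}\lesssim_{T}\|g\|_{L^\infty_t([0,T])}\,\|u\|_{L^\infty_tH^{-1/2}_x}.
\]
For $g$ itself, each summand has a nonzero frequency and $|k|^{-1}\le\sqrt2\,\langle k\rangle^{-1/2}\langle k\rangle^{-1/2}$, so Cauchy--Schwarz splits this single weight evenly:
\[
|g(t)|\le\sum_{k\ne0}\frac{|\tilde v_{k}(t)|\,|\tilde w_{-k}(t)|}{|k|}\lesssim\|v(\cdot,t)\|_{H^{-1/2}_x}\,\|w(\cdot,t)\|_{H^{-1/2}_x},
\]
using $\langle-k\rangle=\langle k\rangle$. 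Taking $\sup_{t\in[0,T]}$ and applying $Y^{-1/2}\subseteq L^\infty_tH^{-1/2}_x$ to $u$, $v$, $w$ closes the estimate.

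I do not expect a genuine obstacle --- the resonant term is the easy case, which is precisely why it is isolated before the nonresonant one. The only point that needs care is the derivative bookkeeping: on the set $\{k_2+k_3=0\}$ the cubic resonance function vanishes identically, so there is no modulation gain to draw from the $X^{s,b}$-weights and no spatial smoothing either, and the estimate must rest entirely on the product structure together with the lone factor $|k_3|^{-1}=|k_2|^{-1}$. That factor happens to furnish exactly two powers of $\langle k_2\rangle^{-1/2}$, one for $v$ and one for $w$ in $H^{-1/2}$, so the bound is borderline but works --- this is where the role played in \cite{Colliander:2005vv} by the Miura transform is here replaced by elementary pairing estimates. (If a $T$-independent statement were wanted, one could instead use $\|g\|_{L^2_t}\lesssim\|v\|_{L^4_tH^{-1/2}_x}\|w\|_{L^4_tH^{-1/2}_x}$ with the periodic $L^4_{t,x}$ Strichartz estimate, but the simpler argument suffices for the fixed-$T$ statements used in the paper.)
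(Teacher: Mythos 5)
Your proof is correct and follows essentially the same route as the paper's: both reduce the $Z^{-1/2}$ bound to an $L^2_tH^{-1/2}_x$ bound via the embedding \eqref{eq:space embedding}, and both exploit that the resonant term factors as (a reflection of) $u$ times the scalar pairing $\langle v,\partial_x^{-1}w\rangle_{L^2_x}$, which is controlled by Cauchy--Schwarz in the frequency variable using the single weight $|k|^{-1}=\langle k\rangle^{-1/2}\langle k\rangle^{-1/2}$. The only difference is cosmetic: you handle the time variable with $Y^{-1/2}\subseteq L^\infty_tH^{-1/2}_x$ and H\"older on $[0,T]$, whereas the paper passes through $H^{1/2}_t$-based Sobolev estimates after conjugating by $e^{-t\partial_x^3}$.
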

\begin{proof}
To prove the lemma, we handle the space variable and the time variable in consecutive order. We first show an estimate for the spatial domain,
\begin{equation*}
\left\|F_{r}\left(u,v,w\right)\right\|_{H_x^{-1/2}} \lesssim \left\|u\right\|_{H_x^{-1/2}}\left\|v\right\|_{H_x^{-1/2}}\left\|w\right\|_{H_x^{-1/2}}.
\end{equation*}
By duality and the Plancherel's, it suffices to prove the estimate
\begin{equation}\label{eq: space domain}
\left|\int z  u dx\right| \left| \int  v  W dx\right| \lesssim  \left\|z\right\|_{H^{1/2}_x}\left\|u\right\|_{H^{-1/2}_x}\left\|v\right\|_{H^{-1/2}_x}\left\|W\right\|_{H^{1/2}_x},
\end{equation}
where $W = \partial_x^{-1} w$. It is deduced by the H\"older inequality. \\
For the time variable, it is obvious that
\begin{equation*}
\left\|\left<\tau-k^3\right>^{-1/2}\MC{F}\left[F_r\left(u,v,w\right)\right]\right\|_{L^2_{k,\tau}} \lesssim \left\|F_r\left(u,v,w\right)\right\|_{L^2_{x,t}},
\end{equation*}
and
\begin{equation*}
\left\|\left<\tau-k^3\right>^{-1}\MC{F}\left(F_r\left(u,v,w\right)\right)\right\|_{L^2_kL^1_{\tau}} \lesssim \left\|F_r\left(u,v,w\right)\right\|_{L^2_{x,t}}.
\end{equation*}
By taking temporal frequency translation $e^{-t \partial_x^3}$, the claim is reduced to 
\begin{equation*}
\left\|F_r\left(u,v,w\right)\right\|_{L^2_{t}H^{-1/2}_x} \lesssim \left\|u\right\|_{H^{1/2}_{t}H^{-1/2}_x}\left\|v\right\|_{H^{1/2}_{t}H^{-1/2}_x}\left\|w\right\|_{H^{1/2}_{t}H^{-1/2}_x},
\end{equation*}
and then the claim follows from \eqref{eq: space domain}, the H\"older inequality, and  the Sobolev inequality.
\end{proof}
Next, we consider the nonresonant case. 
We can prove a slightly stronger estimate for the nonresonant  part.
\begin{lem}\label{lem:local tri. est.}
Let $u,v$ and $w \in Y^{-1/2}$ and $N_0$, $N_1$, $N_2$ and $N_3$ be dyadic numbers. Then
\begin{equation}\label{eq:freq supp nonresonant est.}
\left\|P_{N_0}F_{nr}\left(P_{N_1}u,P_{N_2}v,P_{N_3}w\right)\right\|_{Z^{-1/2}} \lesssim \left(\frac{N_i}{{n_{1}}}\right)^{\sigma} {n_{3}}^{-\sigma} \left\|u\right\|_{Y^{-1/2}}\left\|v\right\|_{Y^{-1/2}}\left\|w\right\|_{Y^{-1/2}},
\end{equation}
for small enough $\sigma>0$ and $i=1 ~\text{or} ~2$.
\end{lem}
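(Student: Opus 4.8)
The plan is to run a standard Littlewood--Paley/dyadic case analysis on the nonresonant trilinear form, exploiting the algebraic identity for the modulation. Recall that on the frequency hyperplane $k_0+k_1+k_2+k_3=0$ with $(k_1+k_2)(k_2+k_3)(k_3+k_1)\neq 0$ one has the resonance function factorization
\begin{equation*}
(\tau_0-k_0^3)+(\tau_1-k_1^3)+(\tau_2-k_2^3)+(\tau_3-k_3^3) = -k_0^3-k_1^3-k_2^3-k_3^3 = 3(k_0+k_1)(k_0+k_2)(k_0+k_3),
\end{equation*}
so the largest modulation $L_{\max} := \max_j \langle \tau_j - k_j^3\rangle$ satisfies $L_{\max}\gtrsim |k_0+k_1||k_0+k_2||k_0+k_3|$; in the nonresonant regime every pairwise sum here is a nonzero integer, so $L_{\max}\gtrsim n_3$ as soon as two of the three factors are $\gtrsim n_1$ (which is the generic situation, since at least two of $k_0,k_1,k_2,k_3$ have size $\sim n_1$). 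First I would dualize: testing against $P_{N_0}\phi$ with $\|\phi\|_{X^{1/2,1/2}}\le 1$ (and separately treating the $L^2_kL^1_\tau$ piece of $Z^{-1/2}$, which is handled by the same bounds since $\langle\tau-k^3\rangle^{-1}\in L^1_\tau$ after Cauchy--Schwarz costs only an $\langle\tau-k^3\rangle^{-1/2}$), the estimate reduces to a bound on
\begin{equation*}
\left|\int \sum_{\substack{k_0+k_1+k_2+k_3=0\\ (k_1+k_2)(k_2+k_3)(k_3+k_1)\neq 0}} \widetilde{\phi}_{k_0}\,\widetilde u_{k_1}\,\widetilde v_{k_2}\,\frac{\widetilde w_{k_3}}{k_3}\, d\Gamma\right|
\lesssim \left(\tfrac{N_i}{n_1}\right)^\sigma n_3^{-\sigma}\,\|u\|_{Y^{-1/2}}\|v\|_{Y^{-1/2}}\|w\|_{Y^{-1/2}}.
\end{equation*}
Absorbing the weights $\langle k_j\rangle^{-1/2}$ (and the extra $k_3^{-1}$) into the norms, and writing $f_j$ for the nonnegative $L^2_{k,\tau}$ functions carrying the $X^{s,1/2}$ or $X^{1/2,1/2}$ weights, the task becomes a weighted convolution estimate of the form $\sum \int \prod f_j(k_j,\tau_j)\, m(k,\tau)\, d\Gamma \lesssim (\text{gain}) \prod\|f_j\|_{L^2}$, where $m$ collects the frequency weights divided by $\langle\tau_j-k_j^3\rangle^{1/2}$.

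The heart of the matter is then an $L^4$-type argument. I would place the two factors of largest frequency (and whichever factor carries the smallest modulation) into $L^4_{x,t}$ via the periodic $L^4$ Strichartz / bilinear refinement for $X^{s,b}$ functions on $\MB{T}\times\MB{R}$ — precisely the estimates used in \cite{Colliander:2004gc, Erdogan:2012gs} — and the remaining two factors into $L^4_{x,t}$ as well, so that Hölder closes after summing in modulation. The decisive algebraic input is that, because the interaction is nonresonant, we may always factor out $L_{\max}^{-1/2}\lesssim n_3^{-1/2}(n_1 n_1)^{-1/2}$ from the product of $\langle\tau_j-k_j^3\rangle^{-1/2}$ weights; a fraction $\sigma$ of this power is spent to produce the claimed gain $n_3^{-\sigma}(N_i/n_1)^\sigma$ (the $(N_i/n_1)^\sigma$ factor coming from the elementary bound $|k_0+k_i|\ge 1$ together with $|k_0+k_i|\lesssim n_1$, i.e.\ $1\ge (N_i/n_1)\cdot(\text{something}\ \ge 1)^{-1}$ rearranged — more honestly, it is extracted from writing one of the large-frequency factors of $L_{\max}$ as $\gtrsim n_1$ and the other as $\ge 1$, then trading $(\text{that factor}/n_1)^\sigma$), and the remaining $\tfrac12-\sigma$ power of $L_{\max}^{-1}$ together with the $L^4$ Strichartz estimates absorbs the frequency weights $\langle k_j\rangle^{-1/2}$ and guarantees summability of the dyadic pieces in $N_0,\dots,N_3$. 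The borderline high-high-to-high cases and the case where the small frequency $n_4$ (or $n_3$) degenerates are treated by hand using that the relevant pairwise sums remain $\gtrsim 1$.

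The main obstacle I anticipate is the bookkeeping in the regime where two modulations are comparably large and one frequency is much smaller than $n_1$: there the crude bound $L_{\max}\gtrsim n_1^2 n_3$ may fail to be fully available (only $L_{\max}\gtrsim n_1 n_3$ or even just $\gtrsim n_3$ when two of the pairwise sums are small in size but still nonzero integers), and one must instead combine the surviving $\langle\tau-k^3\rangle^{-1/2}$ with a bilinear $L^2$ Strichartz estimate to recover enough decay; keeping the exponents consistent so that the net power of every dyadic parameter is strictly negative — while still producing exactly $(N_i/n_1)^\sigma n_3^{-\sigma}$ and no worse — is the delicate part. Once the frequency-localized estimate \eqref{eq:freq supp nonresonant est.} is in hand, Lemma \ref{lem:tri est.} follows by summing over dyadic $N_0,N_1,N_2,N_3$ (the gain $(N_i/n_1)^\sigma$ handles the high-frequency output sum, $n_3^{-\sigma}$ handles the low-frequency sum), combined with Lemma \ref{lem:tri resonant est} for the resonant contribution.
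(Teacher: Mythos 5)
Your overall skeleton (dualize, use the resonance identity to get a lower bound on the largest modulation, then close with multilinear Strichartz-type estimates and sum dyadically) matches the paper's strategy, but two of your quantitative inputs are wrong or too weak, and they are exactly the points on which the proof turns.

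First, the claimed bound $L_{\max}\gtrsim n_1^2\,n_3$ is false. The resonance identity only gives $L_{\max}\gtrsim \left|k_1+k_2\right|\left|k_2+k_3\right|\left|k_3+k_1\right|$, and two of these pairwise sums can simultaneously equal $\pm 1$ while all four frequencies are of size $n_1$ (e.g.\ $k_0=N$, $k_1=-N+1$, $k_2=N-2$, $k_3=-N+1$ gives a product of size $N$ while $n_1^2 n_3\sim N^3$). The sharp general bound, which the paper quotes from Lemma 4.4 of \cite{Colliander:2005vv}, is $L_{\max}\gtrsim n_1^2\,n_4^{-1}$, and it contains no positive power of $n_3$. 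Consequently the factor $n_3^{-\sigma}$ in \eqref{eq:freq supp nonresonant est.} cannot be extracted from the modulation at all. In the paper it comes from a different mechanism: at least one of $k_1,k_2,k_3$ is $\MC{O}\left(n_3\right)$, so the weight $n_3^{-1/2+\sigma}$ is transferred onto that factor as a negative derivative, and the trilinear estimate \eqref{eq:L2 multiest.} is applied with its $X^{1/2-\delta,1/2-\delta}$ slot on precisely that factor. Similarly, the factor $\left(N_i/n_1\right)^{\sigma}$ is not "traded" from $L_{\max}$; it is essentially free (the paper's Lemma~\ref{lem:est. of coeff}: when $N_1\ll n_1$ every ratio in the relevant product is already $\le 1$), and its only role is to make the dyadic sums in Lemma~\ref{lem:tri est.} converge.

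Second, the engine you propose — four applications of the periodic $L^4$ Strichartz estimate, i.e.\ $b=1/3$ on each factor — leaves only $L_{\max}^{1/6}\gtrsim\left(n_1^2/n_4\right)^{1/6}$ of spare modulation, which cannot absorb the frequency weight $\left|N_1N_2\right|^{1/2}/\left|N_0N_3\right|^{1/2}\sim n_1$ in the high-high $\to$ low-low regime. You flag this regime yourself as "the delicate part" but do not resolve it; the paper resolves it by using Cauchy--Schwarz to remove only the single factor carrying $L_{\max}$ and then invoking the stronger trilinear bound \eqref{eq:L2 multiest.}, which keeps $b=1/2-\delta$ on all three remaining factors and gains $1/2-\delta$ derivatives on one of them. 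Finally, your one-line treatment of the $L^2_kL^1_{\tau}$ component of $Z^{-1/2}$ skips the case distinction on which $L_i$ is maximal: when $L_0$ is not the largest modulation one cannot simply Cauchy--Schwarz in $\tau_0$, and the paper needs the Fubini/Cauchy--Schwarz swap in \eqref{eq:step of ii claim} to move the integration onto the variable carrying the large modulation.
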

A part of proof of the estimate \eqref{eq:freq supp nonresonant est.} relies on 
\begin{equation}\label{eq:L2 multiest.}
\left\|uvw\right\|_{L^2_{x,t}} \lesssim \left\|u\right\|_{X^{0,1/2-\delta}}\left\|v\right\|_{X^{0,1/2-\delta}}\left\|w\right\|_{X^{1/2-\delta,1/2-\delta}},
\end{equation}
for some small $0<\delta\ll 1$. For the proof, see Section 7 in \cite{Colliander:2004gc}.

\begin{proof}
By symmetry, we assume that $i=1$. We first prove the $X^{-1/2,-1/2}$ part, i.e.,
\begin{align*}
\left\|P_{N_0}F_{nr}\left(P_{N_1}u,P_{N_2}v,P_{N_3}w\right)\right\|_{X^{-1/2,-1/2}} \lesssim \left(\frac{N_1}{{n_{1}}}\right)^{\sigma} {n_{3}}^{-\sigma} \left\|u\right\|_{X^{-1/2,1/2}}\left\|v\right\|_{X^{-1/2,1/2}}\left\|w\right\|_{X^{-1/2,1/2}}.
\end{align*}
Without loss of generality, we may assume that all $u_i$ are nonnegative. By duality, it is equivalent to 
\begin{equation}\label{eq:claim of Xsb}
\begin{aligned}
&\left|\iint u_0F_{nr}\left(u_1u_2u_3\right) dxdt\right| \\
&\lesssim \left(\frac{N_1}{{n_{1}}}\right)^{\sigma} {n_{3}}^{-\sigma} \left\|u_0\right\|_{X^{1/2,1/2}}\left\|u_1\right\|_{X^{-1/2,1/2}}\left\|u_2\right\|_{X^{-1/2,1/2}}\left\|U_3\right\|_{X^{1/2,1/2}},
\end{aligned}
\end{equation}
where $U_3 = \partial_x^{-1}u_3$ and $u_i$ has Fourier support on the region $\left|k_i\right| \sim N_i$. The right hand side of (\ref{eq:claim of Xsb}) is comparable to 
\begin{equation}\label{eq:RHS of claim of Xsb}
\left(\frac{N_1}{{n_{1}}}\right)^{\sigma}{n_{3}}^{-\sigma} \frac{\left|N_0N_3\right|^{1/2}}{\left|N_1N_2\right|^{1/2}} \left\|u_0\right\|_{X^{0,1/2}}\left\|u_1\right\|_{X^{0,1/2}}\left\|u_2\right\|_{X^{0,1/2}}\left\|U_3\right\|_{X^{0,1/2}}.
\end{equation}
\begin{lem}\label{lem:est. of coeff}
In the same notation, we have
\begin{equation}\label{eq:est. of coefficient}
\left(\frac{N_1}{{n_{1}}}\right)^{\sigma} \frac{\left|N_0N_3\right|^{1/2}}{\left|N_1N_2\right|^{1/2}} \ge \frac{{n_{3}}^{1/2}{n_{4}}^{1/2}}{{n_{1}}}.
\end{equation}
\end{lem}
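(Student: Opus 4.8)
The plan is to prove the inequality \eqref{eq:est. of coefficient} by a straightforward case analysis based on which of the four frequencies $k_0,k_1,k_2,k_3$ turns out to be $n_3$ (the third largest in absolute value), recalling that by hypothesis $i=1$, so $N_1 \sim |k_1|$, and that $|n_1| \ge |n_2| \ge |n_3| \ge |n_4|$ with $\{n_1,\dots,n_4\}=\{k_0,k_1,k_2,k_3\}$. Rewriting the claim, it suffices to show
\begin{equation*}
\left(\frac{N_1}{|n_1|}\right)^{\sigma} \frac{|N_0 N_3|^{1/2}}{|N_1 N_2|^{1/2}} \ge \frac{|n_3|^{1/2}|n_4|^{1/2}}{|n_1|}.
\end{equation*}
The key observation is that $|N_0 N_1 N_2 N_3| = |n_1 n_2 n_3 n_4|$, so $\frac{|N_0 N_3|^{1/2}}{|N_1 N_2|^{1/2}} = \frac{|n_1 n_2 n_3 n_4|^{1/2}}{|N_1 N_2|}$, and hence the bound is equivalent to
\begin{equation*}
\left(\frac{N_1}{|n_1|}\right)^{\sigma} |n_1|^{1/2} |n_2|^{1/2} \ge |N_1 N_2|,
\end{equation*}
i.e. $(N_1/|n_1|)^{\sigma} \ge \frac{|N_1 N_2|}{|n_1|^{1/2}|n_2|^{1/2}}$. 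Since $N_1 = |n_j|$ for some $j$, we always have $N_1 \le |n_1|$, so the left side is $\le 1$; thus the content of the lemma is that the right side is also $\le (N_1/|n_1|)^{\sigma}$, which will require separating the case $N_1 \sim n_1$ from the cases where $N_1$ is a strictly smaller frequency.

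First I would dispose of the easy case where $N_1 \sim |n_1|$: then $(N_1/|n_1|)^{\sigma} \sim 1$, and since $N_2 \le |n_1|$ while $|n_2| \ge |n_3| \ge \min(N_1,N_2)$ one checks directly that $|N_1 N_2| \lesssim |n_1|^{1/2}|n_2|^{1/2}$ fails in general only if $N_1$ could exceed $|n_2|$ — but $N_1 \sim |n_1|$ forces the other three frequencies, in particular $N_0, N_2, N_3$, to include $n_2, n_3, n_4$, so $N_2 \le |n_2|$ and the bound holds. Then I would turn to the remaining cases, where $N_1 \le |n_2|$ (so $N_1 \in \{|n_2|, |n_3|, |n_4|\}$) and $|n_1|$ is one of $N_0, N_2, N_3$. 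In each such subcase one has $|n_1|^{1/2}|n_2|^{1/2} \ge N_1^{1/2}|n_1|^{1/2}$ and it remains to control $N_2$: since $N_2$ is one of the four frequencies and at most two of them can exceed $|n_2|$ in absolute value (namely possibly $n_1$), one gets $N_2 \lesssim |n_1|$, giving $|N_1 N_2| \lesssim N_1 |n_1| = N_1^{1/2}|n_1|^{1/2} \cdot (N_1/|n_1|)^{1/2}\cdot |n_1| \cdot |n_1|^{-1/2}$ — I would track the exponents carefully here — and absorb the deficit into the factor $(N_1/|n_1|)^{\sigma}$ for $\sigma$ small enough, using that all frequencies are $\gtrsim 1$ and that the truncation localizes everything to $[-N,N]$ so only finitely many dyadic configurations occur (which also lets one replace $\lesssim$ by $\ge$ after adjusting constants, or rather absorb constants into the definition of $\sigma$ and the implicit bump-function support).

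The main obstacle I anticipate is bookkeeping rather than any genuine difficulty: correctly enumerating which frequency plays the role of $n_1$ versus $N_1$, and verifying that in the genuinely "unbalanced" case — where $N_1$ is small, say $N_1 \sim |n_4|$, and both $N_0$ and one of $N_2, N_3$ are comparable to $|n_1|$ — the product $|N_1 N_2|$ does not overwhelm $|n_1|^{1/2}|n_2|^{1/2}$. Here one must use that $N_2 \ne n_1$ in at least one such configuration, or else that $N_2 \sim |n_1|$ is compensated because then $|n_2| \sim |n_1|$ as well (two large frequencies), making $|n_1|^{1/2}|n_2|^{1/2} \sim |n_1|$ and leaving only $N_1/|n_1| \le 1$ to beat, which is immediate. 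Once all subcases are arranged this way, the inequality \eqref{eq:est. of coefficient} follows, and I would present it as a short table of cases rather than a narrative argument.
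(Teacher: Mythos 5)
There is an algebra slip at the very first step of your reduction, and it is fatal to everything that follows. The identity $\frac{|N_0N_3|^{1/2}}{|N_1N_2|^{1/2}} = \frac{|n_1n_2n_3n_4|^{1/2}}{|N_1N_2|}$ is fine, but when you clear denominators in
\begin{equation*}
\left(\frac{N_1}{n_1}\right)^{\sigma} \frac{\left|n_1n_2n_3n_4\right|^{1/2}}{\left|N_1N_2\right|} \ge \frac{n_3^{1/2}n_4^{1/2}}{n_1}
\end{equation*}
you must multiply by the $n_1$ sitting in the denominator of the right-hand side as well as by $|N_1N_2|/(n_3n_4)^{1/2}$. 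The correct equivalent form is $\left(\frac{N_1}{n_1}\right)^{\sigma} n_1^{3/2}\, n_2^{1/2} \ge |N_1N_2|$, not $\left(\frac{N_1}{n_1}\right)^{\sigma} n_1^{1/2} n_2^{1/2} \ge |N_1N_2|$ as you wrote. The version you are trying to prove is simply false: take all four frequencies comparable to a large $n$, so that $N_1\sim N_2\sim n_1\sim n_2\sim n$; then your left side is $\sim n$ while your right side is $\sim n^2$. This is why your ``easy case'' $N_1\sim n_1$ already forces you to assert $|N_1N_2|\lesssim n_1^{1/2}n_2^{1/2}$, i.e.\ a product of two frequencies bounded by the square root of a product of two frequencies, which cannot hold for large frequencies. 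No amount of case bookkeeping will close an argument whose target inequality is false, so the gap is not the anticipated ``bookkeeping'' but the reduction itself.

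Once the exponent is restored, no case analysis is needed at all, and this is essentially what the paper does: rewrite \eqref{eq:est. of coefficient} as
\begin{equation*}
\frac{N_1^{1/2-\sigma}}{n_1^{1/2-\sigma}}\cdot\frac{N_2^{1/2}}{n_1^{1/2}}\cdot\frac{n_3^{1/2}n_4^{1/2}}{\left|N_0N_3\right|^{1/2}} \le 1
\end{equation*}
and observe that each factor is at most $1$: $N_1\le n_1$ and $N_2\le n_1$ because $n_1$ is the largest of the four frequencies, and $n_3n_4\le N_0N_3$ because the product of the two smallest frequencies is at most the product of any two of them. Your observation $|N_0N_1N_2N_3|=|n_1n_2n_3n_4|$ can be made to work too, but only against the corrected target $\left(\frac{N_1}{n_1}\right)^{\sigma} n_1^{3/2}n_2^{1/2}\ge |N_1N_2|$; I suggest redoing the algebra and checking that corrected inequality directly (it follows from $N_1\le n_1$, $N_2\le n_1$, and the fact that $N_1=N_2=n_1$ forces $n_2=n_1$).
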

\begin{proof}
If $N_1 \sim {n_{1}}$, then we can easily obtain (\ref{eq:est. of coefficient}). Hence we may assume that $N_1 \ll {n_{1}}$. We rewrite (\ref{eq:est. of coefficient}) as 
\begin{equation*}
 \frac{{n_{3}}^{1/2}{n_{4}}^{1/2}}{{n_{1}}} \left(\frac{{n_{1}}}{N_1}\right)^{\sigma} \frac{\left|N_1N_2\right|^{1/2}}{\left|N_0N_3\right|^{1/2}} \le 1.
\end{equation*}
In other words,
\begin{equation*}
\frac{\left|N_1\right|^{1/2-\sigma}}{{n_{1}}^{1/2-\sigma}} \frac{\left|N_2\right|^{1/2}}{{n_{1}}^{1/2}} \frac{{n_{3}}^{1/2}{n_{4}}^{1/2}}{\left|N_0N_3\right|^{1/2}}  \le 1.
\end{equation*}
Each term of the left hand side is smaller than 1, and so we are done.
\end{proof}

From Lemma \ref{lem:est. of coeff}, (\ref{eq:RHS of claim of Xsb}) is bounded below by
\begin{equation}\label{eq:below bound}
\frac{{n_{3}}^{1/2-\sigma}{n_{4}}^{1/2}}{{n_{1}}} \left\|u_0\right\|_{X^{0,1/2}}\left\|u_1\right\|_{X^{0,1/2}}\left\|u_2\right\|_{X^{0,1/2}}\left\|U_3\right\|_{X^{0,1/2}}.
\end{equation}
By a resonance identity,
\begin{equation*}
\sum_{i=0,1,2,3} \left(\tau_i-k^3_i\right) = - \sum_{i=0,1,2,3} k_i^3 = 3\left(k_1+k_2\right)\left(k_2+k_3\right)\left(k_3+k_1\right),
\end{equation*}
and thus,
\begin{equation*}\label{eq:freq identity}
\begin{split}
\sup_{i=0,1,2,3} L_i &\gtrsim \left|k_1+k_2\right|\left|k_2+k_3\right|\left|k_3+k_1\right|,
\end{split}
\end{equation*}
for $L_i = \left<\tau_i-k_i^3\right>$. Due to the symmetry of the functions in (\ref{eq:below bound}), we only consider  that $\displaystyle L_0  = \sup_{i=0,1,2,3} L_i$, and by Lemma 4.4 of \cite{Colliander:2005vv}, we have
\begin{equation*}
L_0 \gtrsim {n_{1}}^2{n_{4}}^{-1}.
\end{equation*}
Therefore, to prove \eqref{eq:claim of Xsb}, it suffices to show that
\begin{equation*}
\left|\sum_{|\bf k|  \in \mathcal{NR} } \int {n_{3}}^{-1/2+\sigma}L_0^{1/2} \tilde u_0 \tilde u_1 \tilde u_2 \tilde U_3 d\Gamma\right| \lesssim \left\|u_0\right\|_{X^{0,1/2}}\left\|u_1\right\|_{X^{0,1/2}}\left\|u_2\right\|_{X^{0,1/2}}\left\|U_3\right\|_{X^{0,1/2}}.
\end{equation*}
Here, as $\int \cdots \, d\Gamma $ is the integral taken over the set $\{ \tau_0 + \tau_1 + \tau_2 +\tau_3 =0\}$ before, and we denote $|{\bf k}| =(k_0,k_1,k_2,k_3) $ and $ \mathcal{NR}= \{(k_0,k_1,k_2,k_3) :  k_0+k_1+k_2+k_3=0, \,(k_1+k_2)(k_2+k_3)(k_3+k_1) \not =0 \}  $. \\
At least one of $k_1$, $k_2$ and $k_3$ is $\MC{O}\left(n_3\right)$. By symmetry, let us suppose that it is $k_3$. Then it is enough to show that
\begin{equation}\label{X part}
\left|\iint u_0 u_1u_2U_3 dxdt\right| \lesssim \left\|u_0\right\|_{L^2_{x,t}}\left\|u_1\right\|_{X^{0,1/2}}\left\|u_2\right\|_{X^{0,1/2}}\left\|U_3\right\|_{X^{1/2-\sigma,1/2}}.
\end{equation}
From the Cauchy-Schwarz inequality and (\ref{eq:L2 multiest.}), we can get \eqref{X part} for sufficiently small $\sigma$.\\
\\
Next, we prove the $L_k^2 L_{\tau}^1$ part. From the H\"older inequality, $X^{-1/2,-1/2}$ part and the interpolation, it is enough to prove the estimate
\begin{equation*}
\left\|\sum_{|\bf k|  \in \mathcal{NR} } \int \frac{1}{\left|k_0\right|^{1/2}L_0^{1-\delta}}\tilde u_1\tilde u_2\tilde U_3d\Gamma\right\|_{L^2_{k_0}L^1_{\tau_0}} \lesssim \left\|u_1\right\|_{X^{-1/2,1/2}}\left\|u_2\right\|_{X^{-1/2,1/2}}\left\|U_3\right\|_{X^{1/2,1/2}},
\end{equation*}
where $U_3 = \partial_x^{-1} u_3$ and some positive constant $\delta \ll 1$. In other words, we will show that
\begin{equation}\label{eq:claim of L2L1}
\left\|\sum_{|\bf k|  \in \mathcal{NR} } \int \frac{\left|k_1k_2\right|^{1/2}}{\left|k_0k_3\right|^{1/2}L_0^{1-\delta}L^{1/2}_1L^{1/2}_2L^{1/2}_3}\tilde u_1\tilde u_2\tilde U_3d\Gamma\right\|_{L^2_{k_0}L^1_{\tau_0}} \lesssim \left\|u_1\right\|_{L^2_{x,t}}\left\|u_2\right\|_{L^2_{x,t}}\left\|U_3\right\|_{L^2_{x,t}}.
\end{equation}
From the H\"older inequality, the left hand side of (\ref{eq:claim of L2L1}) is bounded by
\begin{equation}\label{eq:i case after holder}
\left\|\sum_{|\bf k|  \in \mathcal{NR} } \int \frac{\left|k_1k_2\right|^{1/2}}{\left|k_0k_3\right|^{1/2}L_0^{1/2-2\delta}L^{1/2}_1L^{1/2}_2L^{1/2}_3}\tilde u_1\tilde u_2\tilde U_3d\Gamma\right\|_{L^2_{k_0}L^2_{\tau_0}}.
\end{equation}
Similarly to $X^{-1/2,-1/2}$ case, we  have $\displaystyle \sup_{i=0,1,2,3}L_i (=:L_s) \gtrsim {n_{1}}^2{n_{4}}^{-1}$, and so
\begin{equation}\label{eq:est of freq}
\frac{\left|k_1k_2\right|^{1/2}}{\left|k_0k_3\right|^{1/2}L^{1/2}_s} \lesssim \frac{{n_{1}}}{{k_{0}}^{1/2}{k_{3}}^{1/2}}\cdot \frac{{n_{4}}^{1/2}}{{n_{1}}} \le \frac{1}{{k_{3}}^{1/2}}.
\end{equation}
We first consider $\displaystyle  \sup_{i=1,2,3}L_i= L_1$. By combining (\ref{eq:i case after holder}) and (\ref{eq:est of freq}), it suffices to show that
\begin{equation}\label{eq:i case claim}
\left\|\sum_{|\bf k|  \in \mathcal{NR} } \int \frac{1}{{k_{3}}^{1/2}L_0^{1/2-2\delta}L^{1/2}_2L^{1/2}_3}\tilde u_1\tilde u_2\tilde U_3d\Gamma\right\|_{L^2_{k_0}L^2_{\tau_0}} \lesssim \left\|u_1\right\|_{L^2_{x,t}}\left\|u_2\right\|_{L^2_{x,t}}\left\|U_3\right\|_{L^2_{x,t}}. 
\end{equation}
Then by duality, it suffices to prove
\begin{equation}\label{the desire}
\left|\iint u_0u_1u_2 U_3dxdt\right| \lesssim \left\|u_0\right\|_{X^{0,1/2-2\delta}}\left\|u_1\right\|_{L^2_{x,t}}\left\|u_2\right\|_{X^{0,1/2}}\left\|U_3\right\|_{X^{1/2,1/2}}.
\end{equation}
We can obtain \eqref{the desire} by the Cauchy-Schwarz inequality and (\ref{eq:L2 multiest.}) for small enough $\delta > 0$. By symmetry, $\displaystyle \sup_{i=0,1,2,3}L_i = L_2$ or $L_3$ cases are proved as well.\\
\\
Finally, we assume that $\displaystyle \sup_{i=0,1,2,3}L_i  = L_0 \gtrsim {n_{1}}^2{n_{4}}^{-1} $. From \eqref{eq:est of freq}, the left hand side of (\ref{eq:claim of L2L1}) is bounded by 
\begin{equation}\label{eq:ii claim}
\left\|\sum_{|\bf k|  \in \mathcal{NR} } \int \frac{1}{{n_{3}}^{1/2}L_0^{1/2-\delta}L^{1/2}_1L^{1/2}_2L^{1/2}_3}\tilde u_1 \tilde u_2 \tilde U_3d\Gamma\right\|_{L^2_{k_0}L^1_{\tau_0}}.
\end{equation}
From the assumption $L_0 \gtrsim L_1$, the Fubini theorem and the Cauchy-Schwarz inequality,
\begin{equation}\label{eq:step of ii claim}
\begin{aligned}
\eqref{eq:ii claim} &\le \left\|\sum_{|\bf k|  \in \mathcal{NR} } \int \frac{1}{{n_{3}}^{1/2}  L_1^{1-\delta} L_2^{1/2}L_3^{1/2}}\tilde u_1 \tilde u_2 \tilde U_3d\Gamma\right\|_{L^2_{k_0}L^1_{\tau_1}} \\
&\lesssim \left\|\sum_{|\bf k|  \in \mathcal{NR} } \int \frac{1}{{n_{3}}^{1/2}  L_1^{1/2-2\delta} L_2^{1/2}L_3^{1/2}}\tilde u_1 \tilde u_2 \tilde U_3d\Gamma\right\|_{L^2_{k_0}L^2_{\tau_1}} .
\end{aligned}
\end{equation}
The last term of \eqref{eq:step of ii claim} is similar to the left hand side of (\ref{eq:i case claim}), so we finish the proof.
\end{proof}
\begin{proof}[Proof of Lemma~\ref{lem:tri est.}]
It is obtained from the combination of Lemma \ref{lem:tri resonant est} and \ref{lem:local tri. est.} with summation with respect to each of dyadic frequency supports. More precisely, we observe  
\begin{equation*}
\left(uv- P_0\left(uv\right)\right)\partial_x^{-1}w = F_{r}\left(u,v,w\right) + F_{r}\left(v,u,w\right) + F_{nr}\left(u,v,w\right),
\end{equation*}
and use the fact that left hand side of (\ref{eq:freq supp nonresonant est.}) vanishes unless $n_1 \sim n_2$ when we sum up with dyadic numbers. 
\end{proof}
\begin{rmk}\label{rmk:rescaling est.}
In the proof of main theorems, we will use a rescaling argument. The bilinear and trilinear estimates obtained above can be easily restated with a rescaling parameter. We record facts here for convenience of readers. See more details in \cite{Bourgain:1993cl}, \cite{Colliander:2003td}, \cite{Colliander:2004gc} and \cite{university2007well}.
We let $\alpha \MB{T} = [0, 2 \pi \alpha)$ be the spatial domain. Then implicit constants of \eqref{eq:X^s,1/2 est.} and \eqref{eq:L2 multiest.} depend on $\alpha$. More precisely, for $2\pi \alpha $-periodic function $ u$, we define
\begin{equation}\label{eq:den of rescaled H^s norm }
\left\|u\right\|_{H^s(\alpha \MB{T})} := \frac{1}{\left(2\pi \alpha\right)^{1/2}} \left(\sum_{k \in \MB{Z}/{\alpha}} \left<k\right>^{2s}\left|\hat u\right|^2\right)^{1/2}, \hat u(k) = \int_0^{2\pi \alpha} e^{-ikx} u(x) \,dx 
\end{equation} 
and 
\begin{equation*}
\left\|u\right\|_{X^{s,b}\left(\alpha \MB{T}\right)}:=\left\|\left<k\right>^s \left<\tau-k^3\right>^b \MC{F} \left(u\right) \left(k, \tau \right)\right\|_{L^2_k\left(\MB{Z}/\alpha\right) L^2_{\tau}}.
\end{equation*}
In addition, we can define $Y^s\left(\alpha \MB{T}\right)$ and $Z^s\left(\alpha \MB{T}\right)$ norm by the same method. 
From \cite{Colliander:2004gc}, we have
\begin{equation}\label{eq: rescaled X^s,b est.}
\left\|uv\right\|_{X^{-1/2,1/2}\left(\alpha \MB{T}\right)} \lesssim \alpha^{0+} \left\|u\right\|_{Y^{1/2}\left(\alpha \MB{T}\right)} \left\|v\right\|_{Y^{1/2}\left(\alpha \MB{T}\right)}.
\end{equation}
Moreover, the following estimates are well-known,
\begin{equation}\label{eq:stric est. of recaled}
\left\|u\right\|_{L^4_{x,t}\left(\alpha \MB{T}\right)} \lesssim C\left(\alpha\right)\left\|u\right\|_{X^{0,1/3}\left(\alpha \MB{T}\right)}
\end{equation}
and
\begin{equation}\label{eq:another stric est. of rescaled}
\left\|u\right\|_{L^{\infty}_{x,t}\left(\alpha \MB{T}\right)} \lesssim C\left(\alpha\right)\left\|u\right\|_{X^{\frac{1}{2}+,\frac{1}{2}+}\left(\alpha \MB{T}\right)},
\end{equation}
where implicit constants $C\left(\alpha\right)$ are decreasing functions of $\alpha$. In particular, we have $C\left(\alpha\right) \le C\left(1\right)$ for $\alpha \ge 1$.
From \eqref{eq:stric est. of recaled} and \eqref{eq:another stric est. of rescaled}, 
\begin{equation}\label{eq:tri est. of direct method}
\left\|uvw\right\|_{L^2_{x,t}\left(\alpha \MB{T}\right)} \lesssim C'\left(\alpha\right) \left\|u\right\|_{X^{0,1/3}\left(\alpha \MB{T}\right)}\left\|v\right\|_{X^{0,1/3}\left(\alpha \MB{T}\right)}\left\|w\right\|_{X^{\frac{1}{2}+,\frac{1}{2}+}\left(\alpha \MB{T}\right)},
\end{equation}
where implicit constant $C'\left(\alpha\right)$ is also a decreasing function of $\alpha$.
Moreover, by rescaling \eqref{eq:L2 multiest.}, we can obtain
\begin{equation}\label{eq:tri est. of rescaled}
\left\|uvw\right\|_{L^2_{x,t}\left(\alpha \MB{T}\right)} \lesssim \alpha^M \left\|u\right\|_{X^{0,1/2-\delta}\left(\alpha \MB{T}\right)}\left\|v\right\|_{X^{0,1/2-\delta}\left(\alpha \MB{T}\right)}\left\|w\right\|_{X^{1/2-\delta,1/2-\delta}\left(\alpha \MB{T}\right)},
\end{equation}
for some positive constant $M$. Interpolating \eqref{eq:tri est. of direct method} and \eqref{eq:tri est. of rescaled} we can obtain the $\alpha$-rescaled estimate as follows
\begin{equation}\label{eq:rescaled L2 multi est}
\left\|uvw\right\|_{L^2_{x,t}\left(\alpha \MB{T}\right)} \lesssim \alpha^{0+} \left\|u\right\|_{X^{0,1/2-\sigma}\left(\alpha \MB{T}\right)}\left\|v\right\|_{X^{0,1/2-\sigma}\left(\alpha \MB{T}\right)}\left\|w\right\|_{X^{1/2-\sigma,1/2-\sigma}},
\end{equation}
for some small $0<\sigma \ll1$. Once we obtain \eqref{eq:den of rescaled H^s norm }-\eqref{eq:stric est. of recaled} and \eqref{eq:rescaled L2 multi est}, it is straightforward that one can replace \eqref{eq:X^s,1/2 est.} and \eqref{eq:L2 multiest.} in the proof, and so conclude \eqref{eq: bi est.} and \eqref{eq:tri est.} with the scaling parameter $\alpha$. 
\end{rmk}

\section{Differentiation by parts and Proof of Theorem \ref{thm:est. of different data} and \ref{thm:est. of truncated flow}}\label{Section: proof of different flow}
In this section, we use the method of the normal form to show Theorem \ref{thm:est. of different data} and \ref{thm:est. of truncated flow}. The normal form is performed via the differentiation by parts. Writing the system \eqref{eq:Coupled KdV system} in the interaction representation, we take the differentiation by parts to change the quadratic nonlinear terms into the bilinear nonlinear terms as the boundary term and the trilinear terms. This procedure replaces the use of the Miura transform in the proof of \cite{Colliander:2005vv}. 
\subsection{Differentiation by parts}
To simplify the notation, we denote
\begin{align*}
u\left(t,x\right)=S_{CKdV}\left(t\right) u_0, \quad v\left(t,x\right)=S_{CKdV}\left(t\right) v_0, \\
u^b\left(t,x\right)=S_{BKdV}\left(t\right) u_0, \quad v^b\left(t,x\right)=S_{BKdV}\left(t\right) v_0.
\end{align*}
Moreover, denote ${\bf u} = e^{t \partial_x^3}u$, ${\bf v}  = e^{t \partial_x^3}v$, ${\bf u}^b = e^{t \partial_x^3}u^b$ and ${\bf v}^b  = e^{t \partial_x^3}v^b$.
From (\ref{eq:Coupled KdV system}),
\begin{align*}
\partial_t{\bf u} = e^{t \partial_x^3}\left(\partial_x^3 u+\partial_t u\right) = - \frac{e^{t \partial_x^3}}{2}\partial_x\left(vv\right) = -\frac{e^{t\partial_x^3}}{2}  \partial_x\left(e^{-t\partial_x^3 }{\bf v} \cdot e^{-t\partial_x^3 }{\bf v}\right),  \\
\partial_t{\bf v} = e^{t \partial_x^3}\left(\partial_x^3 v+\partial_t v\right) = - e^{t \partial_x^3}\partial_x\left(uv\right) = -e^{t\partial_x^3} \partial_x\left(e^{-t\partial_x^3 }{\bf u} \cdot e^{-t\partial_x^3 }{\bf v}\right).
\end{align*}
We look at the system of the Fourier variables, still denoted as ${\bf u}_k$, ${\bf v}_k$ for $ k \in \mathbb Z^*$,\footnote{$k_i \not =0$ in the sequel is due to mean zero assumption.}
\begin{equation*}\label{eq: interaction repre}
\begin{aligned}
\partial_t {\bf u}_k = -\frac{i}{2} e^{-ik^3t} \sum_{k_1+k_2=k} k e^{ik_1^3 t} {\bf v}_{k_1} e^{ik_2^3 t} {\bf v}_{k_2} = -\frac{i}{2}\sum_{k_1+k_2=k} k e^{-i \phi\left({\rm k}\right) t} {\bf v}_{k_1} {\bf v}_{k_2}, \\
\partial_t{\bf v}_k = -i e^{-ik^3t} \sum_{k_1+k_2=k} k e^{ik_1^3 t} {\bf u}_{k_1} e^{ik_2^3 t} {\bf v}_{k_2} = -i\sum_{k_1+k_2=k} k e^{-i \phi\left({\rm k}\right) t} {\bf u}_{k_1} {\bf v}_{k_2},
\end{aligned}
\end{equation*}
where $\phi\left({\rm k}\right)= \phi\left(k_1,k_2\right)=3k_1k_2\left(k_1+k_2\right)$. Taking the differentiation by parts, we write
\begin{equation}\label{eq:diff by parts original flow}
\begin{aligned}
\partial_t{\bf u}_k = &-\frac{i}{2} \left[\partial_t\left\{\sum_{k_1+k_2=k}k \frac{e^{-i \phi\left({\rm k}\right)t}}{-i \phi\left({\rm k}\right)} {\bf v}_{k_1} {\bf v}_{k_2}\right\} \right. \\ 
&\left. +2 \sum_{k_1+k_3=k}k \frac{e^{-i \phi\left(k_1,k_3\right)t}}{-i \phi\left(k_1,k_3\right)} {\bf v}_{k_3}\sum_{k_{11}+k_{12}=k_1}ik_1 e^{-i \phi\left(k_{11},k_{12}\right)t} {\bf u}_{k_{11}}{\bf v}_{k_{12}}\right] \\
=& \frac{1}{6}\left[\partial_t\left\{\sum_{k_1+k_2=k} \frac{e^{-i\phi\left({\rm k}\right)t}}{k_1k_2}{\bf v}_{k_1}{\bf v}_{k_2}\right\} +2i \sum_{\substack{k_1+k_2+k_3=k \\ k_1+k_2 \not = 0}} \frac{e^{-i\Phi\left({\rm k}\right)t}}{k_3}{\bf u}_{k_1}{\bf v}_{k_2}{\bf v}_{k_3}\right] \\
\partial_t{\bf v}_k = &- i\left[\partial_t\left\{\sum_{k_1+k_2=k}k \frac{e^{-i \phi\left({\rm k}\right)t}}{-i \phi\left({\rm k}\right)} {\bf u}_{k_1} {\bf v}_{k_2}\right\} \right. \\ 
&\left. + \sum_{k_1+k_3=k}k \frac{e^{-i \phi\left(k_1,k_3\right)t}}{-i \phi\left(k_1,k_3\right)} {\bf u}_{k_1}\sum_{k_{21}+k_{22}=k_3}ik_3 e^{-i \phi\left(k_{21},k_{22}\right)t} {\bf u}_{k_{21}}{\bf v}_{k_{22}}\right. \\
&\left. + \frac{1}{2}\sum_{k_1+k_3=k}k \frac{e^{-i \phi\left(k_1,k_3\right)t}}{-i \phi\left(k_1,k_3\right)} {\bf v}_{k_3}\sum_{k_{11}+k_{12}=k_1}ik_1 e^{-i \phi\left(k_{11},k_{12}\right)t} {\bf v}_{k_{11}}{\bf v}_{k_{12}}\right] \\
=& \frac{1}{3}\left[\partial_t\left\{\sum_{k_1+k_2=k} \frac{e^{-i\phi\left({\rm k}\right)t}}{k_1k_2}{\bf u}_{k_1}{\bf v}_{k_2}\right\} +\sum_{\substack{k_1+k_2+k_3=k \\ k_1+k_2 \not = 0}} \frac{ie^{-i \Phi\left({\rm k}\right)t}}{k_3}\left({\bf v}_{k_1}{\bf u}_{k_2}{\bf u}_{k_3}+\frac{1}{2}{\bf v}_{k_1}{\bf v}_{k_2}{\bf v}_{k_3}\right)\right], 
\end{aligned}
\end{equation}
where $\Phi\left({\rm k}\right)=\Phi\left(k_1,k_2,k_3\right)=3\left(k_1+k_2\right)\left(k_2+k_3\right)\left(k_3+k_1\right)$. Similarly, we write a system for (\ref{eq:smooth truncated system}),
\begin{equation}\label{eq:diff by parts truncated flow}
\begin{aligned}
\partial_t{\bf u}^b_k =& \frac{1}{6}\left[\partial_t\left\{\sum_{k_1+k_2=k} \frac{e^{-i\phi\left({\rm k}\right)t}}{k_1k_2}b\left(k\right){\bf v}^b_{k_1}{\bf v}^b_{k_2}\right\} +2i \sum_{\substack{k_1+k_2+k_3=k \\ k_1+k_2 \not = 0}} \frac{e^{-i\Phi\left({\rm k}\right)t}}{k_3}b\left(k\right)b\left(k_1+k_2\right){\bf u}^b_{k_1}{\bf v}^b_{k_2}{\bf v}^b_{k_3}\right] \\
\partial_t{\bf v}^b_k =& \frac{1}{3}\left[\partial_t\left\{\sum_{k_1+k_2=k} \frac{e^{-i\phi\left({\rm k}\right)t}}{k_1k_2}b\left(k\right){\bf v}^b_{k_1}{\bf v}^b_{k_2}\right\} \right.\\ 
&\left.+\sum_{\substack{k_1+k_2+k_3=k \\ k_1+k_2 \not = 0}} \frac{ie^{-i\Phi\left({\rm k}\right)t}}{k_3}b\left(k\right)b\left(k_1+k_2\right)\left({\bf v}^b_{k_1}{\bf u}^b_{k_2}{\bf u}^b_{k_3}+\frac{1}{2}{\bf v}^b_{k_1}{\bf v}^b_{k_2}{\bf v}^b_{k_3}\right)\right].
\end{aligned}
\end{equation}
Integrating (\ref{eq:diff by parts original flow}) and (\ref{eq:diff by parts truncated flow}) in time $t$, we have
\begin{equation}\label{eq:solution u}
\begin{split}
{\bf u}_k\left(t\right)=&{\bf u}_k\left(0\right)+\frac{1}{6}\left[\sum_{k_1+k_2=k}\frac{e^{-i\phi\left({\rm k}\right)t}}{k_1k_2}{\bf v}_{k_1}\left(t\right){\bf v}_{k_2}\left(t\right)-\sum_{k_1+k_2=k}\frac{1}{k_1k_2}{\bf v}_{k_1}\left(0\right){\bf v}_{k_2}\left(0\right) \right. \\
&\left.+ 2i \int_0^t \sum_{\substack{k_1+k_2+k_3=k \\ k_1+k_2 \not = 0}}\frac{e^{-i\Phi\left({\rm k}\right)s}}{k_3}{\bf u}_{k_1}{\bf v}_{k_2}{\bf v}_{k_3}ds\right] 
\end{split}
\end{equation}
\begin{equation}\label{eq:solution v}
\begin{split}
{\bf v}_k\left(t\right)=&{\bf v}_k\left(0\right)+\frac{1}{3}\left[\sum_{k_1+k_2=k}\frac{e^{-i\phi\left({\rm k}\right)t}}{k_1k_2}{\bf u}_{k_1}\left(t\right){\bf v}_{k_2}\left(t\right)-\sum_{k_1+k_2=k}\frac{1}{k_1k_2}{\bf u}_{k_1}\left(0\right){\bf v}_{k_2}\left(0\right) \right.\\
&\left.+ \int_0^t \sum_{\substack{k_1+k_2+k_3=k \\ k_1+k_2 \not = 0}}\frac{i e^{-i\Phi\left({\rm k}\right)s}}{k_3}\left({\bf v}_{k_1}{\bf u}_{k_2}{\bf u}_{k_3}+\frac{1}{2}{\bf v}_{k_1}{\bf v}_{k_2}{\bf v}_{k_3}\right)ds\right]
\end{split}
\end{equation}
\begin{equation}\label{eq:solution ub}
\begin{split}
{\bf u}_k^b\left(t\right)=&{\bf u}_k\left(0\right)+\frac{1}{6}\left[\sum_{k_1+k_2=k}\frac{e^{-i\phi\left({\rm k}\right)t}}{k_1k_2}b\left(k\right){\bf v}^b_{k_1}\left(t\right){\bf v}^b_{k_2}\left(t\right)-\sum_{k_1+k_2=k}\frac{1}{k_1k_2}b\left(k\right){\bf v}_{k_1}\left(0\right){\bf v}_{k_2}\left(0\right) \right. \\
&\left.+ 2i\int_0^t \sum_{\substack{k_1+k_2+k_3=k \\ k_1+k_2 \not = 0}}\frac{e^{-i\Phi\left({\rm k}\right)s}}{k_3}b\left(k\right)b\left(k_1+k_2\right){\bf u}^b_{k_1}{\bf v}^b_{k_2}{\bf v}^b_{k_3}ds\right] 
\end{split}
\end{equation}
\begin{equation}\label{eq:solution vb}
\begin{split}
{\bf v}_k^b\left(t\right)=&{\bf v}_k\left(0\right)+\frac{1}{3}\left[\sum_{k_1+k_2=k}\frac{e^{-i\phi\left({\rm k}\right)t}}{k_1k_2}b\left(k\right){\bf u}^b_{k_1}\left(t\right){\bf v}^b_{k_2}\left(t\right)-\sum_{k_1+k_2=k}\frac{1}{k_1k_2}b\left(k\right){\bf u}_{k_1}\left(0\right){\bf v}_{k_2}\left(0\right) \right.\\
&\left.+ \int_0^s \sum_{\substack{k_1+k_2+k_3=k \\ k_1+k_2 \not = 0}}\frac{ie^{-i\Phi\left({\rm k}\right)s}}{k_3}b\left(k\right)b\left(k_1+k_2\right)\left({\bf v}^b_{k_1}{\bf u}^b_{k_2}{\bf u}^b_{k_3}+\frac{1}{2}{\bf v}^b_{k_1}{\bf v}^b_{k_2}{\bf v}^b_{k_3}\right)ds\right].
\end{split}
\end{equation}
Transforming back (\ref{eq:solution u}), we write $u_k\left(t\right)$ as follows,
\begin{equation}\label{eq:trans back soln}
\begin{split}
u_k\left(t\right)=e^{ik^3t}u_{k}\left(0\right)+\frac{e^{ik^3t}}{6}\left[\sum_{k_1+k_2=k}\frac{e^{-i\phi\left({\rm k}\right)t}}{k_1k_2}e^{-it\left(k_1^3+k_2^3\right)}v_{k_1}\left(t\right)v_{k_2}\left(t\right)  -\sum_{k_1+k_2=k}\frac{v_{k_1}\left(0\right)v_{k_2}\left(0\right)}{k_1k_2} \right. \\
\left.+ 2i\int_0^t \sum_{\substack{k_1+k_2+k_3=k \\ k_1+k_2 \not = 0}}\frac{e^{-i\Phi\left({\rm k}\right)s}}{k_3}e^{-is\left(k_1^3+k_2^3+k_3^3\right)}u_{k_1}v_{k_2}v_{k_3}ds\right].
\end{split}
\end{equation}
In the same way, the solutions $v_k\left(t\right)$, $u^b_k\left(t\right)$ and $v^b_k\left(t\right)$ are denoted similarly. As shown in the theorems, we should investigate in detail the difference between solutions. Since $v$ and $v^b$ are handled similarly, we mainly consider the solutions $u$ and $u^b$. Note that `$k_1+k_2 \not = 0$' in (\ref{eq:trans back soln}) means `$-P_0\left(uv\right)\partial_x^{-1}v$' in the spatial domain. This represent why we require the trilinear form in Lemma \ref{lem:tri est.} in the approximation analysis.

\subsection{Proof of Theorem \ref{thm:est. of truncated flow}} \label{sec: pf of thm trun} $ $ \\
We prove the estimate \eqref{eq:est. of truncated flow} for $ u-u_b$. By the following argument, we can obtain \eqref{eq:est. of truncated flow} for $v-v_b$ as well. From \eqref{eq:space embedding} and \eqref{eq:solution u}-\eqref{eq:trans back soln}, the first term of the left hand side of (\ref{eq:est. of truncated flow}) is bounded by
\begin{equation}\label{eq:Full est.}
\begin{split}
&\left\|P_{\le N^{1/2}}\left(u-u^b\right)\right\|_{Y^{-1/2}}\\
& \lesssim \left\|P_{ \le N^{1/2}}\left[ \partial_x^{-1}v\partial_x^{-1}v-\partial_x^{-1}v^b\partial_x^{-1}v^b\right]\right\|_{Y^{-1/2}} \\
&+ \left\|P_{\le N^{1/2}} \left[\left(uv-P_0\left(uv\right)\right)\partial^{-1}_xv-\left(B\left(u^bv^b\right)-P_0\left(B\left(u^bv^b\right)\right)\right)\partial^{-1}_xv^b\right]\right\|_{Z^{-1/2}} \\
&=: \left\|B_2\left(v,v\right)\right\|_{Y^{-1/2}} +\left\|N_3\left(u,v,v\right)\right\|_{Z^{-1/2}}.
\end{split}
\end{equation}
We now use the bilinear and trilinear estimates obtained in Section 3. 
From the triangle inequality and (\ref{eq: bi est.}),
\begin{equation}\label{eq:B_2 est.}
\begin{split}
\left\|B_2\left(v,v\right)\right\|_{Y^{-1/2}}
&\le \left\|P_{ \le N^{1/2}}\left[\partial_x^{-1}v\partial_x^{-1}\left(v - v^b\right)\right]\right\|_{Y^{-1/2}} + \left\|P_{ \le N^{1/2}}\left[\partial_x^{-1}v^b\partial_x^{-1}\left(v - v^b\right)\right]\right\|_{Y^{-1/2}} \\
&\lesssim \left\|P_{ \le N^{1/2}}v\right\|_{Y^{-1/2}} \left\|P_{ \le N^{1/2}}\left(v - v^b\right)\right\|_{Y^{-1/2}} \\
&+ \left\|P_{ \le N^{1/2}}v^b\right\|_{Y^{-1/2}} \left\|P_{ \le N^{1/2}}\left(v - v^b \right)\right\|_{Y^{-1/2}} \\
&+ \text{(remainder terms)$_1$}.
\end{split}
\end{equation}
Here $\text{(remainder terms)$_1$}$ contain \emph{high-high} to \emph{low} frequency interactions. \\
\noindent The integral terms are also estimated by the triangle inequality, (\ref{eq:tri est.}) and $P_0B = P_0$,
\begin{equation}\label{eq:N_3 est.}
\begin{split}
\left\|N_3\left(u,v,v\right)\right\|_{Z^{-1/2}}
&\lesssim \left\|P_{ \le N^{1/2}}u\right\|_{Y^{-1/2}}\left\|P_{ \le N^{1/2}}v\right\|_{Y^{-1/2}} \left\|P_{ \le N^{1/2}}\left(v - v^b\right)\right\|_{Y^{-1/2}} \\
&+  \left\|P_{ \le N^{1/2}}u\right\|_{Y^{-1/2}}\left\|P_{ \le N^{1/2}}v^b\right\|_{Y^{-1/2}} \left\|P_{ \le N^{1/2}}\left(v - v^b\right)\right\|_{Y^{-1/2}} \\
&+ \left\|P_{ \le N^{1/2}}v^b\right\|_{Y^{-1/2}}\left\|P_{ \le N^{1/2}}v^b\right\|_{Y^{-1/2}} \left\|P_{ \le N^{1/2}}\left(u - u^b\right)\right\|_{Y^{-1/2}} \\
&+ \left\|P_{\le N^{1/2}}\MC{F}_x^{-1}\left[\sum_{\substack{k_1+k_2+k_3=k \\ k_1+k_2 \not = 0}}\frac{\left(1-b\left(k_1+k_2\right)\right)}{k_3}u^b_{k_1}v^b_{k_2}v^b_{k_3}\right]\right\|_{Z^{-1/2}} \\
&+ \text{(remainder terms)$_2$}.
\end{split}
\end{equation}
We take three steps. We first show that the remainder terms are $ { \scriptstyle\MC{O}}_N\left(1\right)$ using the bilinear and trilinear estimates that is obtained the last section. Next, we show that $Z^{-1/2}$-term of (\ref{eq:N_3 est.}) is ${ \scriptstyle\MC{O}}_N\left(1\right)$ as well. Lastly, we show that terms involving the difference are absorbed into the left hand side of \eqref{eq:Full est.} and $\left\|P_{\le N^{1/2}}\left(v-v^b\right)\right\|_{Y^{-1/2}}$. For this step, we use a rescaling argument to make the factor $\|P_{\le N^{1/2}} u \|_{Y^{-1/2}} $ small in a large domain.  \\
\\
{\bf Step 1}. \\ 
First, we handle \text{(remainder terms)$_1$} in the boundary terms. We take a dyadic decomposition and use Lemma \ref{lem:bi est.}, Lemma \ref{lem:tri resonant est} and Lemma \ref{lem:local tri. est.}. In view of \eqref{eq:Full est.}, \text{(remainder terms)$_1$} contains only \emph{high-high} to \emph{low} interactions. Namely, it is bounded by
\begin{equation*}
\left\|P_{\le N^{1/2}}F_2\left(v_{hi},v_{hi}\right)\right\|_{Y^{-1/2}}, 
\end{equation*}
where we denote $v_{low} = P_{\le N^{1/2}}v$ and $v_{hi} = \left(1-P_{\le N^{1/2}}\right)v$. Obviously, $v$ can be replaced to $u$, $u^b$ or $v^b$, but they are handled in the same way. From Lemma \ref{lem:bi est.} and the global well-posedness, since
\begin{equation*}
\left\|F_2\left(v,v\right)\right\|_{Y^{-1/2}} \lesssim \|v\|_{Y^{-1/2}} \|v\|_{Y^{-1/2}},
\end{equation*}
we have 
\begin{equation}\label{eq: result of bi est.}
\left\|P_{\le N^{1/2}}F_2\left(v_{hi},v_{hi}\right)\right\|_{Y^{-1/2}} \lesssim \|v_{hi}\|_{Y^{-1/2}} \|v_{hi}\|_{Y^{-1/2}}  \sim { \scriptstyle\MC{O}}_N\left(1\right).
\end{equation}
\\
Next, we control the integral terms. Similarly, (remainder terms)$_2$ also has the sum of the multilinear terms, but it has resonant form $P_{\le N^{1/2}} F_r \left(u,v,v\right)$ and nonresonant form $P_{\le N^{1/2}} F_{nr} \left(u,v,v\right)$. The resonant case can be controlled as the boundary terms did. More precisely, we write the $P_{\le N^{1/2}} F_{r}\left(u,v,v\right)$ as follows,
\begin{equation}\label{eq:truncated resonant}
\sum_{N_0,N_1,N_2,N_3} P_{N_0} P_{\le N^{1/2}} F_{r}\left(P_{N_1}u,P_{N_2}v,P_{N_3}v\right).
\end{equation}
Likewise, $u$ and $v$ in the sequel can be replaced by $u^b$ or $v^b$. As before, in view of \eqref{eq:N_3 est.}, (remainder terms)$_2$ does not contain trilinear terms of which all factors are from low frequency piece.
We thus have $n_1 > N^{1/2}$ and from \eqref{eq:resonant trilinear}, we can write the form \eqref{eq:truncated resonant} in (remainder terms)$_2$ as follows,
\begin{equation}\label{eq: low-hi-hi tri}
\sum_{N_0,N_2} P_{N_0} P_{\le N^{1/2 }}F_{r}\left(P_{N_0}u_{low}, P_{N_2}v_{hi}, P_{N_2} v_{hi}\right) = P_{\le N^{1/2}}F_r\left(u_{low}, v_{hi}, v_{hi}\right).
\end{equation}

Similarly to the boundary case, we have the following estimates by Lemma~\ref{lem:tri resonant est} and the global well-posedness,
\begin{equation*}
\left\|F_{r}\left(u_{low}, v, v\right)\right\|_{Z^{-1/2}} \lesssim 1,
\end{equation*}
and
\begin{equation}\label{eq: high freq resonant est.}
\left\|P_{\le N^{1/2}}F_r\left(u_{low}, v_{hi}, v_{hi}\right)\right\|_{Z^{-1/2}}\lesssim \|u\|_{Y^{-1/2}}\|v_{hi}\|_{Y^{-1/2}} \|v_{hi}\|_{Y^{-1/2}} \sim { \scriptstyle\MC{O}}_N\left(1\right).
\end{equation}
In other words, (remainder terms)$_2$ is bounded by ${ \scriptstyle\MC{O}}_N\left(1\right)$.\\
\\
The integral terms associated with nonresonant case require a bit more work since (\ref{eq:freq supp nonresonant est.}) has $N_i$ and $n_3$ as its coefficients. For the frequency interval $\left[N^{1/2},2N^{1/2}\right]$, we can divide this interval into $O(\left(N'\right)^{1/4})$ intervals uniformly, and then by the  orthogonality and the pigeon-hole principle, there exists at least one interval form of $[M,M+N^{1/4}]$ such that
\begin{equation*}
\left\|\left(P_{\le M+N^{1/4}}-P_{\le M}\right)u\right\|_{Y^{-1/2}} \lesssim N^{-\sigma}.
\end{equation*}
Fix this $M$, we can let $u_{low} = P_{\le M} u$, $u_{med} = \left(P_{\le M+N^{1/4}}-P_{\le M}\right)u$ and $u_{hi} = \left(1-P_{\le M+N^{1/4}}\right)u$. Then by Lemma \ref{lem:tri est.}, (remainder terms)$_2$ is bounded by $\MC{O}\left(N^{-\sigma}\right)$ if it has $u_{med}$ terms. Moreover, as before, the terms consisting of low frequency terms only are not included in (remainder terms)$_2$. Hence, we consider terms which have at least one $u_{hi}$. The worst case of this situation is $P_{\le M}F_{nr}\left(u_{hi}v_{low}v_{low}\right)$. As like the boundary terms, we split the solutions into the dyadic pieces,
\begin{equation*}
\sum_{N_0,N_1,N_2,N_3} P_{N_0} P_{\le M} F_{nr}\left(P_{N_1}u_{hi},P_{N_2}v_{low},P_{N_3}v_{low}\right).
\end{equation*}
Using frequency relation, we have $\left|k_1+k_2+k_3\right| =\left|k_0\right| \le M$, $\left|k_1\right| \ge M+N^{1/4}$ and $\left|k_2\right|, \left|k_3\right| \le M$. We thus have $n_3 \gtrsim N^{1/4}$, and from Lemma \ref{lem:local tri. est.}, 
\begin{equation}\label{eq: result of trun. trilinear}
\left\|\sum_{N_0,N_1,N_2,N_3} P_{N_0} P_{\le M} F_{nr}\left(P_{N_1}u_{hi},P_{N_2}v_{low},P_{N_3}v_{low}\right)\right\|_{Z^{-1/2}} \lesssim N^{-\sigma}.
\end{equation}
Therefore, (remainder terms)$_2$ is bounded by ${ \scriptstyle\MC{O}}_N\left(1\right)$.\\
\\
{\bf Step 2}. \\ 
The argument is based on the mean value theorem. Indeed, we use the smooth truncation instead of the sharp truncation to applying the mean value theorem. Our claim is
\begin{equation}\label{eq:goal of MVT}
\begin{aligned}
\left\|P_{\le N^{1/2}}\MC{F}^{-1}\left[\sum_{\substack{k_1+k_2+k_3=k \\ k_1+k_2 \not = 0}}\frac{\left(1-b\left(k_1+k_2\right)\right)}{k_3}v^b_{k_1}u^b_{k_2}u^b_{k_3}\right]\right\|_{Z^{-1/2}} \\
\le C\left(T,\left\|u^b_0\right\|_{H^{-1/2}_0},\left\|v^b_0\right\|_{H^{-1/2}_0}\right) { \scriptstyle\MC{O}}_N\left(1\right).
\end{aligned}
\end{equation}
To prove \eqref{eq:goal of MVT}, we inspect the support of indices. By the sharp truncation $P_{\le N^{1/2}}$ and the smooth truncation $b\left(k_1+k_2\right)$, we have $\left|k_1+k_2+k_3\right| =\left|k\right| \lesssim N^{1/2}$, $\left|k_1+k_2\right| \gtrsim N$ and then $\left|k_3\right| \gtrsim N$, and therefore, $n_1 \gtrsim N$. If $\left(k_1,k_2,k_3\right)$ is nonresonant, then  (\ref{eq:goal of MVT}) is directly obtained by Lemma \ref{lem:local tri. est.} and the global well-posedness. The remaining case is the resonant case. As mentioned above, we have $\left|k_1+k_2\right| \gtrsim N$ and $\left|k_3\right| \gtrsim N$, so there are only two cases. That is, $\left(k_1,k_2,k_3\right) = \left(k,-k_3,k_3\right) \text{or} \left(-k_3, k, k_3\right)$. For fixed time $T$, we have
\begin{equation*}
\left|\sum_{k \in \MB{Z}^*} \frac{1}{k} v_{-k}u_{k}\right|   \lesssim  \left\|v\right\|_{H^{-1/2}_0}\left\|u\right\|_{H^{-1/2}_0} \lesssim 1
\end{equation*}
by the Cauchy-Schwarz inequality and the global well-posedness, and then 
\begin{equation*}
\frac{1}{2} \left|\sum_{k \in \MB{Z}^*} \frac{v_{-k}u_{k} - v_{k}u_{-k}}{k} \right| =  \left|\sum_{k \in \MB{Z}^*} \frac{1}{k} v_{-k}u_{k}\right| \lesssim 1.
\end{equation*}
Hence, we can let $e_{k}$ such that 
\begin{equation}\label{eq:error sum}
e_{k} = \frac{1}{k} v_{-k}u_{k} - \frac{1}{k} v_{k}u_{-k} ~\text{and}~ \sum_{k \gtrsim N} e_{k} = { \scriptstyle\MC{O}}_N\left(1\right).
\end{equation}
We can now rewrite a piece of the resonant case as follows,
\begin{equation*}
\begin{aligned}
&\sum_{\left|k_3\right| \gtrsim N}1_{\left[-N^{1/2},N^{1/2}\right]}\left(k\right)\frac{1-b\left(k-k_3\right)}{k_3}v^b_{k}u^b_{-k_3}u^b_{k_3} \\ 
&+\sum_{\left|k_3\right| \gtrsim N}1_{\left[-N^{1/2},N^{1/2}\right]}\left(k\right)\frac{1-b\left(k-k_3\right)}{k_3}v^b_{-k_3}u^b_{k}u^b_{k_3}  \\
&=\sum_{k_3 \gtrsim N}1_{\left[-N^{1/2},N^{1/2}\right]}\left(k\right)\frac{b\left(k+k_3\right)-b\left(k-k_3\right)}{k_3}v^b_{k}u^b_{-k_3}u^b_{k_3} \\ 
&+\sum_{k_3 \gtrsim N}1_{\left[-N^{1/2},N^{1/2}\right]}\left(k\right)\left[\frac{1-b\left(k-k_3\right)}{k_3}v^b_{-k_3}u^b_{k}u^b_{k_3} - \frac{1-b\left(k+k_3\right)}{k_3}v^b_{k_3}u^b_{k}u^b_{-k_3}\right] \\
&=\sum_{k_3 \gtrsim N}1_{\left[-N^{1/2},N^{1/2}\right]}\left(k\right)\frac{b\left(k+k_3\right)-b\left(k-k_3\right)}{k_3}v^b_{k}u^b_{-k_3}u^b_{k_3} \\ 
&+\sum_{k_3 \gtrsim N}1_{\left[-N^{1/2},N^{1/2}\right]}\left(k\right)\left[\frac{b\left(k+k_3\right)-b\left(k-k_3\right)}{k_3}v^b_{k_3}u^b_{k}u^b_{-k_3} + \left(1-b\left(k-k_3\right)\right)e_{k_3}u_k^b\right].
\end{aligned}
\end{equation*}
By the fact that $b\left(k\right)$ is even and the mean value theorem, we have
\begin{equation}\label{eq:mean value thm of B(k)}
\left|b\left(k+k_3\right)-b\left(k-k_3\right)\right|= \left|b\left(k+k_3\right)-b\left(k_3-k\right)\right| = \MC{O}\left(\frac{\left|k\right|}{N}\right) = \MC{O}\left(N^{-\sigma}\right),
\end{equation}
for $\left|k\right| \lesssim N^{1/2}$. In addition, we have
\begin{equation}\label{eq:last bound1}
\left|\sum_{\left|k_3\right| \gtrsim N} \frac{1}{k_3} u_{-k_3}v_{k_3}\right| \le \left\|u\right\|_{H^{-1/2}_0}\left\|v\right\|_{H^{-1/2}_0},
\end{equation}
and 
\begin{equation}\label{eq:last bound2}
\left\|u\right\|_{L^{\infty}_tH^{-1/2}_x} \lesssim \left\|u\right\|_{Y^{-1/2}},
\end{equation}
by the Cauchy-Schwarz inequality and the global well-posedness. Thus, (\ref{eq:goal of MVT}) is proved by \eqref{eq:error sum}-\eqref{eq:last bound2}. From (\ref{eq:space embedding}), we conclude the $Z^{-1/2}$-term of (\ref{eq:N_3 est.}) is ${ \scriptstyle\MC{O}}_N\left(1\right)$. \\
\\
{\bf Step 3}. \\ 
To complete the proof of Theorem \ref{thm:est. of truncated flow}, we need to use a rescaling argument. Our claim is to show that \eqref{eq:est. of truncated flow} is true on the time interval $\left[0,T\right]$. In fact, this claim is equivalent to show that the $\alpha$-scaled problem with solution 
\begin{equation}\label{scaling}
u_\alpha\left(x,t\right) = \alpha^{-2}u\left(\frac{x}{\alpha},\frac{t}{\alpha^3}\right)
\end{equation}
is true on an interval $\left[0,\alpha^3T\right]$ and on domain $ \alpha\MB{T} $. Roughly speaking, if we can show that
$\left\|P_{ \le N^{1/2}}u_{\alpha}\right\|_{Y^{-1/2}}$, $\left\|P_{ \le N^{1/2}}v_{\alpha}\right\|_{Y^{-1/2}}$, $\left\|P_{ \le N^{1/2}}u^b_{\alpha}\right\|_{Y^{-1/2}}$ and $\left\|P_{ \le N^{1/2}}v^b_{\alpha}\right\|_{Y^{-1/2}}$ are sufficiently small in $\alpha \MB{T} $, then we are done by putting all previous step together.\\
\\ 
Although  the implicit constants depend on $\alpha$, we can obtain (\ref{eq:B_2 est.}) and (\ref{eq:N_3 est.}) on $\alpha \MB{T}$ by Remark \ref{rmk:rescaling est.}. More precisely,
by Step 1, Step 2 and Remark \ref{rmk:rescaling est.},
we have
\begin{equation}\label{eq: full est. rescaled}
\begin{split}
&\left\|P_{\le N^{1/2}}\left(u_{\alpha}-u_{\alpha}^b\right)\right\|_{Y^{-1/2}\left(\alpha \MB{T}\right)}\\
&\lesssim \alpha^{0+}\left[ \left\|P_{ \le N^{1/2}}v_{\alpha}\right\|_{Y^{-1/2}\left(\alpha \MB{T}\right)} \left\|P_{ \le N^{1/2}}\left(v_{\alpha} - v_{\alpha}^b\right)\right\|_{Y^{-1/2}\left(\alpha \MB{T}\right)} \right.\\
&+ \left\|P_{ \le N^{1/2}}v_{\alpha}^b\right\|_{Y^{-1/2}\left(\alpha \MB{T}\right)} \left\|P_{ \le N^{1/2}}\left(v_{\alpha} - v_{\alpha}^b \right)\right\|_{Y^{-1/2}\left(\alpha \MB{T}\right)} \\
&+ \left\|P_{ \le N^{1/2}}u_{\alpha}\right\|_{Y^{-1/2}\left(\alpha \MB{T}\right)}\left\|P_{ \le N^{1/2}}v_{\alpha}\right\|_{Y^{-1/2}\left(\alpha \MB{T}\right)} \left\|P_{ \le N^{1/2}}\left(v_{\alpha} - v_{\alpha}^b\right)\right\|_{Y^{-1/2}\left(\alpha \MB{T}\right)} \\
&+  \left\|P_{ \le N^{1/2}}u_{\alpha}\right\|_{Y^{-1/2}\left(\alpha \MB{T}\right)}\left\|P_{ \le N^{1/2}}v_{\alpha}^b\right\|_{Y^{-1/2}\left(\alpha \MB{T}\right)} \left\|P_{ \le N^{1/2}}\left(v_{\alpha} - v_{\alpha}^b\right)\right\|_{Y^{-1/2}\left(\alpha \MB{T}\right)} \\
&+ \left.\left\|P_{ \le N^{1/2}}v_{\alpha}^b\right\|_{Y^{-1/2}\left(\alpha \MB{T}\right)}\left\|P_{ \le N^{1/2}}v_{\alpha}^b\right\|_{Y^{-1/2}\left(\alpha \MB{T}\right)} \left\|P_{ \le N^{1/2}}\left(u_{\alpha} - u_{\alpha}^b\right)\right\|_{Y^{-1/2}\left(\alpha \MB{T}\right)}\right] \\
&+{ \scriptstyle\MC{O}}_N\left(1\right).
\end{split}
\end{equation}

Similarly, we also have the estimate with respect to $\left\|P_{\le N^{1/2}}\left(v_{\alpha}-v_{\alpha}^b\right)\right\|_{Y^{-1/2}\left(\alpha \MB{T}\right)}$. 
Hence, if we can prove 
\begin{equation}\label{eq: smallness of rescaled}
\left\|P_{\le N^{1/2}}u^{(b)}_{\alpha}\left(t,x\right)\right\|_{Y^{-1/2}\left(\alpha \MB{T}\right)} + \left\|P_{\le N^{1/2}}v^{(b)}_{\alpha}\left(t,x\right)\right\|_{Y^{-1/2}\left(\alpha \MB{T}\right)} \ll 1,
\end{equation}
for $N > N_0(T, \varepsilon, \left\|u_{0}\right\|_{H^{-1/2}_0\left(\MB{T}\right)}, \left\|v_{0}\right\|_{H^{-1/2}_0\left(\MB{T}\right),})$, then all terms of the right hand side of \eqref{eq: full est. rescaled} except for ${\scriptstyle\MC{O}}_N\left(1\right)$ are absorbed in the left hand side. Now we show that \eqref{eq: smallness of rescaled}. By the global bound of solutions for $ t\in [0,T]$, and scaling back from \eqref{scaling},
\begin{equation*}
\left\|P_{\le N^{1/2}}u_{\alpha}\left(t,x\right)\right\|_{Y^{-1/2}\left(\alpha \MB{T}\right)} \lesssim \left\|P_{\le N^{1/2}}u_{{\alpha},0}\left(x\right)\right\|_{{H^{-1/2}_0}\left(\alpha \MB{T}\right)} = \alpha^{-1}\| P_{\le \alpha N^{1/2}} u_0 \|_{H_0^{-1/2}(\MB{T})}.
\end{equation*}
We first choose $\alpha$ sufficiently large such that terms involving the difference are absorbed to the left hand side. In the estimates of remainder terms in the previous step, $ {\scriptstyle\MC{O}}_N\left(1\right) $ depends on $\alpha $, too. But after fixing $\alpha $, we choose $N$ sufficiently large so that $ {\scriptstyle\MC{O}}_N\left(1\right) $ to be small. We handle $v_{\alpha}$, $u^b_{\alpha}$ and $v^b_{\alpha}$ similarly. Consequently, we finish the proof of Theorem~\ref{thm:est. of truncated flow} due to $Y^{s} \subset C_{t}H^s$.

\subsection{Proof of Theorem \ref{thm:est. of different data}} $ $\\
The argument is highly similar to \cite{Colliander:2005vv} to prove Theorem \ref{thm:est. of different data}. We mainly prove the following local-in time estimate. 
\begin{lem}\label{lem:local est. of different data}
Let $N' \ge 1$, $\left(u_0,v_0\right) \in {H^{{-\frac{1}{2}}}_0} \times  {H^{{-\frac{1}{2}}}_0}$ and $\left(u'_0,v'_0\right) \in {H^{{-\frac{1}{2}}}_0} \times  {H^{{-\frac{1}{2}}}_0}$ such that $P_{\le N'}\left(u_0,v_0\right) = P_{\le N'}\left(u'_0,v'_0\right)$. If $T'$ is sufficiently small depending on $\left\|u_0\right\|_{H^{-1/2}_0}$, $\left\|u'_0\right\|_{H^{-1/2}_0}$, $\left\|v_0\right\|_{H^{-1/2}_0}$, and $\left\|v'_0\right\|_{H^{-1/2}_0}$, then we have
\begin{align*}
&\sup_{\left|t\right|\le T'}\left\|P_{ \le N'-(N')^{1/2}}\left(S_{CKdV}\left(t\right) u_0-S_{CKdV} \left(t\right)u'_0\right)\right\|_{{H^{-1/2}_0}} \\
&+\sup_{\left|t\right|\le T'} \left\|P_{ \le N'-(N')^{1/2}}\left(S_{CKdV}\left(t\right)v_0-S_{CKdV}\left(t\right)v'_0\right)\right\|_{{H^{-1/2}_0}}\\
&\le C\left(\left\|u_0\right\|_{H^{-1/2}_0}, \left\|u'_0\right\|_{H^{-1/2}_0}, \left\|v_0\right\|_{H^{-1/2}_0}, \left\|v'_0\right\|_{H^{-1/2}_0}\right) { \scriptstyle\MC{O}}_{N'}\left(1\right).
\end{align*}
\end{lem}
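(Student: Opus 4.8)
\textbf{Proof proposal for Lemma~\ref{lem:local est. of different data}.}

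The plan is to run a contraction/difference estimate in $Y^{-1/2}(\MB{T})$ on a short time interval $[0,T']$ using the normal form representation~\eqref{eq:trans back soln}. First I would set up local-in-time bounds: by the standard $X^{s,b}$ theory (together with the multilinear estimates of Section~\ref{Sec3}), on a sufficiently short interval $[0,T']$ with $T'$ depending only on the data norms one has $\|u\|_{Y^{-1/2}}, \|u'\|_{Y^{-1/2}}, \|v\|_{Y^{-1/2}}, \|v'\|_{Y^{-1/2}} \lesssim 1$ for the two solution pairs. Writing $w = u-u'$, $z = v-v'$, I would subtract the Duhamel-normal-form formula~\eqref{eq:trans back soln} for $u_0$ from that for $u'_0$, and similarly for $v$. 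Because $P_{\le N'}(u_0,v_0) = P_{\le N'}(u'_0,v'_0)$, the linear (initial-data) contributions to $P_{\le N' - (N')^{1/2}}w$ and $P_{\le N' - (N')^{1/2}}z$ vanish, so only the nonlinear bilinear boundary terms and the trilinear integral terms survive.

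The core of the argument is then a frequency-localization bookkeeping exactly as in Step~1 of Section~\ref{sec: pf of thm trun}. Applying $P_{\le N' - (N')^{1/2}}$ to the difference of the bilinear boundary terms $F_2(v,v) - F_2(v',v')$ and to the difference of the trilinear terms, I would write each difference telescopically (e.g. $F_2(v,v) - F_2(v',v') = F_2(v,z) + F_2(z,v')$ and the analogous three-term splitting for the cubic pieces), and split each factor into a low piece $P_{\le (N')^{1/2}}$-ish part and a high remainder. The key observation is that $P_{\le N' - (N')^{1/2}}F_2(\cdot,\cdot)$ can only be nonzero when either (i) all inputs are at frequency $\lesssim N'$, in which case the difference factor $w$ or $z$ is itself frequency-localized to $\{|k| \le N'\}$ and, by hypothesis on the data together with the short-time bilinear/trilinear estimates, is $ {\scriptstyle\MC{O}}_{N'}(1)$ in $Y^{-1/2}$ on the initial interval and hence propagates as such; or (ii) there is a high-high-to-low interaction, which as in~\eqref{eq: result of bi est.} and~\eqref{eq: result of trun. trilinear} is bounded by the tail $\|(1-P_{\le (N')^{1/2}})(\cdot)\|_{Y^{-1/2}}^{2}$ (or its cubic analogue), which is $ {\scriptstyle\MC{O}}_{N'}(1)$ by dominated convergence. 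For the trilinear terms I would further decompose into the resonant part (handled by Lemma~\ref{lem:tri resonant est}) and nonresonant part (handled by Lemma~\ref{lem:local tri. est.}), using the gain $(N_i/n_1)^\sigma n_3^{-\sigma}$ in~\eqref{eq:freq supp nonresonant est.} to turn high-frequency interactions into powers of $(N')^{-\sigma}$. Collecting everything yields, on $[0,T']$,
\begin{equation*}
\|P_{\le N' - (N')^{1/2}}w\|_{Y^{-1/2}} + \|P_{\le N' - (N')^{1/2}}z\|_{Y^{-1/2}} \le C(\text{data})\left( {\scriptstyle\MC{O}}_{N'}(1) + \varepsilon_0 \big(\|w\|_{Y^{-1/2}} + \|z\|_{Y^{-1/2}}\big)\right),
\end{equation*}
where $\varepsilon_0 \lesssim T'^{\theta}$ for some $\theta>0$ absorbs the terms carrying a difference factor at genuinely low frequency; choosing $T'$ small relative to the data norms makes $\varepsilon_0 \le \tfrac12$ and lets those terms be absorbed into the left side, giving the claimed bound. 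Finally $Y^{-1/2}\subset C_t H^{-1/2}$ converts the $Y^{-1/2}$ estimate into the stated $\sup_{|t|\le T'}$ bound.

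I would handle the two components $u-u'$ and $v-v'$ together as a coupled system, since the $v$-equation feeds quadratically into the $u$-equation and vice versa; but this is a routine matrix-contraction adjustment and not a real difficulty. The main obstacle is the careful frequency bookkeeping in case (i) above: one must verify that, after applying $P_{\le N'-(N')^{1/2}}$ to the nonlinear differences, \emph{every} surviving term either carries a difference factor supported in $\{|k|\le N'\}$ (so that it vanishes at $t=0$ and stays $ {\scriptstyle\MC{O}}_{N'}(1)$ by the short-time smallness of the difference, which itself must be bootstrapped) or is a high-frequency tail term that is $ {\scriptstyle\MC{O}}_{N'}(1)$ unconditionally — the reason the loss $(N')^{1/2}$ appears in the output projection $P_{\le N'-(N')^{1/2}}$ is precisely to create the spectral gap that makes a low-frequency output force at least one high-frequency input in the mismatched terms, exactly as in the pigeonhole/orthogonality device used in Step~1. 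Tracking the dependence of all implicit constants on the four data norms, and on the rescaling parameter if one chooses to rescale here as well, is the bulk of the remaining work.
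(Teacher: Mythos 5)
Your overall skeleton (local $Y^{-1/2}$ bounds on a short interval, the normal-form/Duhamel representation, resonant--nonresonant splitting of the cubic terms, conclusion via $Y^{-1/2}\subset C_tH^{-1/2}$, coupled treatment of the two components) matches the paper, and you correctly identify the frequency bookkeeping as the crux. But the closing step as you have written it does not work, and the device that makes it work in the paper is absent. Your contraction inequality puts the \emph{unprojected} norms $\|w\|_{Y^{-1/2}}$, $\|z\|_{Y^{-1/2}}$ on the right-hand side; these are $\MC{O}(1)$, not small, because the data are allowed to differ arbitrarily above frequency $N'$, so they cannot be absorbed into the projected left-hand side no matter how small $T'$ is. If you instead try to keep only projected difference factors on the right, you run into the intermediate band: a term such as $P_{\le N'-(N')^{1/2}}F_2\bigl(v_{\mathrm{low}},\,P_{[N'-(N')^{1/2},\,N']}z\bigr)$ has a low-frequency output, a genuinely low companion factor (output at $k$, difference factor at $k_2$ forces only $|k_1|\ge |k_2|-|k|$, which can be $\MC{O}(1)$), and a difference factor at frequencies that are neither controlled by the left-hand side nor small a priori at positive times. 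Your ``spectral gap forces a high companion input'' observation is only valid when the difference factor sits above $N'$; it fails for the band $[N'-(N')^{1/2},N']$. Multiplying by $T'^{\theta}$ yields a fixed small constant, not the required $\scriptstyle\MC{O}_{N'}(1)$, and one cannot send $T'\to 0$ because the global statement is obtained by iterating the local lemma finitely many times.

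The paper closes exactly this hole with a pigeonhole selection that you allude to but do not actually use: it finds $M\in[N'-(N')^{1/2},N']$ such that the band $[M,M+(N')^{1/4}]$ is quiet for the \emph{individual} solutions $u,u',v,v'$ (estimate \eqref{eq:sub-small}), decomposes each solution into $\mathrm{low}/\mathrm{med}/\mathrm{hi}$ at $M$ and $M+(N')^{1/4}$, and shows that $P_{\le M}$ of the nonlinearity equals its all-low part plus $\scriptstyle\MC{O}_{N'}(1)$ errors: med inputs are small by the pigeonhole, and a hi input with output $\le M$ forces a second input above $(N')^{1/4}$, so $n_3\gtrsim (N')^{1/4}$ and Lemma~\ref{lem:local tri. est.} (together with Lemma~\ref{lem:tri resonant est} for the resonant piece) supplies the decay. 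The upshot is that ${\bf u}_{low}=P_{\le M}{\bf u}$ and ${\bf u}'_{low}$ satisfy the \emph{same closed} equation up to $\scriptstyle\MC{O}_{N'}(1)$ forcing with \emph{identical} initial data (since $M\le N'$), and the conclusion follows from the stability theory of the local well-posedness rather than from a direct difference contraction. To repair your argument you would need to import this quiet-band selection and the reduction to a closed low-frequency system; without it the estimate does not close.
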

Theorem \ref{thm:est. of different data} can be proved by using Lemma~\ref{lem:local est. of different data}.  Roughly speaking, we divide the given time interval $\left[-T,T\right]$ into intervals which has length $\left|T'\right|$, and use repeatedly Lemma~\ref{lem:local est. of different data}. For this argument, we refer to Section 5 in \cite{Colliander:2005vv} for details.
\\
\begin{proof}[Proof of Lemma \ref{lem:local est. of different data}]
We only consider the difference between $u$ and $u'$ as $v$ and $v'$ case is handled in the same way. From the local well-posedness for \eqref{eq:Coupled KdV system}, we have the local estimates
\begin{equation}\label{eq:small}
\left\|u\right\|_{Y^{-1/2}} + \left\|u'\right\|_{Y^{-1/2}} \lesssim C ~\text{and}~\left\|v\right\|_{Y^{-1/2}} + \left\|v'\right\|_{Y^{-1/2}} \lesssim C,
\end{equation}
by choosing the sufficiently small time $T'$ depending on the $H_0^{-1/2}$-norms of $u_0$, $u'_0$, $v_0$ and $v'_0$. We apply $P_{\le M}$ in \eqref{eq:diff by parts original flow} to get,
\begin{equation}\label{eq:low freq eq}
\begin{aligned}
\partial_t 1_{\left[-M,M\right]}\left(k\right) {\bf u}_k &=  \partial_t\left\{\frac{1_{\left[-M,M\right]}\left(k\right)}{6} \sum_{k_1+k_2=k} \frac{e^{-i\phi\left({\rm k}\right)t}}{k_1k_2}{\bf v}_{k_1}{\bf v}_{k_2}\right\}  \\
&+2i~\frac{1_{\left[-M,M\right]}\left(k\right)}{6} \sum_{\substack{k_1+k_2+k_3=k \\ k_1+k_2 \not = 0}}1_{[-M,M]}\left(k_1+k_2\right) \frac{e^{-i\Phi\left({\rm k}\right)t}}{k_3}{\bf u}_{k_1}{\bf v}_{k_2}{\bf v}_{k_3}.
\end{aligned}
\end{equation}
Taking linear propagator back, we can rewrite the right hand side of \eqref{eq:low freq eq} as follows:
\begin{equation*}
\begin{aligned}
&\partial_t \MC{F}_x\left[P_{\le M}B'_2\left({\bf v},{\bf v}\right)\right]+\MC{F}_x\left[ P_{\le M} N'_3\left({\bf u},{\bf v},{\bf v}\right)\right] \\
&=\partial_t\left\{\frac{1_{\left[-M,M\right]}\left(k\right)}{6} \sum_{k_1+k_2=k} \frac{e^{-i\phi\left({\rm k}\right)t}}{k_1k_2}e^{-it\left(k_1^3+k_2^3\right)} v_{k_1}v_{k_2}\right\}  \\
&+2i~\frac{1_{\left[-M,M\right]}\left(k\right)}{6} \sum_{\substack{k_1+k_2+k_3=k \\ k_1+k_2 \not = 0}}1_{[-M,M]}\left(k_1+k_2\right) \frac{e^{-i\Phi\left({\rm k}\right)t}}{k_3}e^{-it\left(k_1^3+k_2^3+k_3^3\right)}u_{k_1}v_{k_2}v_{k_3} \\
&=:\partial_t \MC{F}_x\left[P_{\le M}B'_2\left(v,v\right)\right]+ \MC{F}_x\left[P_{\le M} N'_3\left(u,v,v\right)\right].
\end{aligned}
\end{equation*}
The constant $M$ will be the low frequency cut-off and will be chosen later. In order to show $  \|P_{N'-(N')^{1/2}}u(t) -u'(t) \|_{Y^{-1/2}}$ to be small for a short time $T'$, we analyse the nonlinear terms and show that the contribution from high frequency pieces is small and so regarded as remainder terms in ${ \scriptstyle\MC{O}}_{N'}\left(1\right) $. In addition, for the contribution from all low frequency pieces, we use the local stability theory. \\
\\
We first consider the trilinear term $N'_3(u,v,v)$. This part is also similar to Step 1 of Subsection \ref{sec: pf of thm trun}. In order to control $N'_3(u,v,v)$, we define the $(\mbox{\emph{error terms}})_3$ which has the $Z^{-\frac{1}{2}}$-norm of $ { \scriptstyle\MC{O}}_{N'}\left(1\right)$. The solutions $u$, $u'$, $v$ and $v'$ are decomposed into the three pieces using the following argument. By the global well-posedness and the pigeon-hole principle, we may find an interval $\left[M,M+(N')^{1/4}\right] \subseteq [N'-\left(N'\right)^{\frac12},N']$\footnote{ The constant $M$ shall be different from that in Subsection \ref{sec: pf of thm trun}.} such that 
\begin{equation}\label{eq:sub-small}
\begin{split}
&\left\|(P_{\le M+\left(N'\right)^{1/4}}-P_{\le M})u\right\|_{Y^{-1/2}} + \left\|(P_{\le M+\left(N'\right)^{1/4}}-P_{\le M})u'\right\|_{Y^{-1/2}}  \\
&+\left\|(P_{\le M+\left(N'\right)^{1/4}}-P_{\le M})v\right\|_{Y^{-1/2}} + \left\|(P_{\le M+\left(N'\right)^{1/4}}-P_{\le M})v'\right\|_{Y^{-1/2}} \lesssim \left(N'\right)^{-\sigma}.
\end{split}
\end{equation}
We fix such $M$ with $ N'-N'^{1/2} \le M \le N'$ and decompose $u$ as
\begin{equation*}
u = u_{low} + u_{med}+u_{hi},
\end{equation*}
where 
\begin{equation*}
u_{low} := P_{\le M}u, \hspace{1em} u_{med} := (P_{\le M+\left(N'\right)^{1/4}}-P_{\le M})u, \hspace{1em} u_{hi}:= (1-P_{\le M+\left(N'\right)^{1/4}})u.
\end{equation*}
From \eqref{eq:small} and \eqref{eq:sub-small}, we have
\begin{equation}\label{eq:split}
\left\|u_{low}\right\|_{Y^{-1/2}},\left\|u_{hi}\right\|_{Y^{-1/2}} \le C \hspace{1em} \mbox{and} \hspace{1em} \left\|u_{med}\right\|_{Y^{-1/2}} \lesssim \left(N'\right)^{-\sigma}.
\end{equation}
We also do the same decomposition for $u'$, $v$ and $v'$, and obtain analogous estimates like \eqref{eq:split}. Moreover, we denote 
\begin{equation*}
P_{\le M}N'_3(u, v, v) = P_{\le M}N'_3(u_{low},v_{low}, v_{low}) + \text{(remainder terms)}_3.
\end{equation*}
First of all, from Lemma \ref{lem:tri resonant est} and \ref{lem:local tri. est.}, any term in $\text{(remainder terms)}_3$ involving $u_{med}$, $u'_{med}$, $v_{med}$ or $v'_{med}$ is $\MC{O}\left(\left(N'\right)^{-\sigma}\right)$ in $(\mbox{\emph{error terms}})_3$. We now consider terms which involve in $v_{hi}$. As before, $\text{(remainder terms)}_3$ is split into the resonant case and the nonresonant case. From \eqref{eq: low-hi-hi tri}, the typical term of resonant case is $P_{\le M}N'_3(u_{low}, v_{hi},  v_{hi})$ and therefore, we have 
\begin{equation*}
\left\|P_{\le M}N'_3(u_{low}, v_{hi},  v_{hi})\right\|_{Z^{-1/2}} \sim { \scriptstyle\MC{O}}_{N'}\left(1\right),
\end{equation*}
by \eqref{eq: high freq resonant est.}. For the nonresonant case, we estimate as Step 1 of Subsection \ref{sec: pf of thm trun}. Since we have $n_3 \gtrsim (N')^{1/4}$, Lemma \ref{lem:local tri. est.}, and the estimate of \eqref{eq: result of trun. trilinear} in Subsection~\ref{sec: pf of thm trun}, the nonresonant case of $\text{(remainder terms)}_3$ is bounded by $\MC{O}\left(\left(N'\right)^{-\sigma}\right)$. In other words, $P_{\le M}N'_3(u,v,v)$ can be written $P_{\le M}N'_3(u_{low}, v_{low},v_{low}) + (\mbox{\emph{error terms}})_3$ with $ \| (\mbox{\emph{error terms}})_3 \|_{Z^{-1/2}} =  {\scriptstyle\MC{O}}_{N'}\left(1\right) $. Due to $P_{\le M}e^{\pm t \partial^3_x} = e^{\pm t\partial^3_x}P_{\le M}$, we have
\begin{equation*}
P_{\le M}N'_3(u,v,v) = P_{\le M}N'_3({\bf u}_{low}, {\bf v}_{low}, {\bf v}_{low}) + (\mbox{\emph{error terms}})_3,
\end{equation*}
where ${\bf u}_{low} = P_{\le M}{\bf u}$ and ${\bf v}_{low} = P_{\le M}{\bf v}$.\\
\\
The bilinear term $B'_2(v,v)$ can be analysed in a similar way. Let $(\mbox{\emph{error terms}})_4$ be term which has the $Y^{-\frac{1}{2}}$-norm of ${ \scriptstyle\MC{O}}_{N'}\left(1\right)$. We choose the same constant $M$ in  $N_3'(u,v,v)$-term case and split the solution $v$ and $v'$ into as follows,
\begin{equation*}
v = v_{low} + v_{hi},
\end{equation*}
where 
\begin{equation*}
v_{low} = P_{ \le M}v  \hspace{1em} \text{and} \hspace{1em} v_{hi} = \left(1-P_{ \le M}\right)v.
\end{equation*}
We denote 
\begin{equation}\label{eq:truncated bilinear}
P_{\le M}B'_2(v, v) = P_{\le M}B'_2(v_{low}, v_{low}) + \text{(remainder terms)}_4.
\end{equation}
From \eqref{eq:truncated bilinear}, $\text{(remainder terms)}_4$ has $v_{hi}$ term only. Hence, $\text{(remainder terms)}_4$ is bounded by ${ \scriptstyle\MC{O}}_{N'}\left(1\right)$ from \eqref{eq: result of bi est.} and $M\in [N'-\left(N'\right)^{\frac12},N']$. Therefore, the bilinear term $B'_2(v, v)$ can be written $B'_2(v_{low}, v_{low}) + (\mbox{\emph{error terms}})_4$, and so 
\begin{equation*}
B'_2(v, v) = B'_2({\bf v}_{low}, {\bf v}_{low}) + (\mbox{\emph{error terms}})_4,
\end{equation*}
by $P_{\le M}e^{\pm t \partial^3_x} = e^{\pm t\partial^3_x}P_{\le M}$.
\\
\\
Consequently, ${\bf u}_{low}$ obeys the equation,
\begin{equation}\label{eq:low equation1}
\partial_t{\bf u}_{low} = \partial_t P_{\le M}B'_2({\bf v}_{low}, {\bf v}_{low})+P_{\le M}N'_3({\bf u}_{low},{\bf v}_{low},{\bf v}_{low}) + \sum_{i=3,4}(\mbox{\emph{error terms}})_{i}.
\end{equation} 
 In the same manner, the function ${\bf u}'_{low}$ obeys the equation
\begin{equation}\label{eq:low equation2}
\partial_t{\bf u}'_{low} = \partial_t P_{\le M}B'_2({\bf v}'_{low}, {\bf v}'_{low})+P_{\le M}N_3'({\bf u}'_{low},{\bf v}'_{low},{\bf v}'_{low}) + \sum_{i=3,4}(\mbox{\emph{error terms}})_{i}.
\end{equation}
From the local well-posedness for (\ref{eq:low equation1}) or \eqref{eq:low equation2}, transforming back, $u_{low}\left(0\right) = u'_{low}\left(0\right)$, and the fact that by the rescaling argument as in Subsection \ref{sec: pf of thm trun}, we may assume that the initial data are small in $Y^{-1/2}$, we have
\begin{equation*}
\left\|u_{low} - u'_{low}\right\|_{Y^{-1/2}} \lesssim  { \scriptstyle\MC{O}}_{N'}\left(1\right).
\end{equation*}
We also get 
\begin{equation*}
\left\|v_{low} - v'_{low}\right\|_{Y^{-1/2}} \lesssim  { \scriptstyle\MC{O}}_{N'}\left(1\right),
\end{equation*}
by the similar argument, and we thus finish the proof by $Y^{s} \subset C_{t}H^s$.
\end{proof}

\begin{rmk}
Although we provide the proof for \eqref{eq:Majda-Biello system}, the same proof works for a more general case \eqref{eq:Diagonalize Coupled KdV system} if there is a  global control of solutions on $C_tH_0^{-\frac{1}{2}}([0,T]\times\MB{T})$.
\end{rmk}

\bibliography{nonsqueezing_abbr}

\end{document}